\newtheorem{theorem}{Theorem}[section]
\newtheorem{corollary}[theorem]{Corollary}
\newtheorem{remark}[theorem]{Remark}
\numberwithin{equation}{section}
\newtheorem{thm}{Theorem}[section]
\newtheorem{lemma}[thm]{Lemma}
\newtheorem{definition}[thm]{Definition}
\def\1{{\rm l}\hskip -0.21truecm 1}
\def\EE{\mathbb{E}}
\def \eref#1{\hbox{(\ref{#1})}}
\begin{document}

\title[An implicit numerical scheme for a class of BDSDEs]{An implicit numerical scheme for a class of backward doubly stochastic differential equations}
 
\date{}
\author{Yaozhong Hu\and
 David Nualart \and
 Xiaoming Song }

\address{Yaozhong Hu and David Nualart: Department of Mathematics, University of Kansas,   Lawrence, KS 66045, USA.\quad\rm  yhu@ku.edu,nualart@ku.edu}

\thanks{D. Nualart was supported by the NSF grant  DMS1512891.}

\address{Xiaoming Song: Department of Mathematics, Drexel University, Philadelphia, PA 19104, USA. \quad \rm song@math.drexel.edu}

\subjclass[2010]{60H10; 60H07; 60H05}
 \keywords{Malliavin calculus,  Backward doubly   stochastic differential equations,   explicit solution to linear bdsde, 
  implicit scheme, 
  H\"older continuity of the solution pairs,  rate of convergence.}
\begin{abstract}
In this paper, we consider a class of backward doubly stochastic differential equations (BDSDE  for short) with general terminal value and general random generator. Those BDSDEs do not involve any forward diffusion processes.   By using the techniques of Malliavin calculus, we are able to establish the $L^p$-H\"{o}lder continuity of the solution pair. Then, an implicit numerical scheme for the BDSDE is proposed and the rate of convergence is obtained in the $L^p$-sense.
As a by-product,  we obtain an explicit representation of the process $Y$ in the solution pair to a linear BDSDE with random coefficients. 
\end{abstract}

  \maketitle

\section{Introduction} \setcounter{equation}{0}
Let $\{W_t\}_{0\leq t\leq T}$ and $\{B_t\}_{0\leq t\leq T}$ be two independent standard Brownian motions on a probability space $(\Omega,\mathcal{F},\mathbb{P})$. Let $\mathcal{N}$ denote the class of $\mathbb{P}$-null sets. For each $t\in[0,T]$, we define
\[
\mathcal{F}_t=\mathcal{F}_t^W\vee\mathcal{F}_{t,T}^B,
\]
where
\[
\mathcal{F}_t^W=\sigma\{W_s, 0\leq s\leq t\}\vee \mathcal{N}\ \mbox{and}\ \mathcal{F}_{t,T}^B=\sigma\{B_s-B_t, t\leq s\leq T\}\vee \mathcal{N}.
\]
The purpose of this paper is to study an implicit numerical scheme for the
 following backward doubly stochastic
differential equation (BDSDE for short) 
\begin{equation}  \label{bdsde}
Y_t=\xi+\int_t^Tf(r, Y_r,Z_r)dr+\int_t^Tg(Y_r)d\overleftarrow{B_r}-\int_t^TZ_rdW_r,\quad 0\le t\le T\,,
\end{equation}
where $\xi$ is a  given terminal value, $f$ is a  given (random)
generator, $g$ is a deterministic function,  and $\int_t^Tg(Y_r)d\overleftarrow{B_r}$ denotes the backward It\^{o} integral.   BDSDEs were introduced by Pardoux and Peng in \cite{PP94} as a generalization of the classical backward stochastic differential equations (BSDEs for short)
considered   in the pioneering paper  \cite{PP90} by these authors,
in order to give a probabilistic
representation of solutions to a class of systems of quasilinear stochastic partial differential equations.

There is an extensive literature on numerical   schemes for BSDEs. Most of the works deal with the case where the terminal random variable $\xi$ is a functional of a forward diffusion process $\{X_t\}_{0\le t\le T}$  and the generator $f$ is of the form $f(t,X_t, Y_t, Z_t)$, where $f$ is a deterministic function. Starting from the four-step numerical scheme considered by Ma, Protter and Yong  in  \cite{MPY}, many authors have contributed to this problem
(see, for instance, \cite{Ba,BD,BT,BDM,GLW,IDR,MPSS}).  In  \cite{Zh}, Zhang introduced a discretization method based on the $L^2$-regularity of the process $Z$.  In \cite{HNS10} the present authors  considered  the case of a BSDE with a general terminal value   $\xi$ which is
twice differentiable in the sense of Malliavin calculus and the
first and second  Malliavin derivatives satisfy some integrability conditions
and we also made similar assumptions for the generator $f$.
  In this general  framework,  we were
able to  obtain  an estimate of the form $\mathbb{E} |Z_t-Z_s|^p\le K|t-s|^{\frac p2}$ for any $p\ge 2$ and we applied this result to study the rate of convergence of different types of numerical schemes, including an implicit one.

Unlike the case of BSDEs, numerical schemes for BDSDEs have received much less attention. The presence of a backward  It\^o stochastic integral creates  additional difficulties when deriving the  path regularity of the process $Z$ and computing the rate of convergence of numerical schemes. 
In the present paper, we consider an implicit numerical scheme introduced  by Bachouch, Ben Lasmar, Matoussi and Mnif in a unpublished note \cite{BBMM}. Under the general assumptions on the terminal variable $\xi$ and the generator $f$ considered in \cite{HNS10} we have been able to show the H\"older continuity of the process $Z$ and to derive a rate of convergence of the scheme (see the estimate  \eqref{s-3-34-1}) in Theorem \ref{t-3-10}. The approach is similar to that developed in \cite{HNS10}, however, there is a new significant difficulty.   Unlike BSDE, linear BDSDEs do not have an explicit solution in exponential form and the desired representation of the process $Y$ (see formula \eqref{e-3-5}) cannot be  deduced  directly from It\^o's formula. We shall get around this difficulty by using  Taylor expansion with   some explicit computations.   
On the other hand,  we need to assume that  the function $g$ in Equation \eqref{bdsde} is a   deterministic  functional  of the process $Y$.
 
The paper is organized as follows. Section  2 contains some preliminaries on backward stochastic integrals and Malliavin calculus. The Malliavin calculus provides a representation of the random variable $Z_t$ as the derivative $D_tY_t$, which is very useful to derive the H\"older continuity and other regularity properties of $Z$. Our main results are stated in Section 3 and Sections 4 to 7 are devoted to the proofs.

The results of this paper still hold if the Brownian motions are multidimensional, but we have restricted the presentation to the one-dimensional case for the sake of simplicity.

\section{ Notation and preliminaries}
\subsection{Backward and forward It\^{o} integrals}

Recall all the notations defined at the beginning of the previous section and for any $t\in [0,T]$ define
\[
\mathcal{G}_t=\mathcal{F}_t^W\vee\mathcal{F}^B_{0,T}.
\]
Note that $\{\mathcal{F}_t\}_{0\leq t\leq T}$ is not a filtration, while $\{\mathcal{G}_t\}_{0\leq t\leq T}$ is a filtration. 

We say that a stochastic process $\{u_t\}_{0\leq t\leq T}$ is $\mathcal{G}$-adapted ($\mathcal{F}$-adapted, respectively),  if  $u_t$ is $\mathcal{G}_t$-measurable ($\mathcal{F}_t$-measurable, respectively) for all $t\in[0,T]$. Consider the following spaces of random variables and processes:
\begin{itemize}
\item $M^{p}$, for any $p\geq 2$, denotes the class of $L^p$-integrable random variables $F$
with a stochastic integral representation of the form%
\begin{equation*}
F=\mathbb{E}\left(F|\mathcal{G}_0\right)+\int_{0}^{T}u_{t}dW_{t},
\end{equation*}%
where $u$ is a $\mathcal{G}$-adapted stochastic process satisfying $\sup_{0\leq t\leq T}\mathbb{E}%
|u_{t}|^{p}<\infty $;
 \item $H_{\mathcal{F}}^{p}([0,T])$ ($H^p_{\mathcal{G}}([0,T])$, respectively), for any $p\geq 1$, denotes the set of 
 jointly measurable and $\mathcal{F}$-adapted ($\mathcal{G}$-adapted, respectively)  processes $\{\varphi_t\}_{0\leq t\leq T} $ satisfying
\[
\Vert \varphi \Vert _{H^{p}}=\left( \mathbb{E}\left( \int_{0}^{T}|\varphi
_{t}|^{2}dt\right) ^{\frac{p}{2}}\right) ^{\frac{1}{p}}<\infty;
\]
\item $S_{\mathcal{F}}^{p}([0,T])$ ($S^p_{\mathcal{G}}([0,T])$, respectively), for any $p\geq 1$, denotes the set of  all RCLL (right-continuous with left limits) $\mathcal{F}$-adapted ($\mathcal{G}$-adapted, respectively) processes
$\{\varphi_t\}_{0\leq t\leq T} $ satisfying 
 $$\Vert \varphi \Vert _{S^{p}}=\left(\mathbb{E} \sup_{0\leq t\leq
T}|\varphi _{t}|^{p} \right)^{\frac{1}{p}}<\infty.$$

\end{itemize}

The backward It\^{o} integral is  similar to the classical (forward) It\^{o} integral,  if we just reverse the time. Therefore, It\^{o}'s formula and It\^{o}'s isometry also hold for the backward It\^{o} integral. In particular,  It\^{o}'s formula has the following form because of the backward integral (see Lemma 1.3 in \cite{PP94}).
\begin{lemma}
Suppose that $\beta$, $\gamma$ and $\sigma$ are processes  in $ H^2_{\mathcal{F}}([0,T])$. Let the $\mathcal{F}$-adapted process $\alpha$ have the following form
\[
\alpha_t=\alpha_0+\int_0^t\beta_sds+\int_0^t\gamma_sd\overleftarrow{B_s}+\int_0^t\sigma_sdW_s,\,\quad   0\leq t\leq T.
\]
Then,
\[
\alpha_t^2=\alpha_0^2+2\int_0^t\alpha_s\beta_sds+2\int_0^t\alpha_s\gamma_sd\overleftarrow{B_s}+2\int_0^t\alpha_s\sigma_sdW_s-\int_0^t\gamma_s^2ds+\int_0^t\sigma_s^2ds.
\]
More generally, if $f\in C^2(\mathbb{R})$, then we have the following It\^{o}'s formula
\begin{eqnarray*}
f(\alpha_t)&=&f(\alpha_0)+\int_0^tf'(\alpha_s)\beta_sds+\int_0^tf'(\alpha_s)\gamma_sd\overleftarrow{B_s}+\int_0^tf'(\alpha_s)\sigma_sdW_s\\
&& -\frac{1}{2}\int_0^tf''(\alpha_s)\gamma_s^2ds+\frac{1}{2}\int_0^tf''(\alpha_s)\sigma_s^2ds,
\end{eqnarray*}
for $t\in [0,T]$.
\end{lemma}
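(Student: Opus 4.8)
The plan is to prove the backward Itô formula by reducing it to the classical (forward) Itô formula via a time reversal, which is the natural strategy suggested by the remark in the text that ``the backward Itô integral is similar to the classical (forward) Itô integral, if we just reverse the time.'' The starting point is that the backward Itô integral $\int_0^t \gamma_s \, d\overleftarrow{B_s}$ is defined, up to the time-reversal, as a forward Itô integral with respect to the reversed Brownian motion. Concretely, I would introduce $\widetilde{B}_s = B_{T-s} - B_T$, which is a standard Brownian motion with respect to the filtration $\widetilde{\mathcal{F}}_s = \sigma\{B_T - B_{T-u}, 0 \le u \le s\}$, and rewrite the backward integral against $\overleftarrow{B}$ over a subinterval of $[0,T]$ as an ordinary forward Itô integral against $\widetilde{B}$. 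The quadratic-variation term then accounts for the sign discrepancy.

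First I would establish the quadratic ($f(x)=x^2$) case, since the general case follows the same template and the sign bookkeeping is cleanest to check there. After reversing time, $\alpha$ becomes a forward Itô process driven by two independent Brownian motions (the reversed $\widetilde B$ and, since $W$ plays no role in the reversal, the forward $W$ read backwards — here one must be careful because $\alpha$ is $\mathcal{F}$-adapted and not adapted to a single filtration, so the cleanest approach is to fix a partition and work with Riemann--Itô sums directly). I would write $\alpha_t^2 - \alpha_0^2 = \sum_i (\alpha_{t_{i+1}}^2 - \alpha_{t_i}^2) = \sum_i 2\alpha_{t_i}(\alpha_{t_{i+1}} - \alpha_{t_i}) + \sum_i (\alpha_{t_{i+1}} - \alpha_{t_i})^2$, substitute the increment of $\alpha$, and pass to the limit. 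The crucial point is identifying the limit of the quadratic-variation sum: the $dW$ part contributes $+\int_0^t \sigma_s^2\,ds$ exactly as in the forward theory, while the $d\overleftarrow{B}$ part contributes $-\int_0^t \gamma_s^2\,ds$; the opposite sign is precisely the effect of the backward integration, and the $ds$ (bounded-variation) part contributes nothing to the quadratic variation. The cross term $\int_0^t \alpha_s \gamma_s \, d\overleftarrow{B_s}$ must be interpreted as a backward integral, and I would verify that the Riemann sums $\sum_i 2\alpha_{t_i}(\cdots)$ converge to the stated backward/forward integrals after the appropriate evaluation-point analysis.

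For the general $C^2$ case I would Taylor expand $f(\alpha_{t_{i+1}}) - f(\alpha_{t_i}) = f'(\alpha_{t_i})(\alpha_{t_{i+1}} - \alpha_{t_i}) + \tfrac12 f''(\theta_i)(\alpha_{t_{i+1}} - \alpha_{t_i})^2$ over a partition, and then pass to the limit term by term exactly as in the quadratic case, using the $H^2_{\mathcal{F}}$ integrability of $\beta,\gamma,\sigma$ and the continuity of $f''$ to control the remainder. The first-order sum produces the three stochastic/Lebesgue integrals $\int_0^t f'(\alpha_s)\beta_s\,ds$, $\int_0^t f'(\alpha_s)\gamma_s\,d\overleftarrow{B_s}$, and $\int_0^t f'(\alpha_s)\sigma_s\,dW_s$, while the second-order sum, by the quadratic-variation computation already done, yields $-\tfrac12\int_0^t f''(\alpha_s)\gamma_s^2\,ds + \tfrac12\int_0^t f''(\alpha_s)\sigma_s^2\,ds$, with the characteristic minus sign on the $\gamma^2$ term.

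The main obstacle I anticipate is handling the evaluation points correctly in the mixed setting: because $\alpha$ is adapted to the two-sided family $\{\mathcal{F}_t\}$ (which, as the paper emphasizes, is not a filtration), the forward integral against $W$ requires left-endpoint evaluation while the backward integral against $B$ requires right-endpoint evaluation, and in the Taylor/quadratic-variation sums these two conventions coexist. Making the limit arguments rigorous — in particular showing that the choice of evaluation point inside each subinterval does not affect the limit of the first-order sums, and that the cross terms between the $\gamma\,d\overleftarrow{B}$ and $\sigma\,dW$ contributions vanish in $L^2$ by the independence of $W$ and $B$ together with the standard $\sum \mathbb{E}[(\Delta_i)^2] \to 0$ type estimates — is where the real care is needed. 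Since the statement is quoted as Lemma 1.3 of \cite{PP94}, I would in practice cite that reference, but the reduction-by-time-reversal argument above is the honest route to a self-contained proof.
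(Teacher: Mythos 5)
First, a point of comparison: the paper does not prove this lemma at all --- it is quoted verbatim as Lemma 1.3 of Pardoux--Peng \cite{PP94}, with only the heuristic remark that backward integrals behave like forward integrals under time reversal. So your proposal is measured against the standard proof rather than anything in this paper. Your overall strategy (fix a partition, Taylor-expand $f(\alpha_{t_{i+1}})-f(\alpha_{t_i})$, identify the limit of the first- and second-order sums) is indeed the standard and workable route, and you correctly flag the two real difficulties: the mixed adaptedness (so that a clean global time reversal is unavailable) and the coexistence of left- and right-endpoint conventions.

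There is, however, a concrete sign-accounting error that would derail the computation if carried out as written. You assert that in the quadratic-variation sum ``the $d\overleftarrow{B}$ part contributes $-\int_0^t \gamma_s^2\,ds$'' and later that ``the second-order sum, by the quadratic-variation computation already done, yields $-\tfrac12\int f''(\alpha_s)\gamma_s^2\,ds+\tfrac12\int f''(\alpha_s)\sigma_s^2\,ds$.'' This cannot be right: for $f''\geq 0$ the second-order sum $\sum_i \tfrac12 f''(\theta_i)(\alpha_{t_{i+1}}-\alpha_{t_i})^2$ is nonnegative, and it converges to $+\tfrac12\int_0^t f''(\alpha_s)(\gamma_s^2+\sigma_s^2)\,ds$ --- both signs positive. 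The minus sign on the $\gamma^2$ term enters elsewhere, namely through the first-order sum: with your left-endpoint decomposition, the sum $\sum_i f'(\alpha_{t_i})\int_{t_i}^{t_{i+1}}\gamma_r\,d\overleftarrow{B}_r$ is \emph{not} a Riemann approximation of the backward integral $\int_0^t f'(\alpha_r)\gamma_r\,d\overleftarrow{B}_r$ (which requires right-endpoint evaluation); the discrepancy $\sum_i\int_{t_i}^{t_{i+1}}\bigl(f'(\alpha_{t_i})-f'(\alpha_r)\bigr)\gamma_r\,d\overleftarrow{B}_r$ converges to $-\int_0^t f''(\alpha_r)\gamma_r^2\,dr$ (a full, not half, correction), and only the combination $-1+\tfrac12=-\tfrac12$ produces the stated coefficient. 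Equivalently, if you instead Taylor-expand around the right endpoint so that the $B$-sum matches the backward integral directly, the quadratic-variation term does enter with a minus sign, but then the $W$-sum acquires the compensating $+\int f''\sigma^2$ correction. One cannot apply different evaluation conventions to different pieces of the same increment without tracking these cross-corrections; as proposed, your bookkeeping would return $+\tfrac12\int_0^t f''(\alpha_s)\gamma_s^2\,ds$ rather than $-\tfrac12\int_0^t f''(\alpha_s)\gamma_s^2\,ds$. The final formula you target is correct, but the mechanism you assign to the minus sign is not, and this is precisely the step where the proof lives.
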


\subsection{Malliavin calculus with respect to the Brownian motion $W$}
In this subsection,   we present  some preliminaries on Malliavin calculus and we
refer  the reader to the books   \cite{hu16} and  \cite{N06}  for
more details.

Let $\mathbf{H}=L^2([0,T])$ be the separable Hilbert space of all
square integrable real-valued functions on the interval $[0,T]$
with scalar product denoted by $\langle
\cdot,\cdot\rangle_\mathbf{H}$. The norm of an element $h\in
\mathbf{H}$ will be denoted by $\Vert h\Vert_\mathbf{H}$. For any
$h\in \mathbf{H}$ we put $W(h)=\int_0^Th(t)dW_t$ and $B(h)=\int_0^Th(t)d{B}_t$.

For any $m,\, n\in\mathbb{N}$, we denote by $C_p^\infty(\mathbb{R}^{m+n})$ the set of all infinitely
 differentiable functions $g: \mathbb{R}^{m+n}\rightarrow
\mathbb{R}$ such that $g$ and all of its partial derivatives have
polynomial growth. We    make use of the notation $\partial_i g=\frac{\partial
g}{\partial x_i}$ whenever $g\in C^1(\mathbb{R}^{m+n})$.

Let $\mathcal{S}$ denote the class of smooth and cylindrical random variables such
that a random variable $F\in \mathcal {S}$ has the form
\begin{equation}\label{smooth}
F=g(W(h_1),\dots,W(h_m),B(k_1),\dots,B(k_n)),
\end{equation}
where $g$ belongs to $C_p^\infty(\mathbb{R}^{m+n})$, $h_1,\dots,h_m$ and $k_1, \dots, k_n$
are in $\mathbf{H}$, and $m, n\in\mathbb{N}$.

For a smooth and cylindrical random variable $F$  of the
form (\ref{smooth}), its Malliavin derivative with respect to $W$  is the $\mathbf{H}$-valued random variable
given by
\[
D_tF=\sum_{i=1}^m\partial_i g(W(h_1),\dots,W(h_m),B(k_1),\dots, B(k_n))h_i(t), \, t\in[0,T].
\]
For any $p\geq1$ we will denote the domain of $D$ in $L^p(\Omega)$
by $\mathbb{D}^{1,p}$, meaning that $\mathbb{D}^{1,p}$ is the
closure of the class of smooth and cylindrical  random variables $\mathcal{S}$ with
respect to the norm
\[
\Vert F\Vert_{1,p}=\left(\EE|F|^p+\EE\Vert
DF\Vert_\mathbf{H}^p\right)^{\frac{1}{p}}.
\]
We can define the iteration of the operator $D$ in such a way that
for a smooth  and cylindrical random variable $F$, the iterated derivative $D^kF$
is a random variable with values in $\mathbf{H}^{\otimes k}$. 
For every $p\geq1$ and any natural number $k\geq1$ we introduce
the seminorm on $\mathcal{S}$ defined by
\[
\Vert F\Vert_{k,p}=\left(\EE|F|^p+\sum_{j=1}^k\EE\Vert
D^jF\Vert_{\mathbf{H}^{\otimes j}}^p\right)^{\frac{1}{p}}.
\]
We will denote by $\mathbb{D}^{k,p}$ the completion of the family
of smooth  and cylindrical random variables $\mathcal{S}$ with respect to the norm
$\Vert\cdot\Vert_{k,p}$.

Let $\mu$ be the Lebesgue measure on $[0,T]$. For any $k\geq 1$
and $F\in\mathbb{D}^{k,p}$, the derivative
\[
D^kF=\{D^k_{t_1,\dots,t_k}F,\,t_i\in[0,T],\, i=1,\dots,k\},
\]
is a measurable function on the product space
$ [0,T]^k \times \Omega$, which is defined a.e. with respect to the
measure $  \mu^k \times \mathbb{P} $.

 We denote by $\mathbb{L}_{a}^{1,p}$  the set of
real-valued jointly measurable processes $u=\left\{
u_t\right\}_{0\leq t\leq T} $ such that

\begin{description}
\item[(i)] For each $t\in[0,T]$, $u_t$ is $\mathcal{F}_t$-measurable.
\item[(ii)] For almost all $t\in \lbrack 0,T],\,u_t\in
\mathbb{D}^{1,p}$.

\item[(iii)] $\mathbb{E}\left(
\left(\int_{0}^{T}|u_t|^{2}dt\right)^{\frac{p}{2}}+\left(\int_{0}^{T}\int_{0}^{T}|D_{\theta
}u_t|^{2}d\theta dt\right)^{\frac{p}{2}}\right) <\infty $.
\end{description}

\setcounter{equation}{0}
\section{Main results}

In this section, we will give a summary of main results whose proofs will be provided in subsequent  sections.

\subsection{Estimates on the solutions of BDSDEs}
We assume that the generator  in the BDSDE (\ref{bdsde}) is a jointly measurable function $f: \mathbb{R}_+ \times \mathbb{R}^2 \times \Omega  \rightarrow \mathbb{R}$, such that,
for each fixed pair $(y,z)\in\mathbb{R}^2$,  $f(t, y,z)$   is $\mathcal{F}_t$-measurable for all $t\in[0,T]$. We suppose also that the terminal value  $\xi$ is an  $\mathcal{F}_T$-measurable
random variable.
\begin{definition}
A solution to the BDSDE (\ref{bdsde}) is a pair of $\mathcal{F}$-adapted processes $(Y,Z)$ such that: $\int_0^T|Z_t|^2dt<\infty$,  $\int_0^T\left|g(Y_s)\right|^2ds<\infty$, $\int_0^T|f(t,Y_t,Z_t)|dt<\infty$, a.s., and
\[
Y_t=\xi+\int_t^Tf(r, Y_r,Z_r)dr+\int_t^Tg(Y_s)d\overleftarrow{B_s}-\int_t^TZ_rdW_r,\quad 0\le t\le T.
\]
\end{definition}

The following estimate on the solution to the BDSDE \eqref{bdsde} will play an important role in obtaining the path regularity of $Z$.

\begin{theorem}\label{T.2.1} 
Fix $q\geq2$. Suppose that $\mathbb{E}|\xi|^q<\infty$ and the generator $f$ satisfies $f(\cdot, 0,0) \in H^q_{\mathcal{F}}([0,T])$. We also assume that the generator $f$ and the function $g$ satisfy the following 
Lipschitz conditions: there exists a positive constant $L$ such that
\begin{eqnarray*}
|f(t,y_{1},z_{1})-f(t,y_{2},z_{2})|&\leq&
L(|y_{1}-y_{2}|+|z_{1}-z_{2}|)\,,\ \mbox{a.s.}\  \mu \times \mathbb{P}\ \mbox{on}\  [0,T] \times \Omega ,\\
|g(y_1)-g(y_2)|&\leq &L|y_1-y_2|,
\end{eqnarray*}%
for all $y_1,y_2\in\mathbb{R}$ and $z_1,z_2\in\mathbb{R}$.
 Then,
there exists a unique solution pair $(Y,Z)\in
S_{\mathcal{F}}^{q}([0,T])\times H_{\mathcal{F}}^{q}([0,T])$ to
Equation (\ref{bdsde}). Moreover, we have the following estimate
for the solution
\begin{equation}
\mathbb{E}\sup_{0\leq t\leq T}|Y_{t}|^{q}+\mathbb{E}\left(
\int_{0}^{T}|Z_{t}|^{2}dt\right) ^{\frac{q}{2}}\leq K\left(
\mathbb{E}|\xi
|^{q}+\mathbb{E}\left( \int_{0}^{T}|f(t,0,0)|^{2}dt\right) ^{\frac{q}{2}%
}+ |g(0)|^q\right) ,  \label{e.2.1}
\end{equation}%
where $K$ is a constant depending only on $L$, $q$ and $T$.\newline
\end{theorem}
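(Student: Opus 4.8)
The plan is to prove the theorem in the two classical steps for well-posedness of backward equations: first establish the a priori bound \eqref{e.2.1} for an arbitrary solution, and then obtain existence and uniqueness by a Picard/contraction argument in which essentially the same energy estimate is reused. Since both parts rest on one energy computation, I would present the a priori estimate first. Writing the equation in differential form, $dY_s=-f(s,Y_s,Z_s)\,ds-g(Y_s)\,d\overleftarrow{B_s}+Z_s\,dW_s$, I identify the coefficients of the lemma above as $\beta_s=-f(s,Y_s,Z_s)$, $\gamma_s=-g(Y_s)$ and $\sigma_s=Z_s$, and apply the backward It\^o formula to $\varphi(x)=|x|^q$ (which is $C^2$ for $q\ge 2$, with $\varphi'(x)=q|x|^{q-1}\mathrm{sgn}(x)$) on the interval $[t,T]$. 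This yields the energy identity
\begin{align*}
|Y_t|^q+\frac{q(q-1)}{2}\int_t^T|Y_s|^{q-2}|Z_s|^2\,ds
&=|\xi|^q+q\int_t^T|Y_s|^{q-1}\mathrm{sgn}(Y_s)f(s,Y_s,Z_s)\,ds\\
&\quad+\frac{q(q-1)}{2}\int_t^T|Y_s|^{q-2}g(Y_s)^2\,ds+M_t,
\end{align*}
where $M_t$ collects the two mean-zero stochastic integrals $q\int_t^T|Y_s|^{q-1}\mathrm{sgn}(Y_s)g(Y_s)\,d\overleftarrow{B_s}$ and $-q\int_t^T|Y_s|^{q-1}\mathrm{sgn}(Y_s)Z_s\,dW_s$.

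From here the estimate is routine but must be organized carefully. I would use the Lipschitz bounds $|f(s,Y_s,Z_s)|\le |f(s,0,0)|+L|Y_s|+L|Z_s|$ and $g(Y_s)^2\le 2g(0)^2+2L^2|Y_s|^2$, handling the cross term $|Y_s|^{q-1}|Z_s|$ by Young's inequality in the form $qL|Y_s|^{q-1}|Z_s|\le \tfrac{q(q-1)}{4}|Y_s|^{q-2}|Z_s|^2+C|Y_s|^q$, so that a small multiple of $\int_t^T|Y_s|^{q-2}|Z_s|^2\,ds$ is absorbed into the left-hand side. To obtain the precise data dependence of \eqref{e.2.1}, I would \emph{not} integrate $|f(s,0,0)|^q$ pointwise, but instead keep the driver data grouped as $\int_t^T|Y_s|^{q-1}|f(s,0,0)|\,ds\le \sup_{s}|Y_s|^{q-1}\int_0^T|f(s,0,0)|\,ds$ and then apply H\"older in time followed by Young; this is what produces the $L^2$-in-time norm $\big(\int_0^T|f(s,0,0)|^2\,ds\big)^{q/2}$ rather than the larger $\int_0^T|f(s,0,0)|^q\,ds$. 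Taking expectations kills $M_t$, and Gronwall's lemma bounds $\sup_t\mathbb{E}|Y_t|^q$ together with $\mathbb{E}\int_0^T|Y_s|^{q-2}|Z_s|^2\,ds$. To upgrade $\sup_t\mathbb{E}|Y_t|^q$ to $\mathbb{E}\sup_t|Y_t|^q$, I take the supremum in $t$ inside the identity before taking expectations and apply the Burkholder--Davis--Gundy inequality to both martingale terms, absorbing a small multiple of $\mathbb{E}\sup_t|Y_t|^q$ on the right. Finally, the bound on $\mathbb{E}\big(\int_0^T|Z_s|^2\,ds\big)^{q/2}$ is read off from the $q=2$ version of the identity: raise to the power $q/2$, apply Burkholder--Davis--Gundy to the stochastic integrals, and control the remaining terms by the already-bounded $\mathbb{E}\sup_t|Y_t|^q$ and the data.

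For existence and uniqueness I would run a Picard iteration on $S_{\mathcal{F}}^{q}([0,T])\times H_{\mathcal{F}}^{q}([0,T])$. Given $(U,V)$, I freeze both the generator $f(s,U_s,V_s)$ and the coefficient $g(U_s)$, so that the resulting decoupled BDSDE has terminal value $\xi$ and two source terms that do not involve the unknown pair. Its solution is constructed by representing the $\mathcal{G}_T$-measurable random variable $\eta=\xi+\int_0^T f(s,U_s,V_s)\,ds+\int_0^T g(U_s)\,d\overleftarrow{B_s}$ as a stochastic integral with respect to $W$ under the genuine filtration $\{\mathcal{G}_t\}$ (with respect to which $W$ is a Brownian motion), which produces $Z$, and defining $Y_t$ as the conditional expectation of $\xi+\int_t^T f\,ds+\int_t^T g\,d\overleftarrow{B}$ given $\mathcal{F}_t$; one then checks that the pair so obtained is $\mathcal{F}$-adapted and solves the frozen equation. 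Applying the $L^2$-energy estimate of the previous paragraph to the difference of two images shows that this map is a contraction once we pass to an equivalent norm with an exponential weight $e^{\beta t}$ (or iterate over sufficiently short subintervals); its unique fixed point is the unique solution, and \eqref{e.2.1} passes to the limit.

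The genuinely new difficulty, compared with the BSDE case, is the backward It\^o integral, and it shows up at two points. First, in the energy identity the backward quadratic-variation term enters with the \emph{opposite} sign (the $-\int\gamma_s^2$ of the lemma), so the term $\tfrac{q(q-1)}{2}\int|Y_s|^{q-2}g(Y_s)^2\,ds$ appears with a positive sign and cannot be absorbed by the dissipative term $\tfrac{q(q-1)}{2}\int|Y_s|^{q-2}|Z_s|^2\,ds$; it must instead be routed through Gronwall via the Lipschitz bound on $g$, which works precisely because Gronwall tolerates an arbitrary constant. Second, the martingale-representation and adaptedness step must be handled with the awareness that $\{\mathcal{F}_t\}$ is not a filtration while $\{\mathcal{G}_t\}$ is; representing with respect to $W$ under $\{\mathcal{G}_t\}$ and then recovering $\mathcal{F}$-adaptedness is the delicate part. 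I expect the fussiest bookkeeping to be (i) pinning down the exact $L^2$-in-time dependence on $f(\cdot,0,0)$ and the $|g(0)|^q$ term, and (ii) controlling the contraction constant; the fact that $g$ depends on $Y$ only is what lets us avoid the usual smallness restriction on the $z$-Lipschitz constant of the backward coefficient.
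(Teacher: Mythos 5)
Your proposal is correct in substance but follows a genuinely different route from the paper. The paper does not reprove well-posedness at all: it simply cites Pardoux--Peng \cite{PP94} for existence and uniqueness, and it establishes the estimate \eqref{e.2.1} without ever applying It\^o's formula to $|x|^q$. Instead, on a short subinterval $[a,b]$ it writes $Y_t$ as a $\mathcal{G}_t$-conditional expectation plus known correction terms, applies Doob's maximal inequality, the Burkholder--Davis--Gundy inequality and the It\^o isometry to get a bound on $\mathbb{E}\sup_{a\le t\le b}|Y_t|^q$ and on $\mathbb{E}\bigl(\int_a^b|Z_r|^2dr\bigr)^{q/2}$ with small coefficients $C(b-a)^{q/2}$ in front of the unknown quantities, absorbs these when $b-a$ is small, and then patches the local estimates together by a backward induction over finitely many subintervals (producing a constant of order $C_3^l$). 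Your route is the classical energy method: the backward It\^o formula for $|x|^q$, Young's inequality to absorb the cross term into $\int|Y_s|^{q-2}|Z_s|^2ds$, a Gronwall argument, and BDG to upgrade to $\mathbb{E}\sup_t$, plus a self-contained Picard/martingale-representation construction of the solution. What your approach buys is a one-pass global estimate and independence from \cite{PP94}; what the paper's approach buys is that it sidesteps all the $|Y|^{q-2}|Z|^2$ bookkeeping and, crucially, the sign problem you correctly identify (the backward quadratic variation enters with the unfavorable sign, so the $g$-terms can only be absorbed on short intervals or routed through Gronwall). One point of order in your sketch deserves care: after you bound $\int_t^T|Y_s|^{q-1}|f(s,0,0)|\,ds$ by $\sup_s|Y_s|^{q-1}\int_0^T|f(s,0,0)|\,ds$ and apply Young, the resulting term is $\varepsilon\,\mathbb{E}\sup_s|Y_s|^q$, not $\varepsilon\sup_s\mathbb{E}|Y_s|^q$, so you cannot first run Gronwall on $\sup_t\mathbb{E}|Y_t|^q$ and only afterwards pass to $\mathbb{E}\sup_t$; the clean fix is to take the supremum and the expectation from the outset and apply a backward Gronwall lemma to $\phi(t)=\mathbb{E}\sup_{t\le s\le T}|Y_s|^q$, using that the non-absorbable contributions of the drift and of the backward quadratic variation are all bounded by $C\int_t^T\phi(s)\,ds$ plus data. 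With that reordering (or, equivalently, with the paper's short-interval patching) your argument closes, and your observation that the $y$-only dependence of $g$ removes the usual smallness restriction on the $z$-Lipschitz constant of the backward coefficient is exactly right.
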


\begin{corollary}\label{l.3.7}
Under the assumptions in Theorem \ref{T.2.1}, let $(Y,Z)\in
S^q_\mathcal{F}([0,T])\times H_\mathcal{F}^q([0,T])$ be the unique
solution pair to the BDSDE (\ref{bdsde}). If\, $\sup_{0\le t\le
T}\mathbb{E}|Z_t|^{q}<\infty$, then there exists a constant $C$, depending on $L$, $q$, $T$ and the quantity appearing in the right-hand side of \eqref{e.2.1},
such that, for any $s,\,t\in[0,T]$,
\begin{equation}\label{Hy}
\mathbb{E}|Y_t-Y_s|^q\le C|t-s|^{\frac{q}{2}}.
\end{equation}
\end{corollary}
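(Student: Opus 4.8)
The plan is to assume without loss of generality that $s\le t$ and to read off the increment $Y_t-Y_s$ directly from the equation. Evaluating \eqref{bdsde} at the two times and subtracting, the terminal value $\xi$ and all the integrals over $[t,T]$ cancel, leaving
$$
Y_t-Y_s=-\int_s^tf(r,Y_r,Z_r)\,dr-\int_s^tg(Y_r)\,d\overleftarrow{B_r}+\int_s^tZ_r\,dW_r.
$$
Using $|a+b+c|^q\le 3^{q-1}(|a|^q+|b|^q+|c|^q)$, the problem reduces to bounding each of the three contributions---the drift integral, the backward It\^o integral and the forward It\^o integral---by a constant times $|t-s|^{q/2}$.

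For the drift term I would use the Lipschitz hypothesis in the form $|f(r,Y_r,Z_r)|\le|f(r,0,0)|+L(|Y_r|+|Z_r|)$ and then treat the three resulting pieces by the Cauchy--Schwarz inequality in the time variable. The $f(\cdot,0,0)$ and $Z$ pieces each yield $|t-s|^{q/2}$ times $\|f(\cdot,0,0)\|_{H^q}^q$ and $\|Z\|_{H^q}^q$ respectively, both finite by hypothesis and by $Z\in H^q_{\mathcal F}([0,T])$; the $Y$ piece yields $|t-s|^q\,\mathbb{E}\sup_{0\le r\le T}|Y_r|^q$, which is controlled by \eqref{e.2.1} and is at most $T^{q/2}|t-s|^{q/2}\,\mathbb{E}\sup_r|Y_r|^q$ since $|t-s|\le T$. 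For the backward It\^o integral I would apply the Burkholder--Davis--Gundy inequality, which holds for the backward integral by time reversal just as It\^o's isometry does (Subsection 2.1), to bound its $q$-th moment by $\mathbb{E}\big(\int_s^t|g(Y_r)|^2\,dr\big)^{q/2}$; combining $|g(Y_r)|^2\le 2|g(0)|^2+2L^2|Y_r|^2$ with $\mathbb{E}\sup_r|Y_r|^q<\infty$ again gives the rate $|t-s|^{q/2}$. Notably, neither of these two terms requires anything beyond $Z\in H^q_{\mathcal F}([0,T])$.

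The hard part is the forward It\^o term $\int_s^tZ_r\,dW_r$, and this is precisely where the extra hypothesis $\sup_{0\le t\le T}\mathbb{E}|Z_t|^q<\infty$ is needed. Burkholder--Davis--Gundy bounds its $q$-th moment by $\mathbb{E}\big(\int_s^t|Z_r|^2\,dr\big)^{q/2}$, but the membership $Z\in H^q_{\mathcal F}([0,T])$ alone controls this only by a constant, with no gain in $|t-s|$. To produce the factor $|t-s|^{q/2}$ I would instead apply Jensen's inequality---legitimate since $q/2\ge1$---to the normalized measure $dr/|t-s|$ on $[s,t]$, obtaining
$$
\left(\int_s^t|Z_r|^2\,dr\right)^{q/2}\le|t-s|^{\frac q2-1}\int_s^t|Z_r|^q\,dr,
$$
and then take expectations and use $\mathbb{E}|Z_r|^q\le\sup_{0\le r\le T}\mathbb{E}|Z_r|^q$ to reach $|t-s|^{q/2}\sup_r\mathbb{E}|Z_r|^q$. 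Collecting the three estimates and absorbing all constants into a single $C$ of the stated dependence yields \eqref{Hy}.
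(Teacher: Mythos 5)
Your proposal is correct and follows essentially the same route as the paper: the same decomposition of $Y_t-Y_s$ into the drift, backward It\^o and forward It\^o pieces, Cauchy--Schwarz in time plus the Lipschitz bounds and \eqref{e.2.1} for the first two, and Burkholder--Davis--Gundy followed by Jensen's inequality against the normalized measure $dr/|t-s|$ to convert $\mathbb{E}\bigl(\int_s^t|Z_r|^2dr\bigr)^{q/2}$ into $|t-s|^{q/2}\sup_r\mathbb{E}|Z_r|^q$, which is exactly where the paper also invokes the extra hypothesis. No gaps; your identification of the forward stochastic integral as the only term needing $\sup_t\mathbb{E}|Z_t|^q<\infty$ matches the paper's argument.
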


\subsection{Linear BDSDEs}

As we will see later, the component $Z$ of the solution of a given BDSDE can be represented  in terms of the Malliavin derivative of the solution $Y$, which
 satisfies  a linear BDSDE with random coefficients. In order to describe the
properties of $Z$  we first study a class of linear BDSDEs.

We consider the following linear BDSDE:
\begin{equation}\label{lbdsde}
Y_t=\xi+\int_t^T(\alpha_sY_s+\beta_sZ_s+f_s)ds+\int_t^T\gamma_sY_sd\overleftarrow{B_s}-\int_t^TZ_sdW_s,\ 0\leq t\leq T,
\end{equation}
where the processes $\alpha,  \beta, \gamma$ and $f$  are  jointly measurable and $\mathcal{F}$-adapted. 

We impose the following boundedness condition on the coefficients.
\begin{description}
\item[ (H1)]   The processes $\{{\alpha }_{t}\}_{0\le t\le T}$, $\{{\beta }%
_{t}\}_{0\le t\le T}$  and $\{\gamma_t\}_{0\leq t\leq T}$ are uniformly bounded, namely, there exists a constant $L>0$ such that
\[
|\alpha_t|+|\beta_t|+|\gamma_t|\leq L,\ a.s. \  \mu\times\mathbb{P} \ \mbox{on}\ [0,T]\times \Omega.
\]
\end{description}

Under the condition \textbf{(H1)}, we define a process $\rho$ by
\begin{equation}\label{joint1}
\rho_t=\exp\left\{\int_0^t \beta_sdW_s+\int_0^t\gamma_sd\overleftarrow{B_s}+\int_0^t\left(\alpha_s-\frac{1}{2}\beta_s^2-\frac{1}{2}\gamma_s^2\right)ds\right\}, \quad 0\leq t\leq T.
\end{equation}
Note that $\rho_t$ is not $\mathcal{F}_t$-measurable but $\mathcal{G}_t$-measurable and that $\rho$ has a continuous version. In fact, $\rho$ does not
satisfy any stochastic differential equation and hence It\^{o}'s formula cannot be applied to the process $\rho$. However, we are still able to prove the following result.
\begin{theorem}\label{p-3-3}
Let   $\xi\in L^2(\Omega)$ and $f\in H_{\mathcal{F}}^2([0,T])$. Assume that the processes $\alpha$, $\beta$ and $\gamma$ satisfy condition \textbf{(H1)}. Let  $\rho$ be defined in \eqref{joint1}. Then, there exists a unique solution $(Y,Z)\in S_{\mathcal{F}}^2([0,T])\times H^2_{\mathcal{F}}([0,T])$ to \eqref{lbdsde} and the following equation holds
\begin{equation}\label{e-3-5}
Y_t\rho_t=\xi\rho_T+\int_t^T\rho_sf_sds-\int_t^T(\rho_sZ_s+Y_s\rho_s\beta_s)dW_s,\quad 0\leq t\leq T.
\end{equation}
\end{theorem}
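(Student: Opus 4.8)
The plan is to dispose of existence and uniqueness quickly and then concentrate on the representation \eqref{e-3-5}, which is the substantive part. For existence and uniqueness I note that the generator $(y,z)\mapsto \alpha_s y+\beta_s z+f_s$ is Lipschitz in $(y,z)$ uniformly in $(s,\omega)$ by \textbf{(H1)}, and the backward coefficient $y\mapsto\gamma_s y$ is Lipschitz with the bounded random slope $\gamma_s$. This is a mild extension of the setting of Theorem \ref{T.2.1} (which treats a deterministic $g$); the same Picard/contraction argument in $S^2_{\mathcal F}([0,T])\times H^2_{\mathcal F}([0,T])$ applies verbatim with $\gamma_s$ bounded and $\mathcal F$-adapted, giving a unique solution pair $(Y,Z)$. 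I also record that, since $\alpha,\beta,\gamma$ are bounded, the exponent defining $\rho$ has martingale parts with bounded integrands, so $\sup_{0\le t\le T}\mathbb{E}|\rho_t|^p<\infty$ for every $p\ge1$; together with $(Y,Z)\in S^2\times H^2$ and $f\in H^2_{\mathcal F}$ this makes every integral in \eqref{e-3-5} well defined, the stochastic integral being taken with respect to the filtration $\{\mathcal G_t\}$, for which $W$ is a Brownian motion and the integrand $\rho_sZ_s+Y_s\rho_s\beta_s$ is adapted.

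For the representation the formal mechanism is transparent. If one could apply the It\^o formula for processes with forward and backward integrals recalled in Section 2 (and its polarization) to $Y_t\rho_t$, then using the formal dynamics $d\rho_t=\rho_t(\alpha_t-\gamma_t^2)\,dt+\rho_t\gamma_t\,d\overleftarrow{B_t}+\rho_t\beta_t\,dW_t$ (obtained by applying $\exp$ to the defining exponent) together with $dY_t=-(\alpha_tY_t+\beta_tZ_t+f_t)\,dt-\gamma_tY_t\,d\overleftarrow{B_t}+Z_t\,dW_t$, a direct cancellation gives
\[
d(Y_t\rho_t)=-\rho_tf_t\,dt+(\rho_tZ_t+Y_t\rho_t\beta_t)\,dW_t .
\]
Here the backward terms $Y_t\rho_t\gamma_t\,d\overleftarrow{B_t}$ from $Y\,d\rho$ and $-\rho_t\gamma_tY_t\,d\overleftarrow{B_t}$ from $\rho\,dY$ cancel; the drift $-\gamma_t^2Y_t\rho_t$ in $\rho$ is compensated by the backward quadratic covariation of $Y$ and $\rho$, which carries the characteristic minus sign of backward It\^o calculus (cf. the $-\int_0^t\gamma_s^2\,ds$ term in the formula of Section 2); and the forward covariation $\rho_tZ_t\beta_t$ cancels $-\rho_t\beta_tZ_t$. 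Integrating from $t$ to $T$ and using $Y_T=\xi$ yields \eqref{e-3-5}. This is only formal, for the reason flagged before the statement: after differentiating the exponential the backward integrand $\rho_s\gamma_s$ is only $\mathcal G_s$-measurable, hence anticipating with respect to the backward filtration of $B$, so neither the ``equation'' for $\rho$ nor the product rule is literally justified.

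To make this rigorous I would discretize. Fix $t$, take a partition $t=s_0<\cdots<s_N=T$ with mesh tending to $0$, and use the telescoping identity $Y_t\rho_t-\xi\rho_T=-\sum_{i=0}^{N-1}(Y_{s_{i+1}}\rho_{s_{i+1}}-Y_{s_i}\rho_{s_i})$. For each increment I write $\rho_{s_{i+1}}=\rho_{s_i}e^{\Delta_i}$, where $\Delta_i=M_i^W+M_i^B+D_i$ is the increment of the exponent over $[s_i,s_{i+1}]$ (with $M_i^W=\int_{s_i}^{s_{i+1}}\beta\,dW$, $M_i^B=\int_{s_i}^{s_{i+1}}\gamma\,d\overleftarrow B$ and $D_i$ the drift part), and Taylor expand $e^{\Delta_i}-1=\Delta_i+\tfrac12\Delta_i^2+R_i$, while expanding $Y_{s_{i+1}}-Y_{s_i}$ through \eqref{lbdsde}. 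Collecting terms, the first-order drift and forward-$W$ martingale sums converge to $-\int_t^T\rho_sf_s\,ds$ and $\int_t^T(\rho_sZ_s+Y_s\rho_s\beta_s)\,dW_s$ plus drift pieces that cancel algebraically; the second-order pieces $\tfrac12\rho_{s_i}(M_i^W)^2$, $\tfrac12\rho_{s_i}(M_i^B)^2$ and the covariation coming from the cross term $(Y_{s_{i+1}}-Y_{s_i})(\rho_{s_{i+1}}-\rho_{s_i})$ supply the It\^o corrections; and the mixed forward--backward products $\rho_{s_i}M_i^WM_i^B$ vanish in $L^1$ because $W$ and $B$ are independent.

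The crucial and most delicate point, and the one I expect to be the main obstacle, is the treatment of the backward contributions carrying the anticipating factor $\rho_{s_i}$, namely $Y_{s_i}\rho_{s_i}\int_{s_i}^{s_{i+1}}\gamma_r\,d\overleftarrow{B_r}$ from $Y\,d\rho$ and $-\rho_{s_i}\int_{s_i}^{s_{i+1}}\gamma_rY_r\,d\overleftarrow{B_r}$ from $\rho\,dY$. Individually these Riemann sums are not classical backward It\^o integrals, since $\rho_{s_i}$ is $\mathcal G_{s_i}$-measurable and thus anticipates the increments of $B$ on $[s_i,s_{i+1}]$; so the strategy is never to isolate them but to pair them, leaving the residual $\rho_{s_i}\int_{s_i}^{s_{i+1}}\gamma_r(Y_{s_i}-Y_r)\,d\overleftarrow{B_r}$, which is of second order and, together with the backward part of the cross term, reassembles into the compensation seen in the formal computation. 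Executing these cancellations quantitatively requires careful endpoint bookkeeping (forward parts handled at left endpoints so that $\int\cdot\,dW$ is an It\^o integral with a $+\tfrac12$ correction, backward parts at right endpoints so that $\int\cdot\,d\overleftarrow B$ is a backward It\^o integral with the $-\tfrac12$ correction), and the bounds on the Taylor remainders $\sum_iR_i$, on the anticipating residuals, and on the mixed $W$--$B$ products would all be controlled in $L^1$ (or $L^2$) using $\sup_t\mathbb{E}|\rho_t|^p<\infty$, $(Y,Z)\in S^2\times H^2$, the $L^2$-regularity of the Riemann sums of $Y$ and $Z$, and the boundedness of $\alpha,\beta,\gamma$.
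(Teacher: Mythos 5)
Your overall strategy coincides with the paper's: telescoping the product $Y\rho$ over a partition, Taylor-expanding $\rho_{s_{i+1}}=\rho_{s_i}e^{\Delta_i}$, expanding the $Y$-increments through \eqref{lbdsde}, and pairing the two anticipating backward sums so that only the residual $\sum_i\rho_{s_i}\int_{s_i}^{s_{i+1}}\gamma_r\,(Y_{\cdot}-Y_r)\,d\overleftarrow{B}_r$ survives. The formal cancellation you describe is also the correct one. The gap is in the last step, which you dispose of with the sentence that the anticipating residuals ``would all be controlled in $L^1$ (or $L^2$)'' using the moment bounds on $\rho$, $(Y,Z)\in S^2\times H^2$ and the boundedness of the coefficients. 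This is precisely the estimate that those tools cannot deliver. Because $\rho_{s_i}$ depends on $\int_0^{s_i}\gamma\,d\overleftarrow{B}$, it anticipates the backward increments of $B$ on $[s_i,s_{i+1}]$, so the summands of the residual are \emph{not} orthogonal and the It\^o isometry is unavailable for the sum. The only bound left is term by term via Cauchy--Schwarz: each term is of order $\left(\mathbb{E}\int_{s_i}^{s_{i+1}}|Y_{s_{i+1}}-Y_r|^2dr\right)^{1/2}\lesssim \Delta_i+\Delta_i^{1/2}\bigl(\mathbb{E}\int_{s_i}^{s_{i+1}}|Z_u|^2du\bigr)^{1/2}$, and summing over $i$ gives $O(1)$, not $o(1)$. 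The same obstruction hits the term $\tfrac12\sum_i Y\rho_{s_i}\bigl[(\int_{s_i}^{s_{i+1}}\gamma\,d\overleftarrow{B})^2-\int_{s_i}^{s_{i+1}}\gamma^2dr\bigr]$, which is the backward analogue of the quadratic-variation correction and which must also be shown to vanish.

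The paper closes this gap with a two-scale partition argument that your plan does not contain: it imposes $n|\pi^n|\le K$ on the fine partitions, introduces an auxiliary coarse partition $\pi^m$, and replaces $\rho_{s_i}$ by its value $\rho_{\tau_i}$ at the nearest coarse point to the left. For fixed $m$, the inner sums over fine intervals inside one coarse cell are genuine backward-martingale Riemann sums (the frozen factor $\rho_{t_j^m}$ multiplies the whole cell), hence converge to $0$ in $L^2$ as $n\to\infty$; the replacement error is controlled uniformly in $n$ by the H\"older continuity of $\rho$ (part (b) of Lemma \ref{L-5-1}, giving the a.s.\ bound \eqref{eq5}) together with the constraint $n|\pi^n|\le K$, and one then lets $m\to\infty$. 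Without this device (or an equivalent one), the decisive limits --- the paper's \eqref{e-5-22} and \eqref{e-5-23} --- are not established, and this is exactly the ``new significant difficulty'' relative to the BSDE case that the introduction advertises. So: right skeleton, correct identification of where the danger lies, but the mechanism that actually kills the anticipating residuals is missing.
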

 
 As a consequence,  we have the following representation for $Y$:
\begin{equation}\label{e-3-6}
Y_t=\rho_t^{-1}\mathbb{E}\left(\left.\xi\rho_T+\int_t^T\rho_sf_sds \right|\mathcal{G}_t\right).
\end{equation}

The following result on the moment estimate of the increment of $Y$ in the linear BDSDE \eqref{lbdsde} will play a critical role in the proof of our main result in this paper.
\begin{theorem}\label{T-3-4}
Let $q>p\geq 2$ and let $\xi\in L^q(\Omega)$ and $f\in H_{\mathcal{F}}^q([0,T])$. Assume that the processes $\alpha$, $\beta$ and $\gamma$ satisfy the condition \textbf{(H1)} and that the random variables $\xi\rho_T$ and $\int_0^T\rho_tf_tdt$ belong to $M^q$, where the process $\rho$ is defined in \eqref{joint1}. Then the linear BDSDE \eqref{lbdsde} has a unique solution $(Y,Z)$, and there exists a constant $K>0$ such that
\begin{equation}\label{e-3-7}
\mathbb{E}|Y_t-Y_s|^p\leq K|t-s|^{\frac{p}{2}},
\end{equation}
for all $s, t\in[0,T]$.
\end{theorem}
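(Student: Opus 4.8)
The plan is to deduce everything from the explicit representation \eqref{e-3-6}, which is available here because the hypotheses $\xi\in L^q(\Omega)$ and $f\in H^q_{\mathcal{F}}([0,T])$ with $q\ge2$ are stronger than those of Theorem \ref{p-3-3} (existence and uniqueness of $(Y,Z)$ also come from that theorem). Set $\Phi=\xi\rho_T+\int_0^T\rho_uf_udu$ and $A_t=\int_0^t\rho_uf_udu$. Since $\rho$ is $\mathcal{G}$-adapted and $\mathcal{F}_u\subseteq\mathcal{G}_t$ for $u\le t$, the variable $A_t$ is $\mathcal{G}_t$-measurable, so \eqref{e-3-6} can be rewritten as
\begin{equation*}
Y_t=\rho_t^{-1}\left(M_t-A_t\right),\qquad M_t:=\mathbb{E}\left(\Phi\mid\mathcal{G}_t\right).
\end{equation*}
By hypothesis $\Phi\in M^q$, so $\Phi=\mathbb{E}(\Phi\mid\mathcal{G}_0)+\int_0^Tu_rdW_r$ with $u$ being $\mathcal{G}$-adapted and $\sup_r\mathbb{E}|u_r|^q<\infty$; hence $M_t=\mathbb{E}(\Phi\mid\mathcal{G}_0)+\int_0^tu_rdW_r$ is a $\mathcal{G}$-martingale. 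Fixing $s<t$, I would decompose
\begin{equation*}
Y_t-Y_s=\rho_t^{-1}(M_t-M_s)+(\rho_t^{-1}-\rho_s^{-1})M_s-\rho_t^{-1}(A_t-A_s)-(\rho_t^{-1}-\rho_s^{-1})A_s
\end{equation*}
and estimate the $L^p$-norm of each term separately.

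Two ingredients drive the rate. First, for the martingale increment $M_t-M_s=\int_s^tu_rdW_r$, the Burkholder--Davis--Gundy inequality together with $\sup_r\mathbb{E}|u_r|^q<\infty$ and Jensen's inequality gives $\mathbb{E}|M_t-M_s|^q\le C|t-s|^{q/2}$. Second, for the increment of $\rho^{-1}$ I would write $\rho_t^{-1}=\exp\{\Theta_t\}$ with $\Theta_t=-\int_0^t\beta_rdW_r-\int_0^t\gamma_rd\overleftarrow{B_r}-\int_0^t(\alpha_r-\tfrac12\beta_r^2-\tfrac12\gamma_r^2)dr$; the mean value theorem yields $|\rho_t^{-1}-\rho_s^{-1}|\le|\Theta_t-\Theta_s|(\rho_t^{-1}+\rho_s^{-1})$, and since \textbf{(H1)} makes the coefficients bounded, the Burkholder--Davis--Gundy inequality (valid for the backward integral as well, by the time reversal discussed in Section 2) gives $\mathbb{E}|\Theta_t-\Theta_s|^m\le C_m|t-s|^{m/2}$ for every $m\ge1$. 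Because $\rho$, $\rho^{-1}$, and their suprema over $[0,T]$ have finite moments of all orders---again a consequence of \textbf{(H1)}---one then obtains $\mathbb{E}|\rho_t^{-1}-\rho_s^{-1}|^m\le C_m|t-s|^{m/2}$ for all $m\ge1$. Finally, conditional Jensen gives $\sup_s\mathbb{E}|M_s|^q\le\mathbb{E}|\Phi|^q<\infty$.

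With these estimates in hand, each term is controlled by Hölder's inequality, and this is exactly where the strict gap $q>p$ is essential. To bound $\mathbb{E}|\rho_t^{-1}(M_t-M_s)|^p$ I would apply Hölder with exponents $\tfrac{q}{q-p}$ and $\tfrac{q}{p}$, using the finiteness of $\mathbb{E}\rho_t^{-pq/(q-p)}$ to absorb the unbounded factor $\rho_t^{-1}$ and $(\mathbb{E}|M_t-M_s|^q)^{p/q}\le C|t-s|^{p/2}$ to supply the rate; the term $(\rho_t^{-1}-\rho_s^{-1})M_s$ is treated the same way, now absorbing $\sup_s\mathbb{E}|M_s|^q$ and invoking the increment bound for $\rho^{-1}$. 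The two $A$-terms are of at least the same order: from $|A_t-A_s|\le(\int_s^t\rho_r^2dr)^{1/2}(\int_0^Tf_r^2dr)^{1/2}$, the bound $f\in H^q_{\mathcal{F}}([0,T])$, and the moment estimates on $\rho$, a further application of Hölder produces a contribution of order $|t-s|^{p/2}$. Summing the four bounds yields \eqref{e-3-7}. I expect the main obstacle to be the factor $\rho_t^{-1}$: it is unbounded and, because of the backward It\^o integral, satisfies no stochastic differential equation, so Itô's formula cannot be used to analyze it (the same difficulty flagged before Theorem \ref{p-3-3}). The remedy is to exploit the explicit exponential form of $\rho$ to estimate its increments directly and to pay for the lost integrability through the gap $q>p$ in Hölder's inequality.
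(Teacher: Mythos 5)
Your proposal is correct and follows essentially the same route as the paper: both start from the representation \eqref{e-3-6}, establish that $\delta_t=\rho_t^{-1}$ has bounded moments of all orders and increments of order $|t-s|^{r/2}$ in $L^r$ (Lemma \ref{L-5-1} applied to $\delta$), and then exploit the $M^q$ hypothesis together with H\"older's inequality across the gap $q>p$ to absorb the unbounded exponential factors. The paper delegates this last step to the analogous argument in Theorem 2.3 of \cite{HNS10}, whereas you carry out the four-term decomposition explicitly; the content is the same.
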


\subsection{The Malliavin calculus for BDSDEs and the path regularity of $Z$}
In this subsection, we consider the Malliavin calculus for the BDSDE \eqref{bdsde}. First, we make the following assumptions on the terminal value $\xi$
and generator $f$.

\medskip
\noindent
{\bf Assumption (A):}
 Fix $2\leq p<\frac{q}{2}$.
\begin{itemize}
\item[(i)]      $\xi \in \mathbb{D}^{2,q}$, and there exists $L>0$,
such that for all $\theta,\,\theta ^{\prime }\in \lbrack 0,T]$,
\begin{equation}
\mathbb{E}|D_{\theta }\xi -D_{\theta ^{\prime }}\xi |^{p}\leq
L|\theta -\theta ^{\prime }|^{\frac{p}{2}},\,  \label{e5}
\end{equation}%
\begin{equation}
\sup_{0\leq \theta\leq T}\mathbb{E}|D_{\theta}\xi |^{q}<\infty ,
\label{e2}
\end{equation}%
and
\begin{equation}\label{e2-2}
\sup_{0\leq \theta \leq T}\sup_{0\leq u\leq
T}\mathbb{E}|D_{u}D_{\theta }\xi |^{q}<\infty .
\end{equation}

\item[(ii)]   The generator   $f(t,y,z)$ has continuous and uniformly bounded first
and second order partial derivatives with respect to $y$ and $z$,
and  $f(\cdot,0,0)\in H_{ \mathcal{F}}^{q}([0,T])$.

\item[(iii)] The function $g$ has continuous and bounded first and second order derivatives $g'$ and $g''$ respectively.

\item[(iv)]  Assume that $\xi$ and $f$ satisfy  the above conditions (i) and (ii). Let $(Y,Z)$ be the unique solution to
(\ref{bdsde}) with terminal value  $\xi$ and generator $f$.  For each $(y,z)\in\mathbb{R}\times\mathbb{R}$, $f(\cdot ,y,z)$, $\partial _{y}f(\cdot ,y,z)$%
, and $\partial _{z}f(\cdot ,y,z)$ belong to
$\mathbb{L}_{a}^{1,q}$, and the corresponding  Malliavin derivatives $Df(\cdot
,y,z)$, $D\partial _{y}f(\cdot ,y,z)$, and $D\partial _{z}f(\cdot
,y,z)$ satisfy
\begin{eqnarray}
&&\sup_{0\leq \theta \leq T}\EE\left( \int_{\theta}^{T} |D_{\theta
}f(t,Y_t,Z_t)|^{2}dt\right) ^{\frac{q}{2}}<\infty, \label{e3}\\ 
&&\sup_{0\leq \theta \leq T}\mathbb{E}\left(
\int_{\theta}^{T}|D_{\theta }\partial
_{y}f(t,Y_t,Z_t)|^{2}dt\right) ^{\frac{q}{2}}<\infty
\,,\label{e5-1}\\
&& \sup_{0\leq \theta \leq T}\mathbb{E}\left(
\int_{\theta}^{T}|D_{\theta }\partial
_{z}f(t,Y_t,Z_t)|^{2}dt\right) ^{\frac{q}{2}}<\infty \,,\label{e6}
\end{eqnarray}
and there exists $L>0$ such that for any $t\in (0,T]$, and for any
$0\le\theta ,\, \theta ^{\prime }\leq t\le T$
\begin{equation}
\mathbb{E}\left( \int_{t}^{T}|D_{\theta }f(r,Y_r,Z_r)-D_{\theta
^{\prime }}f(r,Y_r,Z_r)|^{2}dr\right) ^{\frac{p}{2}}\leq L|\theta
-\theta ^{\prime }|^{\frac{p}{2}}. \label{e4}
\end{equation}
For each $\theta \in \lbrack 0,T]$, and each pair of $(y,z)$,
$D_{\theta }f(\cdot ,y,z)\in \mathbb{L}_{a}^{1,q}$  and it has
continuous partial derivatives
with respect to $y,z$, which are denoted by $\partial_yD_\theta f(t,y,z)$%
and $\partial_zD_\theta f(t,y,z)$, and the Malliavin derivative
$D_{u}D_{\theta }f(t,y,z)$ satisfies
\begin{equation}\label{e6-2}
\sup_{0\leq \theta \leq T}\sup_{0\leq u\leq T} \mathbb{E}\left(
\int_{\theta\vee u}^{T}|D_{u}D_{\theta }f(t,Y_t,Z_t)|^{2}dt\right)
^{\frac{q}{2}}<\infty .
\end{equation}
\end{itemize}

\bigskip

The following property is easy to check and we omit the proof.

\begin{remark} Conditions (\ref{e5-1}) and (\ref{e6}) imply
\begin{empheq}[left=\empheqlbrace]{align}
&\sup_{0\leq \theta \leq T}\mathbb{E}\left(
\int_{\theta}^{T}|\partial _{y}D_{\theta
}f(t,Y_t,Z_t)|^{2}dt\right) ^{\frac{q}{2}}<\infty \,,\\ 
&
\sup_{0\leq \theta \leq T}\mathbb{E}\left(
\int_{\theta}^{T}|\partial _{z}D_{\theta
}f(t,Y_t,Z_t)|^{2}dt\right) ^{\frac{q}{2}}<\infty \,.
\end{empheq} 
\end{remark}

We refer to \cite[Section 2.4]{HNS10} for several examples where Assumption {\bf (A)} is satisfied, including the cases where $\xi$ is a multiple stochastic integral, a twice Fr\'echet differentiable function of $W$ and a nonnecessarily Lipschitz function of  the  trajectories of a forward diffusion.

The  following  is the main result in this subsection.
\begin{theorem}\label{t.3.1}
Let Assumption  {\bf (A)}  be satisfied. \begin{itemize}
\item[(a)]  Suppose that $(Y,Z)$ is the unique solution pair  in 
$S^q_{\mathcal{F}}(0,T]) \times H^q_{\mathcal{F}}(0,T])$
to the BDSDE (\ref{bdsde}). Then, $Y$ and $Z$ are in
 $ \mathbb{L}^{1,q}_a$ and there exists a version of the Malliavin derivatives
 $\{(D_\theta Y_t,\,D_\theta Z_t)\}_{\,0\leq\theta,\,t\leq T}$ of the solution pair that
 satisfies the following linear BDSDE:
\begin{eqnarray}
D_\theta Y_t &=&D_\theta \xi+\int_t^T[\partial_y{f(r,Y_r,Z_r)}
D_\theta
Y_r+\partial_z{f(r,Y_r,Z_r)}D_\theta Z_r +D_\theta f(r,Y_r,Z_r)]dr\nonumber\\
&&  +\int_t^Tg'(Y_r)D_\theta Y_rd\overleftarrow{B}_r-\int_t^TD_\theta
Z_rdW_r,\quad 0\le\theta\leq t\leq T\,; \label{e.3.12} \\
D_\theta Y_t&=&0,\ D_\theta Z_t \ =\ 0,\quad 0\le t<\theta\leq
T.\label{e.3.12-2}
\end{eqnarray}
Moreover, $\{D_{t} Y_t\}_{0\leq t\leq T}$ defined by
(\ref{e.3.12}) gives a version of $\{Z_t\}_{0\leq t\leq T}$,
namely, $\mu \times \mathbb{P} $ a.e.
\begin{equation}
Z_t=D_{t} Y_t\,.\label{e.3.13}
\end{equation}
\item[(b)] There exists a constant $K>0$, such that, for all $%
s,\, t\in [0,T]$,
\begin{equation}\label{e-z}
\mathbb{E}|Z_{t}-Z_{s}|^{p}\leq K|t-s|^{\frac{p}{2}}.
\end{equation}
\end{itemize}
\end{theorem}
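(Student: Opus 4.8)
The plan is to establish part (a) by a Picard approximation combined with the commutation rules of the Malliavin derivative, and then to deduce the regularity of $Z$ in part (b) from the \emph{linear} BDSDE satisfied by $D_\theta Y$, applying the increment estimate of Theorem \ref{T-3-4} along one variable and an a priori estimate of the type \eqref{e.2.1} along the other.

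\textbf{Part (a).} First I would introduce the Picard iterates $(Y^{(n)},Z^{(n)})$ associated with \eqref{bdsde}, as in the proof of Theorem \ref{T.2.1}, and show by induction that each $Y^{(n)}_t,Z^{(n)}_t$ lies in $\mathbb{D}^{1,q}$ by applying $D_\theta$ to the equation at each step. Since the Malliavin derivative is taken with respect to $W$ while $B$ is independent of $W$, the operator $D_\theta$ commutes with the backward It\^o integral $\int_t^T g(Y_r)d\overleftarrow{B}_r$ and produces the term $\int_t^T g'(Y_r)D_\theta Y_r\,d\overleftarrow{B}_r$; the chain rule applied to $f(r,Y_r,Z_r)$ yields the coefficients $\partial_y f$, $\partial_z f$ and the explicit term $D_\theta f$; and the commutation with the forward integral gives $\int_t^T D_\theta Z_r\,dW_r$ with no boundary term when $\theta\le t$. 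The boundedness of the derivatives of $f$ and $g$ together with the first-order integrability conditions in Assumption \textbf{(A)} provide uniform bounds on the $\mathbb{L}^{1,q}_a$-norms of the iterates, so the sequence converges in $\mathbb{L}^{1,q}_a$; the limit gives $Y,Z\in\mathbb{L}^{1,q}_a$ and shows that $\{D_\theta Y_t,D_\theta Z_t\}$ solves \eqref{e.3.12}, while \eqref{e.3.12-2} follows from the $\mathcal{F}_t$-measurability of $(Y_t,Z_t)$ and the fact that $D_\theta$ does not see the $B$-dependence. The identity $Z_t=D_tY_t$ in \eqref{e.3.13} is then obtained as in \cite{HNS10}, by evaluating the Malliavin derivative on the diagonal and using the martingale representation attached to the forward integral; the backward integral contributes nothing here since it is $B$-measurable.

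\textbf{Part (b).} Fixing $s,t$ with $s\le t$ without loss of generality, I would use \eqref{e.3.13} and the decomposition
\[
Z_t-Z_s = D_tY_t-D_sY_s = \bigl(D_tY_t-D_sY_t\bigr)+\bigl(D_sY_t-D_sY_s\bigr).
\]
For the second term, observe that for the fixed index $\theta=s$ the process $\{D_sY_r\}_{r}$ solves the linear BDSDE \eqref{e.3.12}, which is exactly of the form \eqref{lbdsde} with $\alpha_r=\partial_yf(r,Y_r,Z_r)$, $\beta_r=\partial_zf(r,Y_r,Z_r)$, $\gamma_r=g'(Y_r)$ (all bounded, so \textbf{(H1)} holds), inhomogeneous term $D_sf(r,Y_r,Z_r)$ and terminal value $D_s\xi$; Theorem \ref{T-3-4} then gives $\mathbb{E}|D_sY_t-D_sY_s|^p\le K|t-s|^{p/2}$. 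For the first term, the difference process $\{D_tY_r-D_sY_r\}_{t\le r\le T}$ solves, on $[t,T]$, a linear BDSDE with the same bounded coefficients, terminal value $D_t\xi-D_s\xi$ and inhomogeneous term $(D_tf-D_sf)(r,Y_r,Z_r)$; an a priori estimate of the type \eqref{e.2.1} together with the H\"older hypotheses \eqref{e5} and \eqref{e4} yields $\mathbb{E}|D_tY_t-D_sY_t|^p\le C|t-s|^{p/2}$. Combining the two bounds produces \eqref{e-z}.

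\textbf{Main obstacle.} I expect the delicate point to be the verification that Theorem \ref{T-3-4} is applicable to the derivative equation, i.e.\ that $D_s\xi\,\rho_T$ and $\int_0^T\rho_t D_sf(t,Y_t,Z_t)\,dt$ belong to $M^q$, where $\rho$ is given by \eqref{joint1} with the coefficients above. This requires a further (second-order) Malliavin differentiation of the solution, and it is precisely here that the second-order hypotheses \eqref{e2-2} and \eqref{e6-2} of Assumption \textbf{(A)} enter, used to control $D_uD_\theta Y$ and $D_uD_\theta Z$ uniformly in $L^q$. A secondary difficulty, already present in part (a), is that $\rho$ is only $\mathcal{G}$-adapted and solves no stochastic differential equation, so the representation \eqref{e-3-5} of Theorem \ref{p-3-3} must replace a direct It\^o-formula argument; the independence of $B$ and $W$ is what keeps the Malliavin computations with respect to $W$ tractable despite the presence of the backward integral.
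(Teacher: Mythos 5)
Your proposal follows essentially the same route as the paper: Picard iteration plus commutation of the Malliavin derivative for part (a), and for part (b) the identical decomposition $Z_t-Z_s=(D_tY_t-D_sY_t)+(D_sY_t-D_sY_s)$, treating the first difference via the a priori estimate of Theorem \ref{T.2.1} with hypotheses \eqref{e5} and \eqref{e4}, and the second via Theorem \ref{T-3-4} applied to the linear BDSDE for $D_sY$. You also correctly identify the paper's main technical point, namely verifying the $M^{p'}$ membership of $\xi\rho_T$ and $\int\rho_t D_sf\,dt$ by a second-order Malliavin differentiation of $\rho$ using \eqref{e2-2} and \eqref{e6-2}.
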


\begin{remark}\label{r.3.8}
From Theorem \ref{t.3.1}  we know that $\{(D_\theta Y_t, D_\theta
Z_t)\}_{0\le \theta\le t\le T}$ satisfies Equations (\ref{e.3.12}) and \eqref{e.3.12-2}
and   $Z_t=D_tY_t$, $\mu\times P$ a.e. Moreover,   since
(\ref{e2}) and (\ref{e3})  hold, we can apply the estimate
(\ref{e.2.1})   in Theorem \ref{T.2.1}  to the linear BDSDE
(\ref{e.3.12})-\eqref{e.3.12-2} and deduce $\sup_{0\le t\le
T}\mathbb{E}|Z_t|^{q}<\infty$. Therefore,  by Corollary \ref{l.3.7},
the process $Y$ satisfies the  inequality (\ref{Hy}). By
Kolmogorov's continuity criterion, this implies that  $Y$ has
H\"{o}lder continuous trajectories of order $\gamma$ for any
$\gamma<\frac 12-\frac 1q$.
\end{remark}

\subsection{An implicit numerical scheme for \eqref{bdsde}}

In this subsection, we consider an implicit numerical scheme for the BDSDE \eqref{bdsde}. By using the path regularity of the process $Z$ in \eqref{e-z}, we are able to give an estimate on the error in $L^p$-sense.

We will need the following fixed point result  in the construction of  our Euler scheme for the BDSDE \eqref{bdsde}. Its proof is easy to obtain, so we omit it here.

\begin{remark}\label{r-3-9}
Let $f$ satisfy (ii) in Assumption  {\bf (A)}, and let $h>0$ be a constant with $h L<1$, where $L$ is the Lipschitz constant for $f$. For any given $\eta,\, z\in\mathbb{R}$ and $t\in[0,T]$, there exists a unique $y=y(\omega)$, such that, 
\[
y=\eta+hf(t,y,z),\ \mbox{a.s.}.
\]
\end{remark}

\bigskip

Let $\pi=\{  0=t_0<t_1<\dots< t_n=T\}$ be a partition of the interval $[0,T]$ and
$|\pi|=\max\limits_{0\leq i\leq n-1}|t_{i+1}-t_i|$. Denote $\Delta_i=t_{i+1}-t_i$, $\Delta B_i=B_{t_{i+1}}-B_{t_i}$ and $\Delta W_i=W_{t_{i+1}}-W_{t_i}$, %
 $0\leq i\leq n-1$. We also assume that 
 \begin{equation}\label{s-3-20}
 |\pi |L<1,
 \end{equation}
 where $L$ is the Lipschitz constant  of the generator $f$.

From the BDSDE (\ref{bdsde}), we know that, when $t\in
[t_i,\,t_{i+1}]$,
\begin{equation}  \label{e.4.1}
Y_t=Y_{t_{i+1}}+\int_t^{t_{i+1}}f(r,Y_r,Z_r)dr+\int_t^{t_{i+1}}g(Y_r)d\overleftarrow{B}_r-\int_t^{t_{i+1}}Z_rdW_r.
\end{equation}

We consider a  numerical  scheme  similar to that introduced in  \cite{BBMM}. For this scheme, we are able to achieve an estimate on the error in $p$-th moment, which is better than the estimates existing in the literature.

The numerical scheme we consider is as follows:
\begin{equation}\label{s-3-22}
Y^{\pi}_{t_n}=\xi^\pi,\  Z_{t_n}^\pi=0,
\end{equation}
and for { $i=n-1, n-2, \dots, 1, 0$}, we define  $ Y^{\pi}_{t_i}$ as follows
\begin{eqnarray}
Z^\pi_{t_i}&=&\frac{1}{\Delta_i}\mathbb{E}\left(Y_{t_{i+1}}^{\pi}\Delta W_i+g(Y^\pi_{t_{i+1}})\Delta B_i\Delta W_i\bigg|\mathcal{F}_{t_i}\right),\label{nu-z}\\
 Y^{\pi}_{t_i}&=&\mathbb{E}\left(Y_{t_{i+1}}^{\pi}+g(Y^\pi_{t_{i+1}})\Delta B_i\bigg|\mathcal{F}_{t_i}\right)
+f\left(t_{i},Y_{t_{i}}^{\pi} , Z^{\pi}_{t_{i}}
\right)\Delta_i,\label{nu-y}
\end{eqnarray}
where $\xi^\pi\in L^p(\Omega)$ is an approximation of the terminal
condition $\xi$.  Then Remark \ref{r-3-9}, \eqref{nu-z} and \eqref{nu-y} lead to a backward recursive formula for the sequence
$\{Y^{\pi} _{t_i},Z^{\pi} _{t_i}\}_{0\le i\le n}$.

Next, for each partition $\pi$, we introduce $(Y^{1,\pi},Z^{1,\pi})$ and give a connection between the approximation solution $(Y^\pi,Z^\pi)$ and the new defined approximation $(Y^{1,\pi},Z^{1,\pi})$. 
More precisely, we proceed as follows. Once $Y_{t_{i+1}}^{\pi}$ and $%
Z^{\pi}_{t_{i+1}}$, which are $\mathcal{F}_{t_{i+1}}$-measurable, are defined, then, for $t\in [t_i,t_{i+1}]$,   we set
\[
Y_t^{1,\pi}=\mathbb{E}\left(Y_{t_{i+1}}^\pi+g(Y_{t_{i+1}}^\pi)\Delta B_i\bigg|\mathcal{F}_t\vee\mathcal{F}_{t_i,T}^B\right).
\] 
By the stochastic integral representation, we have
\begin{equation}\label{e.3.23-s}
Y_{t}^{1,\pi} =Y_{t_{i+1}}^\pi+g(Y_{t_{i+1}}^\pi)\Delta B_i-\int_{t}^{t_{i+1}}Z^{1,\pi}_rdW_r,\quad t\in[t_i,t_{i+1}],
\end{equation}
where $Y_t^{1,\pi}$ and $Z_t^{1,\pi}$ are $\mathcal{F}_t\vee \mathcal{F}_{t_i,T}^B$-measurable for all $t\in[t_i, t_{i+1}]$. 
In particular, at the endpoint $t=t_i$, it holds that
\begin{equation}\label{e.3.24}
Y_{t_i}^{1,\pi}=\mathbb{E}\left(Y_{t_{i+1}}^\pi+g(Y_{t_{i+1}}^\pi)\Delta B_i\bigg|\mathcal{F}_{t_i}\right).
\end{equation}
Hence  from \eqref{nu-z} and \eqref{e.3.23-s} (with $t=t_i$)   it follows 
\begin{eqnarray} 
Z_{t_i}^\pi
&=&\frac{1}{\Delta_i}\mathbb{E}\left(\left[Y_{t_{i+1}}^{\pi} +g(Y^\pi_{t_{i+1}})\Delta B_i\right] \Delta W_i\bigg|\mathcal{F}_{t_i}\right)\nonumber\\
&=&\frac{1}{\Delta_i}\mathbb{E}\left(\left[Y_{t_{i}}^{1,\pi} +\int_{t_i}^{t_{i+1}}Z^{1,\pi}_rdW_r\right]
\Delta W_i \bigg|\mathcal{F}_{t_i}\right)\nonumber\\
&=&\frac{1}{\Delta_i}\mathbb{E}\left(\int_{t_i}^{t_{i+1}}Z_r^{1,\pi}dr\bigg|\mathcal{F}_{t_i}\right). \label{e.3.25}
\end{eqnarray}
Then, by \eqref{nu-y} and \eqref{e.3.24}, the connection between $Y_{t_i}^\pi$ and $Y_{t_i}^{1,\pi}$ is given by 
\begin{equation}\label{e.3.26}
Y_{t_i}^\pi=Y_{t_i}^{1,\pi}+f(t_i, Y_{t_i}^\pi, Z_{t_i}^\pi)\Delta_i.
\end{equation}
 Thus, from \eqref{e.3.23-s} with $t=t_i$ and \eqref{e.3.26}, we have
\begin{equation}\label{s-3-28}
Y_{t_i}^\pi=Y_{t_{i+1}}^\pi+f(t_i, Y_{t_i}^\pi, Z_{t_i}^\pi)\Delta_i+g(Y_{t_{i+1}}^\pi)\Delta B_i-\int_{t_i}^{t_{i+1}}Z^{1,\pi}_rdW_r,\quad i=n-1,\dots, 0.
\end{equation}

\begin{theorem}\label{t-3-10}
Let Assumption  {\bf (A)}  be satisfied, and let the partition $\pi$ satisfy \eqref{s-3-20}. Consider the approximation scheme \eqref{s-3-22}-\eqref{nu-z}. Assume that $\xi^\pi\in L^p(\Omega)$ and that there exists a constant $L_1>0$ such that
\begin{equation}\label{s-3-25}
|f(t_2,y,z)-f(t_1,y,z)|\leq L_1|t_2-t_1|^{\frac{1}{2}},\ a.s.
\end{equation}
for all $t_1,t_2\in[0,T]$ and $y,z\in\mathbb{R}$. Then, there are positive constants $K$ and $\delta$, independent of the partition $\pi$, such that, if $|\pi|<\delta$, then
\begin{equation}\label{s-3-34-1}
\mathbb{E}\max\limits_{0\leq i\leq n-1}|Y_{t_i}-Y_{t_i}^\pi|^p+\mathbb{E}\left(\int_0^T|Z_r-Z_r^{1,\pi}|^2dr\right)^{\frac{p}{2}}\leq K\left(\mathbb{E}|\xi-\xi^\pi|^p+|\pi|^{\frac{p}{2}}\right).
\end{equation}
\end{theorem}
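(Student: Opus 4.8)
The plan is to reduce everything to a one-step error recursion at the grid points and then close it by a discrete Gronwall argument, using the path regularity of $Z$ from Theorem \ref{t.3.1}(b) together with the regularity of $Y$ from Corollary \ref{l.3.7} (see Remark \ref{r.3.8}) to control the local discretization errors. First I would subtract the exact relation \eqref{e.4.1} (taken at $t=t_i$) from the scheme written in the form \eqref{s-3-28}. Setting $e_i=Y_{t_i}-Y_{t_i}^\pi$, and observing that $g(Y_{t_{i+1}}^\pi)\Delta B_i=\int_{t_i}^{t_{i+1}}g(Y_{t_{i+1}}^\pi)d\overleftarrow{B}_r$, this yields the clean grid-point identity
\[
e_i=e_{i+1}+A_i+M_i^B-M_i^W,
\]
where $A_i=\int_{t_i}^{t_{i+1}}[f(r,Y_r,Z_r)-f(t_i,Y_{t_i}^\pi,Z_{t_i}^\pi)]dr$, $M_i^B=\int_{t_i}^{t_{i+1}}[g(Y_r)-g(Y_{t_{i+1}}^\pi)]d\overleftarrow{B}_r$ and $M_i^W=\int_{t_i}^{t_{i+1}}(Z_r-Z_r^{1,\pi})dW_r$. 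The point that makes this tractable is that the awkward interpolation term proportional to $(B_t-B_{t_i})$ cancels exactly at $t=t_i$, so I never have to run the backward--forward It\^o formula on the interior of a subinterval; all the analysis lives at the grid.

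Next I would derive an energy balance. Since $e_i$ is $\mathcal{G}_{t_i}$-measurable and $M_i^W$ is a martingale increment of the $W$-filtration enlarged by $\mathcal{F}_{0,T}^B$, one has $\mathbb{E}[e_iM_i^W]=0$; combining this with the rearrangement $e_{i+1}+A_i+M_i^B=e_i+M_i^W$, squaring and taking expectations gives the exact identity
\[
\mathbb{E}|e_i|^2+\mathbb{E}\int_{t_i}^{t_{i+1}}|Z_r-Z_r^{1,\pi}|^2dr=\mathbb{E}|e_{i+1}+A_i+M_i^B|^2,
\]
where It\^o's isometry for the backward integral (the quadratic-variation rule in the Lemma of Section 2) identifies $\mathbb{E}|M_i^W|^2$ with the interval $Z$-error. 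For the stated $L^p$ conclusion I would apply the $|\cdot|^p$-analogue of this balance, or equivalently iterate the recursion and invoke the Burkholder--Davis--Gundy inequality for the martingale sums $\sum_jM_j^W$ and $\sum_jM_j^B$, so that the maximal quantity $\mathbb{E}\max_i|e_i|^p$ and the term $\mathbb{E}(\int_0^T|Z_r-Z_r^{1,\pi}|^2dr)^{p/2}$ are produced simultaneously.

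The remaining work is to bound the right-hand side. I would split $A_i$ into a time increment, a spatial increment along the true path, and a propagation term, writing $f(r,Y_r,Z_r)-f(t_i,Y_{t_i}^\pi,Z_{t_i}^\pi)=[f(r,Y_r,Z_r)-f(t_i,Y_r,Z_r)]+[f(t_i,Y_r,Z_r)-f(t_i,Y_{t_i},Z_{t_i})]+[f(t_i,Y_{t_i},Z_{t_i})-f(t_i,Y_{t_i}^\pi,Z_{t_i}^\pi)]$. The first piece is $O(\Delta_i^{3/2})$ by \eqref{s-3-25}; the second is controlled by the Lipschitz bound together with $\mathbb{E}|Y_r-Y_{t_i}|^p\le C|r-t_i|^{p/2}$ and $\mathbb{E}|Z_r-Z_{t_i}|^p\le K|r-t_i|^{p/2}$; the third is Lipschitz in $e_i$ and in $Z_{t_i}-Z_{t_i}^\pi$. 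Likewise I would split $g(Y_r)-g(Y_{t_{i+1}}^\pi)=[g(Y_r)-g(Y_{t_{i+1}})]+[g(Y_{t_{i+1}})-g(Y_{t_{i+1}}^\pi)]$, the first handled by the regularity of $Y$ and the second Lipschitz in $e_{i+1}$. Summing the purely discretization contributions gives $O(|\pi|^{p/2})$, since $\sum_i\Delta_i^{3/2}\le|\pi|^{1/2}T$. The propagation term must be tied back to the interval $Z$-error through \eqref{e.3.25}: writing $Z_{t_i}-Z_{t_i}^\pi=\frac{1}{\Delta_i}\mathbb{E}[\int_{t_i}^{t_{i+1}}(Z_{t_i}-Z_r^{1,\pi})dr\,|\,\mathcal{F}_{t_i}]$ and inserting $Z_r$ shows that $\Delta_i|Z_{t_i}-Z_{t_i}^\pi|^2$ is dominated by $\int_{t_i}^{t_{i+1}}|Z_{t_i}-Z_r|^2dr$ (small, by $Z$-regularity) plus $\int_{t_i}^{t_{i+1}}|Z_r-Z_r^{1,\pi}|^2dr$, the latter carrying a factor $\Delta_i$ that permits its absorption into the left-hand side once $|\pi|<\delta$. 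Young's inequality with $\Delta_i$-weights then yields a recursion $a_i\le(1+C\Delta_i)a_{i+1}+C\,r_i$ with $\sum_ir_i=O(|\pi|^{p/2})$, which discrete Gronwall closes.

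The hardest part will be the backward It\^o term $M_i^B$. Because $\{\mathcal{F}_t\}$ is not a filtration, the orthogonality and tower arguments have to be organized with respect to $\{\mathcal{G}_t\}$ and the backward $B$-filtration at once, and the cross term $\mathbb{E}[e_{i+1}M_i^B]$ must be bounded without inflating the coefficient of $\mathbb{E}|e_{i+1}|^p$ in the recursion; the negative sign attached to the backward quadratic variation in the It\^o formula of Section 2 is exactly what keeps these contributions of the correct order. Verifying that the absorption of both the interval $Z$-error and the quantity $Z_{t_i}-Z_{t_i}^\pi$ persists when passing from $p=2$ to general $p\ge2$, via Burkholder--Davis--Gundy and the maximal inequality, is the principal technical burden, and the smallness requirement $|\pi|<\delta$ is invoked precisely to carry out these absorptions while keeping the Gronwall constant independent of the partition.
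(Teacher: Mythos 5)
Your grid-point identity $e_i=e_{i+1}+A_i+M_i^B-M_i^W$ is exactly the paper's starting relation (the pre-telescoped form of \eqref{eq7}), your splittings of $A_i$ and of $g(Y_r)-g(Y_{t_{i+1}}^\pi)$ correspond to the paper's remainders $R^\pi,G^\pi$ plus the Lipschitz propagation terms, and your use of \eqref{e.3.25} to tie $Z_{t_i}-Z_{t_i}^\pi$ back to the interval error $Z-Z^{1,\pi}$ is the right move. The $p=2$ energy balance is also correct: $e_i$ is $\mathcal{G}_{t_i}$-measurable and $M_i^W$ is a $\mathcal{G}$-martingale increment, so $\mathbb{E}[e_iM_i^W]=0$, and likewise $\mathbb{E}[e_{i+1}M_i^B]=0$ for the backward integral. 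So for $p=2$ your local recursion plus discrete Gronwall would close.

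The gap is the passage to general $p$, which you defer to ``the $|\cdot|^p$-analogue'' and BDG, and this is precisely where the theorem's content lies. Two concrete obstructions. First, a recursion of the form $a_i\le(1+C\Delta_i)a_{i+1}+Cr_i$ with $a_i=\mathbb{E}|e_i|^p$ only controls $\max_i\mathbb{E}|e_i|^p$, whereas \eqref{s-3-34-1} requires $\mathbb{E}\max_i|e_i|^p$; to get the maximum inside the expectation the paper abandons the one-step recursion, conditions the \emph{telescoped} identity on $\mathcal{G}_{t_i}$ to kill the $dW$-integral (yielding \eqref{s-7-12}), and then applies Doob's maximal inequality to $\max_i\mathbb{E}(\cdot\,|\,\mathcal{G}_{t_i})$ of fixed random variables --- including a separate add-and-subtract device for the backward-integral terms, since $\{\mathcal F_t\}$ is not a filtration. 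Second, your absorption of the propagation term fails for $p>2$ as stated: bounding $\Delta_i|\delta Z^\pi_{t_i}|^2$ by $\int_{t_i}^{t_{i+1}}|Z_r-Z_r^{1,\pi}|^2dr$ and summing the $p$-th powers produces $\sum_i\bigl(\int_{t_i}^{t_{i+1}}|Z_r-Z_r^{1,\pi}|^2dr\bigr)^{p/2}$ (or worse, $\int_0^T|Z_r-Z_r^{1,\pi}|^pdr$), which is not comparable to the quantity $\bigl(\int_0^T|Z_r-Z_r^{1,\pi}|^2dr\bigr)^{p/2}$ you need to absorb, and carries no small prefactor. The paper instead keeps the whole sum $\sum_{j\ge k}|Z_{t_j}\Delta_j-\mathbb{E}(\int_{t_j}^{t_{j+1}}Z_r^{1,\pi}dr\,|\,\mathcal{F}_{t_j})|$ inside one $p$-th power, removes the conditional expectations via Garsia's inequality (Theorem 1.1 in \cite{G}), and only then applies H\"older in time to produce the factor $(T-t_k)^{p/2}$ in front of $\mathbb{E}\bigl(\int_{t_k}^T|Z_r-Z_r^{1,\pi}|^2dr\bigr)^{p/2}$, which is what makes the absorption possible on macro-intervals of fixed small length. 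Without these two devices --- the conditioned global representation with Doob, and Garsia's inequality --- your plan does not yield the stated $L^p$ estimate for $p>2$.
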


\section{Proofs of Theorem \hbox{\ref{T.2.1}} and Corollary \ref{l.3.7}}
\begin{proof}[Proof of Theorem \hbox{\ref{T.2.1}}] 
The proof of the existence and uniqueness of a solution $(Y,Z)\in S_{\mathcal{F}}^{q}([0,T])\times H_{\mathcal{F}}^{q}([0,T])$
can be found in \cite[Theorem 1.1]{PP94}. 

Let us show the estimate  \eqref{e.2.1}.   {  We first consider the BDSDE \eqref{bdsde} on a fixed interval $[a, b]$ for any $0\leq a<b\leq T$.
From \eqref{bdsde} we get, for any $t\in[a,b]$,
\begin{eqnarray*}
Y_t &=&\mathbb{E}\left(\left.Y_b+\int_t^bf(r,Y_r,Z_r)dr+\int_t^bg(Y_r)d\overleftarrow{B}_r\right| \mathcal{G}_t\right)\\
&=&   \mathbb{E}\left(\left.Y_b+\int_a^bf(r,Y_r,Z_r)dr+\int_a^bg(Y_r)d\overleftarrow{B}_r\right| \mathcal{G}_t\right)\\
&&-\int_a^tf(r,Y_r,Z_r)dr-\int_a^bg(Y_r)d\overleftarrow{B}_r +\int_t^bg(Y_r)d\overleftarrow{B}_r.
\end{eqnarray*}
The above conditional expectation and $\int_t^bg(Y_r)d\overleftarrow{B}_r$ are martingales if they are considered as processes indexed by $t$ for $t\in[a,b]$. Using Doob's maximal inequaliy, the Burkholder-Davis-Gundy inequality, the isometry property for It\^{o}'s integral and the Lipschitz conditions on $f$ and $g$, we have
\begin{eqnarray*} 
\mathbb{E}\sup_{a\leq t\leq b}|Y_t|^q&\leq& C\mathbb{E}|Y_b|^q+C\mathbb{E}\left(\int_a^b|f(r,Y_r,Z_r)|dr\right)^q+C\mathbb{E}\left|\int_a^bg(Y_r)d\overleftarrow{B}_r\right|^q\nonumber\\
&&+C\mathbb{E}\left(\int_a^b|g(Y_r)|^2dr\right)^{\frac{q}{2}}\nonumber\\
&\leq& C\mathbb{E}|Y_b|^q+C(b-a)^{\frac{q}{2}}\mathbb{E}\left(\int_a^b|f(r,0,0)|^2dr\right)^{\frac{q}{2}}+C(b-a)^q\mathbb{E}\sup_{a\leq t\leq b}|Y_t|^q\nonumber\\
&&+C(b-a)^{\frac{q}{2}}\mathbb{E}\left(\int_a^b|Z_r|^2dr\right)^{\frac{q}{2}}+C\mathbb{E}\left(\int_a^b|g(Y_r)|^2dr\right)^{\frac{q}{2}}\nonumber\\
&\leq&C\mathbb{E}|Y_b|^q+C(b-a)^{\frac{q}{2}}\mathbb{E}\left(\int_a^b|f(r,0,0)|^2dr\right)^{\frac{q}{2}}+C(b-a)^q\mathbb{E}\sup_{a\leq t\leq b}|Y_t|^q\nonumber\\
&&+C(b-a)^{\frac{q}{2}}\mathbb{E}\left(\int_a^b|Z_r|^2dr\right)^{\frac{q}{2}}+C(b-a)^{\frac{q}{2}}|g(0)|^q+C(b-a)^{\frac{q}{2}}\mathbb{E}\sup_{a\leq t\leq b}|Y_t|^q,\nonumber\\
\end{eqnarray*}
where $C$, in the above inequalities and in the sequel, is a generic constant independent of $a$ and $b$, which may vary from line to line.
Thus, we have 
\begin{eqnarray}\label{e-o-4-1}
\mathbb{E}\sup_{a\leq t\leq b}|Y_t|^q
&\leq& C\mathbb{E}|Y_b|^q+C(b-a)^{\frac{q}{2}}\mathbb{E}\left(\int_a^b|f(r,0,0)|^2dr\right)^{\frac{q}{2}}+C(b-a)^{\frac{q}{2}}\mathbb{E}\sup_{a\leq t\leq b}|Y_t|^q\nonumber\\
&&+C(b-a)^{\frac{q}{2}}\mathbb{E}\left(\int_a^b|Z_r|^2dr\right)^{\frac{q}{2}}+C(b-a)^{\frac{q}{2}}|g(0)|^q\,. 
\end{eqnarray}
By the Burholder-Davis-Gundy inequality, one has
\begin{equation}\label{e-o-4-2}
\mathbb{E}\left(\int_a^b|Z_r|^2dr\right)^{\frac{q}{2}}\leq c_q\mathbb{E}\left|\int_a^bZ_rdW_r\right|^q,
\end{equation}
for some positive constant $c_q$ depending only on $q$.
From \eqref{bdsde}, one also has
\begin{equation}\label{e-o-4-3}
\int_a^bZ_rdW_r=Y_b-Y_a+\int_a^bf(r,Y_r,Z_r)dr+\int_a^bg(Y_r)d\overleftarrow{B}_r.
\end{equation}
From \eqref{e-o-4-2}, \eqref{e-o-4-3}, the isometry property of It\^{o}'s integral and the Lipschitz conditions on $f$ and $g$, one can write
\begin{eqnarray}\label{e-o-4-4}
\mathbb{E}\left(\int_a^b|Z_r|^2dr\right)^{\frac{q}{2}}&\leq& C\mathbb{E}|Y_b|^q+C\mathbb{E}|Y_a|^q+C\mathbb{E}\left(\int_a^b|f(r,Y_r,Z_r)|dr\right)^q+C\mathbb{E}\left|\int_a^bg(Y_r)d\overleftarrow{B}_r\right|^q\nonumber\\
&\leq& C\mathbb{E}|Y_b|^q+C\mathbb{E}\sup_{a\leq t\leq b}|Y_t|^q+C(b-a)^{\frac{q}{2}}\mathbb{E}\left(\int_a^b|f(r,0,0)|^2dr\right)^{\frac{q}{2}}\nonumber\\
&&+C(b-a)^q\mathbb{E}\sup_{a\leq t\leq b}|Y_t|^q+C(b-a)^{\frac{q}{2}}\mathbb{E}\left(\int_a^b|Z_r|^2dr\right)^{\frac{q}{2}}\nonumber\\
&&+C\mathbb{E}\left(\int_a^b|g(Y_r)|^2dr\right)^{\frac{q}{2}}\nonumber\\
&\leq& C_1\mathbb{E}|Y_b|^q+C_1\mathbb{E}\sup_{a\leq t\leq b}|Y_t|^q+C_1(b-a)^{\frac{q}{2}}\mathbb{E}\left(\int_a^b|f(r,0,0)|^2dr\right)^{\frac{q}{2}}\nonumber\\
&&+C_1(b-a)^q\mathbb{E}\sup_{a\leq t\leq b}|Y_t|^q+C_1(b-a)^{\frac{q}{2}}\mathbb{E}\left(\int_a^b|Z_r|^2dr\right)^{\frac{q}{2}}\nonumber\\
&&+C_1(b-a)^{\frac{q}{2}}|g(0)|^q+C_1(b-a)^{\frac{q}{2}}\mathbb{E}\sup_{a\leq t\leq b}|Y_t|^q,
\end{eqnarray}
where $C_1$ is a positive constant independent of $a$ and $b$.
If $C_1(b-a)^{\frac{q}{2}}<\frac12$, then from \eqref{e-o-4-4} we have
\begin{eqnarray}\label{e-o-4-5}
\mathbb{E}\left(\int_a^b|Z_r|^2dr\right)^{\frac{q}{2}}&\leq& 2C_1\mathbb{E}|Y_b|^q+2C_1\mathbb{E}\sup_{a\leq t\leq b}|Y_t|^q+2C_1(b-a)^q\mathbb{E}\sup_{a\leq t\leq b}|Y_t|^q\nonumber\\
&&+2C_1(b-a)^{\frac{q}{2}}\mathbb{E}\sup_{a\leq t\leq b}|Y_t|^q+2C_1(b-a)^{\frac{q}{2}}\mathbb{E}\left(\int_a^b|f(r,0,0)|^2dr\right)^{\frac{q}{2}}\nonumber\\
&&+2C_1(b-a)^{\frac{q}{2}}|g(0)|^q.
\end{eqnarray}
Substituting    \eqref{e-o-4-5} into \eqref{e-o-4-1} yields 
\begin{eqnarray}\label{e-o-4-6}
\mathbb{E}\sup_{a\leq t\leq b}|Y_t|^q
&\leq&C_2(1+(b-a)^{\frac{q}{2}})\mathbb{E}|Y_b|^q+C_2(b-a)^{\frac{q}{2}}\mathbb{E}\sup_{a\leq t\leq b}|Y_t|^q\nonumber\\
&&+C_2(b-a)^{\frac{q}{2}}\mathbb{E}\left(\int_a^b|f(r,0,0)|^2dr\right)^{\frac{q}{2}}+C_2(b-a)^{\frac{q}{2}}|g(0)|^q, 
\end{eqnarray}
for some positive constant $C_2$ independent of $a$ and $b$. 

If $C_2(b-a)^{\frac{q}{2}}<\frac12$, then we have
\begin{eqnarray}\label{e-o-4-7}
\mathbb{E}\sup_{a\leq t\leq b}|Y_t|^q
&\leq&2C_2(1+(b-a)^{\frac{q}{2}})\mathbb{E}|Y_b|^q+2C_2(b-a)^{\frac{q}{2}}\mathbb{E}\left(\int_a^b|f(r,0,0)|^2dr\right)^{\frac{q}{2}}\nonumber\\
&&+2C_2(b-a)^{\frac{q}{2}}|g(0)|^q.
\end{eqnarray}
Denote 
\[
\Theta_{a,b,q} :=\mathbb{E}\sup_{a\leq t\leq b}|Y_t|^q+\mathbb{E}\left(\int_{a}^{b}|Z_r|^2dr\right)^{\frac{q}{2}}\,. 
\]
We choose a positive constant $\delta$ such that
\begin{eqnarray*}
C_1\delta^{\frac{q}{2}}< \frac12,\qquad 
C_2\delta^{\frac{q}{2}}<\frac12.
\end{eqnarray*}
If $b-a\leq \delta$, then  from \eqref{e-o-4-5} and \eqref{e-o-4-7}  it follows that 
 there exists a positive constant $C_3$ independent of $a$ and $b$ such that
\begin{equation}\label{e-o-4-8}
\Theta_{a,b,q}\leq C_3\left(\mathbb{E}|Y_b|^q+\mathbb{E}\left(\int_a^b|f(r,0,0)|^2dr\right)^{\frac{q}{2}}+|g(0)|^q\right).
\end{equation}

Now, let $l=\left[\frac{T}{\delta}\right]+1$ and $t_i=\frac{iT}{l}$ for $i=0, 1,\dots, l$.   
  By
  \eqref{e-o-4-8} we have on the interval $[t_{l-1},t_l]$ 
\begin{equation}\label{e-o-4-9}
 \Theta_{t_{l-1}, t_l, q} 
\leq C_3\left(\mathbb{E}|\xi|^q+\mathbb{E}\left(\int_{t_{l-1}}^{t_l}|f(r,0,0)|^2dr\right)^{\frac{q}{2}}+|g(0)|^q\right).
\end{equation}

On the interval $[t_{l-2}, t_{l-1}]$, 
we have  in a similar way 
\begin{eqnarray*}
 \Theta_{t_{l-2}, t_{l-1}, q} 
 &\leq&  C_3\left(\mathbb{E}|Y_{t_{l-1}}|^q+\mathbb{E}\left(\int_{t_{l-2}}^{t_{l-1}}|f(r,0,0)|^2dr\right)^{\frac{q}{2}}+|g(0)|^q\right)\\
&\leq& C_3^2\left(\mathbb{E}|\xi|^q+\mathbb{E}\left(\int_{t_{l-1}}^{t_l}|f(r,0,0)|^2dr\right)^{\frac{q}{2}}+|g(0)|^q\right)\nonumber\\
&&+C_3\left(\mathbb{E}\left(\int_{t_{l-2}}^{t_{l-1}}|f(r,0,0)|^2dr\right)^{\frac{q}{2}}+|g(0)|^q\right) \,.
\end{eqnarray*}
Or
\begin{eqnarray*}
 \Theta_{t_{l-2}, t_{l-1}, q} 
&\le& C_3^2\mathbb{E}|\xi|^q+C_3^2\mathbb{E}\left(\int_{t_{l-1}}^{t_l}|f(r,0,0)|^2dr\right)^{\frac{q}{2}}\nonumber\\
&&+C_3\mathbb{E}\left(\int_{t_{l-2}}^{t_{l-1}}|f(r,0,0)|^2dr\right)^{\frac{q}{2}} +(C_3^2+C_3)|g(0)|^q.
\end{eqnarray*}
By induction, for $i=1, 2, \dots, l$, one can write
\begin{eqnarray*}
\Theta_{t_{l-i}, t_{l-i+1}, q}
&\leq&C_3^i\mathbb{E}|\xi|^q+\sum_{j=1}^iC_3^{i+1-j}\mathbb{E}\left(\int_{t_{l-j}}^{t_{l-j+1}}|f(r,0,0)|^2dr\right)^{\frac{q}{2}}+\sum_{j=1}^iC_3^j|g(0)|^q\,. 
\end{eqnarray*}
As a consequence, we have 
\begin{eqnarray*}
\Theta_{0, T, q}
&\leq&\sum_{i=1}^l \mathbb{E}\sup_{t_{l-i}\leq t\leq t_{l-i+1}}|Y_t|^q+l^{\frac{q-2}{2}}\sum_{i=1}^l\mathbb{E}\left(\int_{t_{l-i}}^{t_{l-i+1}}|Z_r|^2dr\right)^{\frac{q}{2}}\\
&\leq&C\left(\sum_{i=1}^lC_3^i\ \mathbb{E}|\xi|^q+\sum_{i=1}^l\sum_{j=1}^i C_3^j|g(0)|^q\right)\\
&&+C\sum_{i=1}^l\sum_{j=1}^iC_3^{i+1-j}\mathbb{E}\left(\int_{t_{l-j}}^{t_{l-j+1}}|f(r,0,0)|^2dr\right)^{\frac{q}{2}}\\
&=&C\left(\sum_{i=1}^lC_3^i\ \mathbb{E}|\xi|^q+\sum_{j=1}^l\sum_{i=j}^l C_3^j|g(0)|^q\right)\\
&&+C\sum_{j=1}^l\sum_{i=j}^lC_3^{i+1-j}\mathbb{E}\left(\int_{t_{l-j}}^{t_{l-j+1}}|f(r,0,0)|^2dr\right)^{\frac{q}{2}}\\
&\leq&C\sum_{i=1}^lC_3^i\left(\mathbb{E}|\xi|^q+\sum_{i=1}^l\mathbb{E}\left(\int_{t_{l-j}}^{t_{l-j+1}}|f(r,0,0)|^2dr\right)^{\frac{q}{2}}+l|g(0)|^q\right)\\
&\leq&C\sum_{i=1}^lC_3^i\left(\mathbb{E}|\xi|^q+l\mathbb{E}\left(\int_0^T|f(r,0,0)|^2dr\right)^{\frac{q}{2}}+l|g(0)|^q\right)\\
&\leq & K\left(\mathbb{E}|\xi|^q+\mathbb{E}\left(\int_0^T|f(r,0,0)|^2dr\right)^{\frac{q}{2}}+|g(0)|^q\right),
\end{eqnarray*}
which is the estimate \eqref{e.2.1}.}
\end{proof}

\begin{proof}[Proof of Corollary \ref{l.3.7}]
Without loss of generality  we assume $0\le s\le t\le T$. Let $C>0$ denote a generic constant independent of $s$ and $t$ but depending only 
on $L$, $q$, $T$ and the quantity $\mathbb{E}|\xi|^q+\mathbb{E}\left(\int_0^T|f(r,0,0)|^2dr\right)^{\frac{q}{2}}+|g(0)|^q$, which may vary from line to
line. Since
\begin{equation*}
Y_s=Y_t+\int_s^tf(r,Y_r,Z_r)dr+\int_s^tg(Y_s)d\overleftarrow{B}_r-\int_s^tZ_rdW_r,
\end{equation*}
we have, by the Lipschitz condition on $f$ and $g$,
\begin{eqnarray*}
&&  \mathbb{E}|Y_t-Y_s|^q \\
&=&\mathbb{E}\left|\int_s^tf(r,Y_r,Z_r)dr+\int_s^tg(Y_s)d\overleftarrow{B}_r-%
\int_s^tZ_rdW_r\right|^q \\
&\le&3^{q-1}\left(\mathbb{E}\left|\int_s^tf(r,Y_r,Z_r)dr\right|^q+\mathbb{E}\left|\int_s^tg(Y_s)d\overleftarrow{B}_r\right|^q+\mathbb{E}%
\left|\int_s^tZ_rdW_r\right|^q\right) \\
&\le&C\left(|t-s|^{\frac{q}{2}}\mathbb{E}\left(\int_s^t|f(r,Y_r,Z_r)|^2dr%
\right)^{\frac{q}{2}}+\mathbb{E}\left(\int_s^t|g(Y_s)|^2dr\right)^{\frac{q}{2}}+\mathbb{E}\left(\int_s^t|Z_r|^2dr\right)^{\frac{q}{2}}\right) \\
&\le&C\bigg\{|t-s|^{\frac{q}{2}}\bigg[\mathbb{E}\left(\int_s^t|Y_r|^2dr%
\right)^{\frac{q}{2}}+\mathbb{E}\left(\int_s^t|Z_r|^2dr\right)^{\frac{q}{2}} +\mathbb{E}\left(\int_s^t|f(r,0,0)|^2dr\right)^{\frac{q}{2}}\bigg]\\%
&&+|t-s|^{\frac{q}{2}}\left(|g(0)|^q+\mathbb{E}\sup_{0\leq r\leq T}|Y_r|^q\right)+|t-s|^{\frac{q}{2}}\sup_{0\le r\le T}\mathbb{E}|Z_r|^{q}\bigg\} \\
&\le&C|t-s|^{\frac{q}{2}}.
\end{eqnarray*}
The proof is completed.
\end{proof}

\setcounter{equation}{0}
\section{Proof of the results for linear BDSDEs}
We first study the properties of the process $\rho$ defined in \eqref{joint1}. The following lemmas will be needed to prove Theorems \ref{p-3-3} and \ref{T-3-4}.
\begin{lemma}\label{L-5-1}
Let the processes $\alpha$, $\beta$ and $\gamma$ satisfy the condition \textbf{(H1)}, and let $\rho$ be defined  by  \eqref{joint1}. Then the following properties are true:
\begin{itemize}
\item[(a)] For any $r\in \mathbb{R}$ we have $\mathbb{E}
\left( \sup\limits_{0\leq t\leq T}\rho_t^r\right) <\infty$;
\item[(b)] For any $r>0$ and $0\leq s\leq t\leq T$, we have 
\[
\mathbb{E}|\rho_t-\rho_s|^r\leq  C|t-s|^{\frac r2},
\]
   where $C$ is a positive constant which is independent of $s$ and $t$.
\end{itemize}
\end{lemma}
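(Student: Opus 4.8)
The plan is to prove Lemma \ref{L-5-1} by exploiting the explicit form of $\rho$ in \eqref{joint1}, writing it as the exponential of a sum of two martingale-type terms (one in $W$, one in the backward integral in $B$) plus a bounded drift term. The key observation is that although $\rho$ is not a solution of an SDE (because the $W$-integral and the backward $B$-integral live on incompatible filtrations), the exponent is still a genuine sum of stochastic integrals in each Brownian motion separately, so each factor can be analyzed by classical tools after conditioning. Concretely, I would write $\rho_t = \exp\{M_t^W + M_t^B + A_t\}$ where $M_t^W = \int_0^t \beta_s dW_s$, $M_t^B = \int_0^t \gamma_s d\overleftarrow{B_s}$, and $A_t = \int_0^t(\alpha_s - \tfrac12\beta_s^2 - \tfrac12\gamma_s^2)ds$. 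By condition \textbf{(H1)}, $|A_t| \le (L + L^2)T$ is uniformly bounded, so it contributes only a harmless multiplicative constant to every estimate.

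For part (a), the plan is to bound $\mathbb{E}\sup_{0\le t\le T}\rho_t^r$ for arbitrary real $r$. After absorbing the bounded drift $A_t$ into a constant, it suffices to control $\mathbb{E}\sup_t \exp\{r M_t^W + r M_t^B\}$. First I would condition on the $B$-Brownian motion: the backward integral $M_t^B$ is measurable with respect to $\mathcal{F}_{0,T}^B$, and since $\gamma$ is $\mathcal{F}$-adapted with $|\gamma|\le L$, the quadratic variation $\langle M^B\rangle_T \le L^2 T$ is bounded, whence $\sup_t \exp\{r M_t^B\}$ has finite moments of all orders by the standard sub-Gaussian bound for exponentials of bounded-quadratic-variation martingales (e.g.\ via Doob's maximal inequality applied to the exponential submartingale $\exp\{\lambda M_t^B - \tfrac{\lambda^2}{2}\langle M^B\rangle_t\}$). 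The same reasoning applied to $M^W$, using boundedness $|\beta|\le L$, controls $\sup_t\exp\{r M_t^W\}$. Finally a Cauchy--Schwarz (or Hölder) step separates the two exponentials so that $\mathbb{E}\sup_t\rho_t^r \le C\,(\mathbb{E}\sup_t e^{2rM_t^W})^{1/2}(\mathbb{E}\sup_t e^{2rM_t^B})^{1/2}<\infty$.

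For part (b), the plan is to use the elementary inequality $|e^x - e^y|\le (e^x + e^y)|x-y|$ together with Hölder's inequality, reducing the problem to estimating $\mathbb{E}\big[(\rho_t+\rho_s)^r \,|{\log\rho_t - \log\rho_s}|^r\big]$. Using part (a) to control the high moments of $\rho_t+\rho_s$ via Hölder, it then remains to show $\mathbb{E}|\log\rho_t - \log\rho_s|^{r'} \le C|t-s|^{r'/2}$ for a suitable exponent $r'$. But $\log\rho_t - \log\rho_s = \int_s^t\beta_u dW_u + \int_s^t\gamma_u d\overleftarrow{B_u} + \int_s^t(\alpha_u - \tfrac12\beta_u^2 - \tfrac12\gamma_u^2)du$; the two stochastic integrals are handled by the Burkholder--Davis--Gundy inequality together with the boundedness of $\beta,\gamma$, giving increments of order $|t-s|^{1/2}$, while the Lebesgue integral is of order $|t-s|$, which is dominated by $|t-s|^{1/2}$ on $[0,T]$.

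The main obstacle I anticipate is the handling of the mixed filtration structure, since $\rho$ satisfies no stochastic differential equation and the usual Itô-calculus shortcuts are unavailable; the resolution is to treat the $W$-part and the backward $B$-part as genuine (backward) Itô integrals whose integrands are bounded, and to keep them separated by conditioning and Hölder so that the exponential-moment and BDG estimates apply to each piece independently. Once that separation is in place, both parts follow from standard martingale moment inequalities and the elementary Lipschitz bound on the exponential function.
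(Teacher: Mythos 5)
Your proposal is correct and follows essentially the same route as the paper: for part (b) the paper writes $\rho_t-\rho_s=\rho_s e^{\eta}\,\delta_{s,t}$ by a first-order Taylor expansion and bounds $e^{\eta}\le 1+e^{\delta_{s,t}}$, which is exactly your mean-value inequality $|e^x-e^y|\le(e^x+e^y)|x-y|$, followed by H\"older, part (a) and Burkholder--Davis--Gundy applied to the increment of the exponent; part (a) is only cited there (to Lemma 2.4 of \cite{HNS10}) and your exponential-submartingale argument is the standard way to fill it in. One inessential slip: $M_t^B=\int_0^t\gamma_s d\overleftarrow{B}_s$ is \emph{not} $\mathcal{F}_{0,T}^B$-measurable since $\gamma$ depends on $W$ as well, but your actual estimate never uses that claim --- it relies only on the boundedness of the quadratic variation, Doob's inequality for the (time-reversed) exponential submartingale, and Cauchy--Schwarz, all of which are fine.
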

\begin{proof}
The proof of the part (a)  is analogous  to that  of Lemma 2.4 in \cite{HNS10}.  The only difference is  the backward integral with respect to $d\overleftarrow{B}_s$. For this we just 
need to reverse the time from $T$ to $0$. We  omit the details   here.

Part (b):  {  Using Taylor's expansion for the function $h(x)=e^x$ up to the first order, we have
 \begin{eqnarray}\label{e-5-11-1}
\rho_t-\rho_s = \rho_{s}e^\eta  \left(  \int_{s}^{t} \gamma_r d\overleftarrow{B}_r +  \int_{s}^{t}  \beta_r dW_r +  \int_{s}^{t} \left (\alpha_r -\frac 12 \beta_r^2 -\frac 12 \gamma_r^2\right)dr\right)= \rho_{s}\delta_{s,t}e^\eta ,
 \end{eqnarray}
for some   random variable $\eta$  between $0$ and $\delta_{s,t}$,
where 
\[
\delta_{s,t}=\int_{s}^{t} \gamma_r d\overleftarrow{B}_r +  \int_{s}^{t}  \beta_r dW_r +  \int_{s}^{t} \left (\alpha_r -\frac 12 \beta_r^2 -\frac 12 \gamma_r^2\right)dr\,.
\]

    Using part (a) and H\"{o}lder's inequality we obtain, for any $p>0$,
\begin{eqnarray}\label{e-5-11-2}
\mathbb{E}e^{p\eta}  &=& \mathbb{E}\left(e^{p\eta}  1_{\{\eta\leq 0\}}\right)+\mathbb{E}\left(e^{p\eta}  1_{\{\eta>0\}}\right)
\le  1+\mathbb{E}\left(e^{p\delta_{s,t}}1_{\{\eta>0\}}\right)\nonumber\\
&\leq& 1+\mathbb{E}\left(\sup_{0\leq t\leq T}\rho_t^p\sup_{0\leq s\leq T}\rho_s^{-p}1_{\{\eta>0\}}\right)\le 
 1+\mathbb{E}\left(\sup_{0\leq t\leq T}\rho_t^p\sup_{0\leq s\leq T}\rho_s^{-p}\right)<\infty.
\end{eqnarray}
Thus, by \eqref{e-5-11-1}, \eqref{e-5-11-2},} part (a), H\"{o}lder's inequality, Burkholder-Davis-Gundy inequality and the boundedness of the processes $\alpha$, $\beta$ and $\gamma$, we can show the result in part (b) as follows
  \begin{eqnarray} \nonumber
  \mathbb{E}|\rho_t-\rho_s|^r&\leq &\left(\mathbb{E}(\rho_s^{2r}e^{2r\eta})\right)^{\frac{1}{2}}\\
  &&\times  \left(\mathbb{E}\left| \int_{s}^{t} \gamma_r d\overleftarrow{B}_r +  \int_{s}^{t}  \beta_r dW_r +  \int_{s}^{t} \left (\alpha_r -\frac 12 \beta_r^2 -\frac 12 \gamma_r^2\right)dr\right|^{2r}\right)^{\frac{1}{2}}\nonumber\\
  &\leq& C\left(\mathbb{E}\left| \int_{s}^{t} \gamma_r d\overleftarrow{B}_r \right|^{2r}+\mathbb{E}\left| \int_{s}^{t} \beta_r dW_r \right|^{2r}+|t-s|^{2r}\right)^{\frac{1}{2}}\nonumber\\
  &\leq& C\left(\mathbb{E}\left| \int_{s}^{t} \gamma_r^2 dr \right|^{r}+\mathbb{E}\left| \int_{s}^{t} \beta_r ^2dr \right|^{r}+|t-s|^{2r}\right)^{\frac{1}{2}}\nonumber\\
  &\leq& C |t-s|^{\frac{r}{2}},\nonumber
  \end{eqnarray}
  where $C$  is a generic constant { independent of $s$ and $t$}.
\end{proof}

Part (b) of Lemma  \ref{L-5-1} implies that for any $0<\varepsilon <\frac 12$ there exists a random variable $G_\varepsilon$ which has moments of all orders, such that, for any $s,t\in [0,T]$,
\begin{equation}  \label{eq5}
|\rho_t -\rho_s | \le G_\varepsilon |t-s|^{\varepsilon}.
\end{equation}

\begin{proof}[Proof of Theorem \hbox{\ref{p-3-3}}] The existence and uniqueness of a solution $(Y,Z)$ to  equation \eqref{lbdsde} follows from Theorem \ref{T.2.1}. Moreover, the solution pair $(Y,Z)$ satisfies the estimate \eqref{e.2.1}.

Now we  are going to show formula \eqref{e-3-5}. Since the process  $\rho$ does not satisfy a stochastic differential equation, this formula cannot be deduced from a version of It\^o's formula for forward and backward stochastic integrals and we need to show this formula by a suitable approximation argument. 
For any $t<T$, we introduce a sequence of  partitions $\pi^n=\{ t=t^n_0<t^n_1 < \dots< t^n_{n} =T\}$  such that 
\[
\lim_{n \to \infty} |\pi^n| =0\,,  
\]
and there exists a constant $K$ such that for all $n$,
\begin{equation}\label{partition-req}
n|\pi^n|\leq K,
\end{equation}
where $|\pi^n|=\max\limits_{0\leq i \leq n-1}\{t^n_{i+1}-t^n_i\}$.  To simplify the notation we omit the superindex  in the partition points and we simply write $t^n_i =t_i$.
 
 Consider the decomposition
 \begin{eqnarray}\label{e-5-11}
 Y_T \rho_T -Y_t \rho_t &=&\sum_{i=0}^{n-1}  \left(Y_{t_{i+1}} \rho_{t_{i+1}}-Y_{t_i} \rho_{t_i} \right)\nonumber\\
 &=& \sum_{i=0}^{n-1}  \left( (Y_{t_{i+1}}-Y_{t_i} )\rho_{t_i}+Y_{t_{i+1}} (\rho_{t_{i+1}} -\rho_{t_i} ) \right)=: A_n+B_n.
 \end{eqnarray}
 For  the term $A_n$ we can write
  \begin{eqnarray}
 A_n&=&\sum_{i=0}^{n-1} \rho_{t_i} 
 \left( \int_{t_i}^{t_{i+1}} (-f_r-\alpha_r Y_r -\beta_rZ_r)dr - \int_{t_i}^{t_{i+1}} \gamma_r Y_r d\overleftarrow{B}_r 
 +\int_{t_i}^{t_{i+1}}  Z_r dW_r \right)\nonumber\\
 &=&A_{n,0}  +  A_{n,1}, \label{e.def-An} 
  \end{eqnarray}
  where
 \begin{equation}
 A_{n,0}= -\sum_{i=0}^{n-1} \rho_{t_i}   \int_{t_i}^{t_{i+1}} \gamma_r Y_r d\overleftarrow{B}_r\, \label{e.def-A-0},
 \end{equation}
 and
 \begin{equation}
  A_{n,1}=\sum_{i=0}^{n-1} \rho_{t_i}\left(\int_{t_i}^{t_{i+1}} (-f_r-\alpha_r Y_r -\beta_rZ_r)dr
 +\int_{t_i}^{t_{i+1}}  Z_r dW_r \right).
\end{equation}
  As we shall see that $A_{n,0}$ will be canceled by a term to be introduced later. As for $A_{n,1}$ it holds that
  \begin{equation}
  \lim_{n\rightarrow \infty} A_{n,1}= \int_t^T(-f_r - \alpha_r Y_r -\beta_rZ_r)\rho_rdr +\int_t^TZ_r  \rho_rdW_r, \ \mbox{in}\ L^1(\Omega). \label{e.an1-limit} 
  \end{equation}
  Indeed, the convergence of the Lebesgue integral follows from the continuity of $\rho $ (see \eqref{eq5}) and the integrability properties of the processes  $f$, $\alpha Y$ and  $\beta Z$. For the stochastic integral, taking into account that  $\rho$ is $\mathcal{G}$-adapted, the process $Z$ is in $H_{\mathcal{G}}^2([0,T])$ due to \eqref{e.2.1}.  Using the Burkholder-Davis-Gundy inequality and inequality \eqref{eq5}, we obtain
 {  \begin{eqnarray*}
  \mathbb{E} \left|  \sum_{i=0}^{n-1} \int_{t_i}^{t_{i+1}}  Z_r( \rho_{t_i} -\rho_r) dW_r \right|& \leq&
   \mathbb{E}   \left| \sum_{i=0}^{n-1} \int_{t_i}^{t_{i+1}}  Z^2_r( \rho_{t_i} -\rho_r)^2 dr \right|^{\frac 12} \\
  &  \le&  \left( \mathbb{E}  \int_0^T Z_r^2 dr \right)^{\frac12}\left( \mathbb{E} G_{\varepsilon}\right)^{\frac12} |\pi_n|^{\varepsilon},
    \end{eqnarray*}}
    for any $\varepsilon \in (0,\frac 12)$.  This proves \eqref{e.an1-limit}.  
  
  Next, let us consider the term $B_n$. Using Taylor's expansion up to the second order, $\rho_{t_{i+1}} - \rho_{t_i} $ can be decomposed as follows
\[
 \rho_{t_{i+1}} - \rho_{t_i} = \rho_{t_i}  \left(  \Psi_i  +\frac 12  \Psi_i^2 + R_i \right),
\]
  where
  \[
  \Psi_i= \int_{t_i}^{t_{i+1}} \gamma_r d\overleftarrow{B}_r +  \int_{t_i}^{t_{i+1}}  \beta_r dW_r +  \int_{t_i}^{t_{i+1}} \left (\alpha_r -\frac 12 \beta_r^2 -\frac 12 \gamma_r^2\right)dr 
  \]
  and the residual term $R_i$ has the form  $
  R_i=\frac 16 \Psi_i^3 e^\eta$, 
  with $\eta$ between $0$ and $ \Psi_i$. It is easy to show that
  \begin{eqnarray*}
 && \sum_{i=0}^{n-1} Y_{t_{i+1}} \rho_{t_i} \Bigg(
  \frac 12  \left( \int_{t_i}^{t_{i+1}} \left (\alpha_r -\frac 12 \beta_r^2 -\frac 12 \gamma_r^2\right)dr \right)^2 \\
  && \quad+
  \left( \int_{t_i}^{t_{i+1}} \left (\alpha_r -\frac 12 \beta_r^2 -\frac 12 \gamma_r^2 \right) dr\right)
  \left(\int_{t_i}^{t_{i+1}} \gamma_r d\overleftarrow{B}_r +  \int_{t_i}^{t_{i+1}}  \beta_r dW_r \right) 
  +R_i \Bigg)
  \end{eqnarray*}
  converges in probability to zero as $n$ tends to infinity. Therefore,
  \[
  \lim_{n\rightarrow \infty } B_n=   \lim_{n\rightarrow \infty } B'_n
  \]
  in probability, where
 \begin{eqnarray}
 B'_n&=&\sum_{i=0}^{n-1}   Y_{t_{i+1}}  \rho_{t_i} \Psi_i
 +\frac 12 \sum_{i=0}^{n-1} Y_{t_{i+1}}\rho_{t_i}  \left(  \int_{t_i}^{t_{i+1}} \gamma_r d\overleftarrow{B}_r +  \int_{t_i}^{t_{i+1}}  \beta_r dW_r  \right)^2\nonumber\\
 &=&\sum_{i=0}^{n-1}Y_{t_{i+1}}\rho_{t_i} \int_{t_i}^{t_{i+1}} \gamma_r d\overleftarrow{B}_r +\sum_{i=0}^{n-1}Y_{t_{i}}\rho_{t_i} \int_{t_i}^{t_{i+1}} \beta_r dW_r+\sum_{i=0}^{n-1}(Y_{t_{i+1}}-Y_{t_i})\rho_{t_i} \int_{t_i}^{t_{i+1}} \beta_r dW_r\nonumber\\
 &&+\sum_{i=0}^{n-1}Y_{t_{i+1}}\rho_{t_i}\int_{t_i}^{t_{i+1}}\left(\alpha_r-\frac{1}{2}\beta_r^2\right)dr+\frac{1}{2}\sum_{i=0}^{n-1}Y_{t_{i+1}}\rho_{t_i}\left[\left(\int_{t_i}^{t_{i+1}}\gamma_rd\overleftarrow{B}_r\right)^2- \int_{t_i}^{t_{i+1}} \gamma_r^2dr\right]\nonumber\\
 &&+\frac{1}{2}\sum_{i=0}^{n-1}Y_{t_{i}}\rho_{t_i}\left(\int_{t_i}^{t_{i+1}}\beta_rdW_r\right)^2+\frac{1}{2}\sum_{i=0}^{n-1}\left(Y_{t_{i+1}}-Y_{t_i}\right)\rho_{t_i}\left(\int_{t_i}^{t_{i+1}}\beta_rdW_r\right)^2\nonumber\\
 &&+\sum_{i=0}^{n-1}Y_{t_{i+1}}\rho_{t_i}\int_{t_i}^{t_{i+1}}\gamma_rd\overleftarrow{B}_r\int_{t_i}^{t_{i+1}}\beta_rdW_r\nonumber\\
 &=:&B_{n,1}+B_{n,2}+B_{n,3}+B_{n,4}+B_{n,5}+B_{n,6}+B_{n,7}+B_{n,8}.
 \end{eqnarray}
 We are going to analyze the asymptotic behavior of  each term in the above decomposition. 
First, one can easily show  that the  following limits hold true:
 \begin{eqnarray}
 \lim\limits_{n\to\infty}B_{n,2}&=&\int_t^T\beta_rY_r\rho_rdW_r, \quad  \mbox{in}\ L^1(\Omega),\\
 \lim\limits_{n\to\infty}B_{n,4}&=&\int_t^T\left(\alpha_rY_r\rho_r-\frac{1}{2}\beta_r^2Y_r\rho_r\right)dr,\quad \mbox{a.s.},\\
 \lim\limits_{n\to\infty}B_{n,6}&=&\frac{1}{2}\int_t^T\beta_r^2Y_r\rho_rdr,\quad  \mbox{in}\ L^1(\Omega).
\end{eqnarray}
Using the fact that 
 \[
 \mathbb{E}\left(\max_{0\leq i\leq n-1}|Y_{t_{i+1}}-Y_{t_i}|^2\right)\to 0, \ \mbox{as}\ n\to \infty,
 \]
 which can be proved by the estimate \eqref{e.2.1} and the dominated convergence theorem, and also using part (a) in Lemma \ref{L-5-1},
we can show that 
\begin{eqnarray}
\mathbb{E}|B_{n,7}|&\leq&\frac{1}{2}\mathbb{E}\left(\max_{0\leq i\leq n-1}|Y_{t_{i+1}}-Y_{t_i}|\sup_{0\leq t\leq T}\rho_t\sum_{i=0}^{n-1}\left(\int_{t_i}^{t_{i+1}}\beta_rdW_r\right)^2\right)\nonumber\\  \nonumber
&\leq&\frac{1}{2}\left( \mathbb{E}\left(\max_{0\leq i\leq n-1}|Y_{t_{i+1}}-Y_{t_i}|^2\right)\right)^{\frac{1}{2}}\left( \mathbb{E}\sup_{0\leq t\leq T}\rho_t^{4}\right)^{\frac 14 } \\
&&
\times \left(\mathbb{E}\left[\sum_{i=0}^{n-1}\left(\int_{t_i}^{t_{i+1}}\beta_rdW_r\right)^2\right]^4\right)^{\frac{1}{4}}\to 0,\nonumber\\
\end{eqnarray}
as $n\to\infty$, since $\lim\limits_{n\to \infty}\mathbb{E}\left[\sum_{i=0}^{n-1}\left(\int_{t_i}^{t_{i+1}}\beta_rdW_r\right)^2\right]^4=\mathbb{E}\left(\int_t^T\beta_r^2dr\right)^4 $.

The term $B_{n,1}$ can be further decomposed as follows
\begin{eqnarray}
B_{n,1}&=&\sum_{i=0}^{n-1} \rho_{t_i}   \int_{t_i}^{t_{i+1}} \gamma_r Y_r d\overleftarrow{B}_r+\sum_{i=0}^{n-1}\rho_{t_i}\left[Y_{t_{i+1}}\int_{t_i}^{t_{i+1}} \gamma_r  d\overleftarrow{B}_r-\int_{t_i}^{t_{i+1}} \gamma_r Y_r d\overleftarrow{B}_r\right]\nonumber\\
&=:& -A_{n,0}+B_{n,1,1},
\end{eqnarray}
where $A_{n,0}$  is defined by \eqref{e.def-A-0}, which 
 is canceled with the term in \eqref{e.def-An}.  
Next, we will show the following two limits hold in $L^1(\Omega)$:
\begin{equation}
\lim\limits_{n\to\infty}B_{n,1,1}= \lim\limits_{n\to\infty}\sum_{i=0}^{n-1}\rho_{t_i}\left[Y_{t_{i+1}}\int_{t_i}^{t_{i+1}} \gamma_r  d\overleftarrow{B}_r-\int_{t_i}^{t_{i+1}} \gamma_r Y_r d\overleftarrow{B}_r\right]=0, \label{e-5-22}
\end{equation}
and
\begin{equation}
\lim\limits_{n\to\infty}B_{n,5}= \lim\limits_{n\to\infty}\frac{1}{2}\sum_{i=0}^{n-1}Y_{t_{i+1}}\rho_{t_i}\left[\left(\int_{t_i}^{t_{i+1}}\gamma_rd\overleftarrow{B}_r\right)^2- \int_{t_i}^{t_{i+1}} \gamma_r^2dr\right] =0.  \label{e-5-23}
\end{equation}

\noindent
{\it Proof of the convergence \eqref{e-5-22}}:
For any positive integer $m<n$, consider  the partition  $\pi^m=\{ t=t_0^m<t^m_1<\dots<t_m^m=T\}$. Define for each $i=0, 1, \dots, n$,
\[
\tau_i=\max\{t_j^m: t_j^m\leq t_i\}\ \mbox{and}\ \sigma_i=\min\{t_j^m: t_j^m>t_i\}.
\]
Then, we can rewrite $B_{n,1,1}$ as follows
\begin{eqnarray}\label{e-5-24}
B_{n,1,1}&=&\sum_{i=0}^{n-1}\rho_{\tau_i}\left[Y_{t_{i+1}}\int_{t_i}^{t_{i+1}} \gamma_r  d\overleftarrow{B}_r-\int_{t_i}^{t_{i+1}} \gamma_r Y_r d\overleftarrow{B}_r\right]\nonumber\\
&&+\sum_{i=0}^{n-1}(\rho_{t_i}-\rho_{\tau_i})\left[Y_{t_{i+1}}\int_{t_i}^{t_{i+1}} \gamma_r  d\overleftarrow{B}_r-\int_{t_i}^{t_{i+1}} \gamma_r Y_r d\overleftarrow{B}_r\right]\nonumber\\
&=&\sum_{j=0}^{m-1}\rho_{t_j^m}\sum_{i, t_j^m\leq t_i<t_{j+1}^m}\left[Y_{t_{i+1}}\int_{t_i}^{t_{i+1}} \gamma_r  d\overleftarrow{B}_r-\int_{t_i}^{t_{i+1}} \gamma_r Y_r d\overleftarrow{B}_r\right]\nonumber\\
&&+\sum_{i=0}^{n-1}(\rho_{t_i}-\rho_{\tau_i})\left[Y_{t_{i+1}}\int_{t_i}^{t_{i+1}} \gamma_r  d\overleftarrow{B}_r-\int_{t_i}^{t_{i+1}} \gamma_r Y_r d\overleftarrow{B}_r\right]\nonumber\\
&=:&B_{n,m, 1}+B_{n,m, 2}.
\end{eqnarray}
If $m$ is fixed, then  for each $j=0,1,\dots, m-1$ we have
\[
\lim _{n\rightarrow \infty} \sum_{i: t_j^m\leq t_i<t_{j+1}^m}\left[Y_{t_{i+1}}\int_{t_i}^{t_{i+1}} \gamma_r  d\overleftarrow{B}_r-\int_{t_i}^{t_{i+1}} \gamma_r Y_r d\overleftarrow{B}_r\right] =0,
\]
in $L^2(\Omega)$, which implies
\begin{equation}\label{e-5-25}
\lim _{n\rightarrow \infty}  B_{n, m, 1}=  0, 
\end{equation}
in $L^2(\Omega)$, for each fixed $m$. 
 
For the term $B_{n,m,2}$, using   H\"{o}lder's inequality, the estimate \eqref{eq5} and the isometry of backward It\^o stochastic integrals, we can write
\begin{eqnarray} \label{Jan-17-1}
\mathbb{E}|B_{n,m,2}|&\leq & \mathbb{E}\left(\max_{0\leq i\leq n-1}|\rho_{t_i}-\rho_{\tau_i}|\sum_{i=0}^{n-1}\left|Y_{t_{i+1}}\int_{t_i}^{t_{i+1}} \gamma_r  d\overleftarrow{B}_r-\int_{t_i}^{t_{i+1}} \gamma_r Y_r d\overleftarrow{B}_r\right|\right)\nonumber\\
&\leq &\left(\mathbb{E}\max_{0\leq i\leq n-1}|\rho_{t_i}-\rho_{\tau_i}|^2\right)^{\frac{1}{2}}\left(\mathbb{E}\left(\sum_{i=0}^{n-1}\left|Y_{t_{i+1}}\int_{t_i}^{t_{i+1}} \gamma_r  d\overleftarrow{B}_r-\int_{t_i}^{t_{i+1}} \gamma_r Y_r d\overleftarrow{B}_r\right|\right)^2\right)^{\frac{1}{2}}\nonumber\\
&\leq&|\pi^m|^\varepsilon{  \left( \mathbb{E} G_\varepsilon^2\right)^{\frac12}  }
\left(n\mathbb{E}\sum_{i=0}^{n-1}\left|Y_{t_{i+1}}\int_{t_i}^{t_{i+1}} \gamma_r  d\overleftarrow{B}_r-\int_{t_i}^{t_{i+1}} \gamma_r Y_r d\overleftarrow{B}_r\right|^2\right)^{\frac{1}{2}}\nonumber\\
&=&|\pi^m|^\varepsilon  { \left( \mathbb{E} G_\varepsilon^2\right)^{\frac12} } \left(n\mathbb{E}\sum_{i=0}^{n-1}\int_{t_i}^{t_{i+1}}\left|\gamma_r(Y_{t_{i+1}}-Y_r)\right|^2dr\right)^{\frac{1}{2}}\nonumber\\
&\le& L |\pi^m|^\varepsilon   { \left( \mathbb{E} G_\varepsilon^2\right)^{\frac12}} \left(n \mathbb{E}\sum_{i=0}^{n-1}\int_{t_i}^{t_{i+1}}|Y_{t_{i+1}}-Y_r|^2dr\right)^{\frac{1}{2}}.
\end{eqnarray}
We are going to make use of the  following estimate for the linear equation \eref{lbdsde} over the interval $[s,t]\subseteq [0,T]$
\begin{equation}\label{Jan-17-2}
\mathbb{E}|Y_t-Y_s|^2\leq C\left(|t-s|+\int_s^t|Z_r|^2dr\right),\quad 0\leq s\leq t\leq T,
\end{equation}
 which can be easily proved by the boundedness of the coefficients $\alpha,\,\beta$ and $\gamma$, and the estimate \eqref{e.2.1} for the linear equation \eqref{lbdsde}.
 From \eqref{Jan-17-1}, \eqref{Jan-17-2} and \eqref{partition-req}, we obtain
 \begin{eqnarray}
\mathbb{E}|B_{n,m,2}|&\leq &
 C    |\pi^m|^\varepsilon   { \left( \mathbb{E} G_\varepsilon^2\right)^{\frac12}}
 \left(n\left(|\pi^n |+\mathbb{E}\sum_{i=0}^{n-1}\int_{t_i}^{t_{i+1}}\int_r^{t_{i+1}}|Z_u|^2dudr\right)\right)^{\frac{1}{2}}\nonumber\\
&\leq&  C    |\pi^m|^\varepsilon   { \left( \mathbb{E} G_\varepsilon^2\right)^{\frac12}}   \left(n|\pi^n|\left(1+\mathbb{E}\int_t^T|Z_u|^2du\right)\right)^{\frac{1}{2}}\nonumber\\
&\leq& C       |\pi^m|^\varepsilon   { \left( \mathbb{E} G_\varepsilon^2\right)^{\frac12}} \left(1+\mathbb{E}\int_t^T|Z_u|^2du\right)^{\frac{1}{2}}, \label{e-5-26}
\end{eqnarray}
which converges to $0$ 
as $m\to\infty$, uniformly in $n$.   Thus, the limit \eqref{e-5-22} follows from \eqref{e-5-24}, \eqref{e-5-25} and \eqref{e-5-26}.

\noindent
{\it Proof of the convergence \eqref{e-5-23}}:
Similarly, we write $B_{n,5}$ as
\begin{eqnarray}\label{e-5-27}
B_{n,5}&=&\frac{1}{2}\sum_{i=0}^{n-1}Y_{\sigma_{i+1}}\rho_{\tau_i}\left[\left(\int_{t_i}^{t_{i+1}}\gamma_rd\overleftarrow{B}_r\right)^2- \int_{t_i}^{t_{i+1}} \gamma_r^2dr\right]\nonumber\\
&&+\frac{1}{2}\sum_{i=0}^{n-1}\left(Y_{t_{i+1}}\rho_{t_i}-Y_{\sigma_{i+1}}\rho_{\tau_i}\right)\left[\left(\int_{t_i}^{t_{i+1}}\gamma_rd\overleftarrow{B}_r\right)^2- \int_{t_i}^{t_{i+1}} \gamma_r^2dr\right]\nonumber\\
&=&\frac{1}{2}\sum_{j=0}^{m-1}\rho_{t_j^m} Y_{t_{j+1}^m} \sum_{i, t_j^m\leq t_i<t_{j+1}^m} \left[\left(\int_{t_i}^{t_{i+1}}\gamma_rd\overleftarrow{B}_r\right)^2- \int_{t_i}^{t_{i+1}} \gamma_r^2dr\right]\nonumber\\
&&+\frac{1}{2}\sum_{i=0}^{n-1}\left(Y_{t_{i+1}}(\rho_{t_i}-\rho_{\tau_i})+(Y_{t_{i+1}}-Y_{\sigma_{i+1}})\rho_{\tau_i}\right)\left[\left(\int_{t_i}^{t_{i+1}}\gamma_rd\overleftarrow{B}_r\right)^2- \int_{t_i}^{t_{i+1}} \gamma_r^2dr\right]\nonumber\\
&=:&B_{n,m,5,1}+B_{n,m,5,2}.
\end{eqnarray}
For any fixed $m$, we have, for each $j=0,1,\dots, m-1$,
\[
\lim_{n \to \infty} \sum_{i, t_j^m\leq t_i<t_{j+1}^m}\left[\left(\int_{t_i}^{t_{i+1}}\gamma_rd\overleftarrow{B}_r\right)^2- \int_{t_i}^{t_{i+1}} \gamma_r^2dr\right]=0,
\] 
in $L^2(\Omega)$, which implies
\begin{equation}\label{e-5-28}
\lim_{n \to \infty} B_{n, m, 5,1}=0
\end{equation}
in $L^1(\Omega)$, for each fixed $m$.

For the term $B_{n,m,5,2}$, appying Cauchy-Schwarz inequality, we can write
\begin{eqnarray*}
\mathbb{E}|B_{n,m,5,2}|
&\leq&\frac{1}{2}\mathbb{E}\Bigg(\max_{0\leq i\leq n-1}\{|Y_{t_{i+1}}||\rho_{t_i}-\rho_{\tau_i}|+|Y_{t_{i+1}}-Y_{\sigma_{i+1}}||\rho_{\tau_i}|\}\\
&&\times
\sum_{i=0}^{n-1}\left|\left(\int_{t_i}^{t_{i+1}}\gamma_rd\overleftarrow{B}_r\right)^2- \int_{t_i}^{t_{i+1}} \gamma_r^2dr\right|\Bigg)\\
&\leq&\frac{1}{2}\left(\mathbb{E}\left(\max_{0\leq i\leq n-1}\{|Y_{t_{i+1}}||\rho_{t_i}-\rho_{\tau_i}|+|Y_{t_{i+1}}-Y_{\sigma_{i+1}}||\rho_{\tau_i}|\}\right)^2\right)^{\frac{1}{2}}\\
&&\times\left(\mathbb{E}\left|\sum_{i=0}^{n-1}\left[\left(\int_{t_i}^{t_{i+1}}\gamma_rd\overleftarrow{B}_r\right)^2- \int_{t_i}^{t_{i+1}} \gamma_r^2dr\right]\right|^2\right)^{\frac{1}{2}}\\
&=:&{  \frac 12 \left( B_{n,m,5,3} \cdot  B_{n,m,5,4} \right)}
\end{eqnarray*}
Clearly, the term $ B_{n,m,5,4}$  is uniformly bounded by a constant  depending only on the constant in  Burkholder's inequality and the bound  of the process $\gamma$. On the other hand,  $ B_{n,m,5,3}$ converges to zero as $m$ tends to infinity, uniformly in $n$. 
 Hence,  we   proved that 
 { \begin{equation}\label{e-5-29}
\lim_{m\to \infty}\mathbb{E}|B_{n,m,5,2}|=0,
 \end{equation}
 uniformly in $n$.}
  Therefore, the limit \eqref{e-5-23} can be proved by \eqref{e-5-27}, \eqref{e-5-28} and \eqref{e-5-29}.

Finally, we consider the remaining terms $B_{n,3}$ and $B_{n,8}$ together. Using the equation satisfied by $Y$  we can write  
 \begin{eqnarray}\label{e-5-30}
 B_{n,3}+B_{n,8}&=& \sum_{i=0}^{n-1}\rho_{t_i} \left( \int_{t_i}^{t_{i+1}} (-f_r-\alpha_r Y_r -\beta_rZ_r)dr 
 +\int_{t_i}^{t_{i+1}}  Z_r dW_r \right)\int_{t_i}^{t_{i+1}}\beta_rdW_r\nonumber\\
 &&+ \sum_{i=0}^{n-1} \rho_{t_i}\left[Y_{t_{i+1}}\int_{t_i}^{t_{i+1}} \gamma_r d\overleftarrow{B}_r-\int_{t_i}^{t_{i+1}} \gamma_r Y_r d\overleftarrow{B}_r \right]\int_{t_i}^{t_{i+1}}\beta_rdW_r\nonumber\\
 &=:&B_{n,3,1}+B_{n,3,2}. 
 \end{eqnarray}
 From the adaptedness of the process $\rho$ to the filtration $\mathcal{G}$ and the classical It\^{o} calculus,  together with the estimates proved in 
 Lemma   \ref{L-5-1}, we can show that
 \begin{equation}\label{e-5-31}
  \lim\limits_{n\to\infty}B_{n,3,1}=\int_t^T\beta_r\rho_rZ_rdr,\ \mbox{in}\ L^1(\Omega).
 \end{equation}
For the term $B_{n,3,2}$,  we have the estimate
\[
|B_{n,3,2}| \le  \sup_{0\leq t\leq T}\rho_t   \left(  \sum_{i=0}^{n-1}\left(\int_{t_i}^{t_{i+1}}\gamma_r(Y_{t_{i+1}}-Y_r)d\overleftarrow{B}_r\right)^2
\right)^{\frac 12}   \left(\sum_{i=0}^{n-1}\left(\int_{t_i}^{t_{i+1}}\beta_rdW_r\right)^2\right)^{\frac{1}{2}}.
\]
The factors $\sup_{0\leq t\leq T}\rho_t$ and  $ \left(\sum_{i=0}^{n-1}\left(\int_{t_i}^{t_{i+1}}\beta_rdW_r\right)^2\right)^{\frac{1}{2}}$ have moments of all orders uniformly bounded in $n$. Therefore, in order to show that
 \begin{equation}\label{e-5-32}
  \lim\limits_{n\to\infty} \mathbb{E} |B_{n,3,2}|=0,
 \end{equation}
 it suffices to prove that
 \[
   \lim\limits_{n\to\infty} \mathbb{E}   \sum_{i=0}^{n-1}\left(\int_{t_i}^{t_{i+1}}\gamma_r(Y_{t_{i+1}}-Y_r)d\overleftarrow{B}_r\right)^2=0,
   \]
   which follows from the following estimate
   \begin{eqnarray*}
   \mathbb{E}   \sum_{i=0}^{n-1}\left(\int_{t_i}^{t_{i+1}}\gamma_r(Y_{t_{i+1}}-Y_r)d\overleftarrow{B}_r\right)^2 &=&
  \mathbb{E}   \sum_{i=0}^{n-1}\int_{t_i}^{t_{i+1}}\gamma_r^2(Y_{t_{i+1}}-Y_r)^2 dr \\
  &\le& L^2 T  \mathbb{E} \left( \sup_{|r-s| \le |\pi^n|} |Y_s-Y_r|^2\right).
   \end{eqnarray*}
From \eqref{e-5-30}, \eqref{e-5-31} and \eqref{e-5-32}, we deduce  that 
\begin{equation}\label{e-5-33}
B_{n,3}+B_{n,8}\to \int_t^T\beta_r\rho_rZ_rdr,\ \mbox{in} \ L^1(\Omega), \ \mbox{as}\ n\to\infty.
\end{equation}
Therefore, from \eqref{e-5-11}-\eqref{e-5-23} and \eqref{e-5-33}, we can prove \eqref{e-3-5}, and hence, \eqref{e-3-6}.
\end{proof}

\begin{proof}[Proof of Theorem \ref{T-3-4}]
The existence and uniqueness of a solution
has been shown in Theorem \hbox{\ref{T.2.1}}, where the estimate  \eqref{e.2.1}  is also proved. On the other hand, we have proved the  following explicit representation for  the process $Y$ (see \eqref{e-3-6}) 
\[
Y_t=\rho_t^{-1}\mathbb{E}\left(\xi\rho_T+\left. \int_t^T\rho_sf_sds\right | \mathcal{G}_t\right)= \mathbb{E}\left( \xi \rho _{t,T}+\left.\int_{t}^{T}\rho _{t,r}f_{r}dr\right|\mathcal{G%
}_{t}\right),
\]
where $\rho _{t,r}=\rho _{t}^{-1}\rho _{r}$ for any $0\le t\leq r\leq T$. For any  $t\in [0,T]$, denote $$\delta _{t}=\rho _{t}^{-1}=\exp\left\{-\int_0^t\beta_sdW_s-\int_0^t\gamma_sd\overleftarrow{B_s}-\int_0^t\left(\alpha_s-\frac{1}{2}\beta^2_s-\frac{1}{2}\gamma_s^2\right)ds\right\}.$$
Then, $\rho_{t,r}=\delta_t\rho_r$ with $0\leq t\leq r\leq T$.

For any $0\leq s\leq t\leq T$ and any positive number $r\geq 1$,
applying Lemma \ref{L-5-1} to the
process $\{\delta _{t}\}_{0\le t\le T}$, we have
\[
\mathbb{E}\sup_{0\leq t\leq T}\delta_t^r\leq C,
\]
and
\[
 \mathbb{E}|\delta _{t}-\delta _{s}|^{r}\leq C(t-s)^{\frac{r}{2}}, 
\]%
where $C$ is a positive constant depending only on $L$, $T$ and $r$.

Once we obtain the representation \eqref{e-3-6} and the above two estimates, the proof of the estimate \eqref{e-3-7}  will be analogous to  the proof of a similar estimate in Theorem 2.3 of \cite{HNS10}.
\end{proof}

\setcounter{equation}{0}

\section{Proof of Theorem \ref{t.3.1}}

\begin{proof}[Proof of Theorem \ref{t.3.1}] Part (a): The existence and uniqueness of the solution $(Y,Z)\in S_{\mathcal{F}}^q([0,T])\times H_{\mathcal{F}}^q([0,T)$ 
are  obtained in Theorem \ref{T.2.1}.  We first show that  $Y,\,Z\in \
\mathbb{L}_{a}^{1,2}$ following a recursive argument similar to that used in
 the proof of Proposition 5.3 in \cite{KPQ97}.
 
  Let $Y^0_t=0$ and $Z_t^0=0$ for all $t\in [0,T]$. It is obvious that $(Y^0,Z^0)$ is in $\mathbb{L}_{a}^{1,2}$. We define a sequence of $\{(Y^n,Z^n)\}_{n=0}^\infty$ as follows:
\[
Y_t^{n+1}=\xi+\int_t^Tf(r,Y_r^n,Z^n_r)dr+\int_t^Tg(Y_r^n)d\overleftarrow{B}_r-\int_t^TZ_r^{n+1}dW_r,\quad 0\leq t\leq T.
\]
In the proof of Theorem 1.1 in \cite{PP94}, the convergence of the sequence $\{(Y^n,Z^n)\}_{n=0}^\infty$ to $(Y,Z)$ in   the norm of  $S_{\mathcal{F}}^2([0,T])\times H_{\mathcal{F}}^2([0,T])$ is established.

If $(Y^n,Z^n)$ is in $\mathbb{L}_{a}^{1,2}$, then $\xi+\int_t^Tf(r,Y_r^n,Z^n_r)dr+\int_t^Tg(Y_r^n)d\overleftarrow{B}_r$ is in $\mathbb{D}^{1,2}$, and hence $$Y_{t}^{n+1}=\mathbb{E}\left(\left.\xi+\int_t^Tf(r,Y_r^n,Z^n_r)dr+\int_t^Tg(Y_r^n)d\overleftarrow{B}_r\right|\mathcal{G}_t\right)$$ is in $\mathbb{D}^{1,2}$, for all $t\in[0,T]$. Note that $D_\theta Y_t^{n+1}=0$ if $0\leq t<\theta\leq T$.
Since
 $$\xi+\int_t^Tf(r,Y_r^n,Z^n_r)dr+\int_t^Tg(Y_r^n)d\overleftarrow{B}_r-Y_t^{n+1}=\int_t^TZ_r^{n+1}dW_r,$$
 it follows from Lemma 5.1 in \cite{KPQ97} that $Z^{n+1}$ is in $L^{1,2}_a$, and furthermore, $D_\theta Z_t^{n+1}=0$ if $0\leq t<\theta\leq T$.
 
 Then, the Malliavin derivatives of $Y^{n+1}$ and $Z^{n+1}$ satisfy the following equation
 \begin{eqnarray*}
D_\theta Y^{n+1}_t &=&D_\theta \xi+\int_t^T[\partial_y{f(r,Y^n_r,Z^n_r)}
D_\theta
Y^n_r+\partial_z{f(r,Y^n_r,Z^n_r)}D_\theta Z^n_r +D_\theta f(r,Y^n_r,Z^n_r)]dr\nonumber\\
&&  +\int_t^Tg'(Y^n_r)D_\theta Y^n_rd\overleftarrow{B}_r-\int_t^TD_\theta
Z^{n+1}_rdW_r,\quad 0\le\theta\leq t\leq T\,; \\
D_\theta Y^{n+1}_t&=&0,\ D_\theta Z^{n+1}_t \ =\ 0,\quad 0\le t<\theta\leq
T.
\end{eqnarray*}
From the convergence of the sequence $\{(Y^n,Z^n)\}_{n=0}^\infty$ to $(Y,Z)$ and the Assumption  {\bf (A)}, by using  techniques  similar to those in the proof of Proposition 5.3 in \cite{KPQ97}, we can prove that the sequence $\{(D_\theta Y^n, D_\theta Z^n)\}_{n=0}^\infty$ converges to $(D_\theta Y, D_\theta Z)$, where $(D_\theta Y, D_\theta Z)$ is the unique solution to \eqref{e.3.12} and \eqref{e.3.12-2}. Note that
 $$\int_0^T(\Vert D_\theta Y\Vert_{S^2}^2+\Vert D_\theta Z\Vert_{H^2}^2)d\theta<\infty$$ follows from the estimate in Theorem \ref{t.3.1} with $q=2$. Hence, $(Y,Z)$ is in $\mathbb{L}_{a}^{1,2}$.

Furthermore, from conditions (\ref{e2}) and (\ref{e3}) and the
estimate in Theorem \ref{T.2.1}, we    obtain
\begin{equation}\label{e.3.19-1}
\sup_{0\le\theta\le T}\left\{\EE\sup_{\theta\le t\le T}|D_\theta
Y_t|^q+\EE\left(\int_\theta^T|D_\theta
Z_t|^2dt\right)^{\frac{q}{2}}\right\}<\infty.
 \end{equation}
Hence, by Proposition 1.5.5 in \cite{N06}, $Y$ and $Z$ belong to
$\mathbb{L}_a^{1,q}$. The representation \eqref{e.3.13} can be proved by using the same technique in the proof of Proposition 5.3 in \cite{KPQ97}.\\

\medskip
\noindent
Part (b): Let $0\le s\leq t\leq T$. In this proof, $C>0$ will be a
constant independent of $s$ and $t$,  and may vary from line to
line.

By the representation (\ref{e.3.13}) we have
\begin{equation}\label{e.3.18}
Z_t-Z_s=D_tY_t-D_sY_s=(D_tY_t-D_sY_t)+(D_sY_t-D_sY_s).
\end{equation}%
From Theorem \ref{T.2.1} and Equation (\ref{e.3.12}) for $\theta=s$
and $\theta'=t$ respectively, we obtain, using conditions
(\ref{e5}) and (\ref{e4}),
\begin{eqnarray}
&&\EE|D_tY_t-D_sY_t|^p+\EE\left(\int_t^T|D_tZ_r-D_sZ_r|^2dr\right)^{\frac{p}{2}}\notag\\
&\le&
C\left[\EE|D_t\xi-D_s\xi|^p+\EE\left(\int_t^T|D_tf(r,Y_r,Z_r)-D_sf(r,Y_r,Z_r)|^2dr\right)^{\frac{p}{2}}\right]\notag\\
&\le& C|t-s|^{\frac{p}{2}}\label{e.3.19}.
\end{eqnarray}
Denote $\alpha_u=\partial_y f(u,Y_u,Z_u)$, $\beta_u=\partial_z
f(u,Y_u,Z_u)$ and $\gamma_u=g'(Y_u)$ for all $u\in[0,T]$. Then, by Assumption  {\bf (A)}
$(ii)$ and $(iii)$, the processes $\alpha$, $\beta$ and $\gamma$ satisfy condition \textbf{(H1)}, and from (\ref{e.3.12}) we
have for $r\in[s,T]$
\begin{eqnarray*} 
&D_sY_r=D_s\xi+\int_r^T[\alpha_uD_sY_u+\beta_uD_sZ_u+D_sf(u,Y_u,Z_u)]du+\int_r^T\gamma_uD_sY_rd\overleftarrow{B}_r-\int_r^TD_sZ_udW_u.
\end{eqnarray*}

 Next, we are going to apply Theorem \ref{T-3-4} to the above linear BDSDE to estimate
$\EE|D_sY_t-D_sY_s|^p$. Fix $p^\prime$ with
$p<p^\prime<\frac{q}{2}$ (notice that $p^\prime<\frac{q}{2}$ is
equivalent to $\frac{p^\prime}{q-p^\prime}<1$). From conditions
(\ref{e2}) and (\ref{e3}), it is obvious that $D_s\xi\in
L^q(\Omega)\subset L^{p^\prime}(\Omega)$ and $D_s f(\cdot,Y,Z)\in
H^q_{\mathcal{F}}([0,T])\subset H^{p^\prime}_{\mathcal{F}}([0,T])$ for any $s\in [0,T]$.

Recall  that the random variable $\rho$ defined in \eqref{joint1}:
\[
\rho_r=\exp\left\{\int_0^r\beta_udW_u+\int_0^r\gamma_rd\overleftarrow{B}_r+\int_0^r\left(\alpha_u-\frac{1}{2}\beta_u^2-\frac{1}{2}\gamma_u^2\right)du\right\},
\]
 is $\mathcal{G}_r$-measurable.

For any $0\le\theta\le r\le T$, let us compute
\begin{eqnarray*}
D_\theta\rho_r&=&\rho_r\bigg\{\int_\theta^r[\partial_{yz}f(u,Y_u,Z_u)D_\theta
Y_u+\partial_{zz}f(u,Y_u,Z_u)D_\theta 
Z_u+D_\theta\partial_{z}f(u,Y_u,Z_u)]dW_u\\
&&+\partial_zf(\theta,Y_\theta,Z_\theta)+\int_\theta^r g''(Y_u)D_\theta Y_ud\overleftarrow{B}_u\\
&&+\int_\theta^r(\partial_{yy}f(u,Y_u,Z_u)-\partial_{yz}f(u,Y_u,Z_u)\beta_u+g'(Y_u)g''(Y_u))D_\theta
Y_udu\\
&&+\int_\theta^r(\partial_{yz}f(u,Y_u,Z_u)-\partial_{zz}f(u,Y_u,Z_u)\beta_u)D_\theta
Z_udu\\
&&+\int_\theta^r(D_\theta\partial_{y}f(u,Y_u,Z_u)-\beta_uD_\theta
\partial_{z}f(u,Y_u,Z_u))du\bigg\}.
\end{eqnarray*}

By the boundedness of the first and second order partial
derivatives of $f$ with respect to $y$ and $z$, the boundedness of $g'$ and $g''$, (\ref{e5-1}),
(\ref{e6}), (\ref{e.3.19-1}), Lemma \ref{L-5-1}, the H\"{o}lder
inequality and the Burkholder-Davis-Gundy inequality, it is easy to
show that for any $p''<q$,
\begin{equation}\label{e.3.21}
\sup_{0\le\theta\le T}\EE\sup_{\theta\le r\le
T}|D_\theta\rho_r|^{p''}<\infty.
\end{equation}
Then, the method to show $\rho_TD_s\xi\in M^{p^\prime}$ and $\int_s^T \rho_u D_sf(u,Y_u,Z_u)du\in M^{p^\prime}$ is exactly the same as that in the proof of Theorem 2.6 in \cite{HNS10}. Together with Theorem \ref{T-3-4}, we are able to show \begin{equation}\label{e.3.23}
\EE\vert D_sY_t-D_sY_s\vert^p\le C|t-s|^{\frac{p}{2}},
\end{equation}
for all $s,\, t\in[0,T]$. Combining  (\ref{e.3.23}) with (\ref{e.3.18}) and
(\ref{e.3.19}), we obtain that there is a constant $K>0$
independent of $s$ and $t$, such that,
\[
\EE\vert Z_t-Z_s\vert^p\le K\vert t-s\vert^{\frac{p}{2}},
\]
for all $s,\, t \in[0,T]$.  This completes the proof of  Theorem \ref{t.3.1}. 
\end{proof}

 \setcounter{equation}{0}

\section{Proof of Theorem \ref{t-3-10}}
We start with a comparison between the values of the process $Y$ and the approximation $Y^\pi$ at the partition points. 
It follows from \eqref{e.4.1} and \eqref{s-3-28} that, for $i=n-1, n-2,\dots, 1, 0$,
\begin{eqnarray*}
Y_{t_i}-Y_{t_i}^\pi&=&Y_{t_{i+1}}-Y_{t_{i+1}}^\pi+\int_{t_i}^{t_{i+1}}[f(r,Y_r,Z_r)-f(t_i,Y_{t_i}^\pi,Z_{t_i}^\pi)]dr   \\
&& +\int_{t_i}^{t_{i+1}}[g(Y_r)-g(Y_{t_{i+1}}^\pi)]d\overleftarrow{B}_r
-\int_{t_i}^{t_{i+1}}[Z_r-Z_r^{1,\pi}]dW_r\\
&=&Y_{t_{i+1}}-Y_{t_{i+1}}^\pi+[f(t_i,Y_{t_i},Z_{t_i})-f(t_i,Y_{t_i}^\pi,Z_{t_i}^\pi)]\Delta_i+[g(Y_{t_{i+1}})-g(Y_{t_{i+1}}^\pi)]\Delta B_i\\
&&+\int_{t_i}^{t_{i+1}}[f(r,Y_r,Z_r)-f(t_i,Y_{t_i},Z_{t_i})]dr+\int_{t_i}^{t_{i+1}}[g(Y_r)-g(Y_{t_{i+1}})]d\overleftarrow{B}_r \\
&& -\int_{t_i}^{t_{i+1}}[Z_r-Z_r^{1,\pi}]dW_r. 
\end{eqnarray*}
This can be written as 
\begin{eqnarray}
\nonumber
Y_{t_i}-Y_{t_i}^\pi &=&\xi-\xi^\pi+\sum_{j=i}^{n-1}[f(t_j,Y_{t_j},Z_{t_j})-f(t_j,Y_{t_j}^\pi,Z_{t_j}^\pi)]\Delta_j+\sum_{j=i}^{n-1}[g(Y_{t_{j+1}})-g(Y_{t_{j+1}}^\pi)]\Delta B_j\nonumber\\
&&+\sum_{j=i}^{n-1}\int_{t_j}^{t_{j+1}}[f(r,Y_r,Z_r)-f(t_j,Y_{t_j},Z_{t_j})]dr \nonumber\\  
&&+\sum_{j=i}^{n-1}\int_{t_j}^{t_{j+1}}[g(Y_r)  
-g(Y_{t_{j+1}})]d\overleftarrow{B}_r-\int_{t_i}^{T}[Z_r-Z_r^{1,\pi}]dW_r\nonumber\\
&=&\xi-\xi^\pi+\sum_{j=i}^{n-1}[f(t_j,Y_{t_j},Z_{t_j})-f(t_j,Y_{t_j}^\pi,Z_{t_j}^\pi)]\Delta_j+\sum_{j=i}^{n-1}[g(Y_{t_{j+1}})-g(Y_{t_{j+1}}^\pi)]\Delta B_j\nonumber\\
&&-\int_{t_i}^{T}[Z_r-Z_r^{1,\pi}]dW_r+R_{t_i}^\pi+G_{t_i}^\pi,  \label{eq7}
\end{eqnarray}
where
\begin{equation}\label{s-7-7}
R_{t_i}^\pi=\sum_{j=i}^{n-1}\int_{t_j}^{t_{j+1}}[f(r,Y_r,Z_r)-f(t_j,Y_{t_j},Z_{t_j})]dr,
\end{equation}
and
\begin{equation}\label{s-7-8}
G_{t_i}^\pi=\sum_{j=i}^{n-1}\int_{t_j}^{t_{j+1}}[g(Y_r)-g(Y_{t_{j+1}})]d\overleftarrow{B}_r.
\end{equation}

The following two lemmas will be needed in the proof of Theorem \ref{t-3-10}.

\begin{lemma}\label{l-7-1}
Let all the conditions in Theorem \ref{t-3-10} be satisfied, and let $R^\pi$ and $G^\pi$  be 
defined in \eqref{s-7-7} and \eqref{s-7-8} respectively. Then, the following estimates hold
\begin{eqnarray}
\mathbb{E}\max\limits_{0\leq i\leq n-1}\left( |R_{t_i}^\pi|^p+ |G_{t_i}^\pi|^p\right)&\leq& K|\pi|^{\frac{p}{2}},\label{s-7-9}\\
\mathbb{E}\max\limits_{0\leq i\leq n-1}\left(\mathbb{E}\left(\left.|R^\pi_{t_i}|\right|\mathcal{G}_{t_i}\right)\right)^p+\mathbb{E}\max\limits_{0\leq i\leq n-1}\left|\mathbb{E}\left(\left.G^\pi_{t_i}\right|\mathcal{G}_{t_i}\right)\right|^p&\leq & K|\pi|^{\frac{p}{2}},\label{s-7-9-1}
\end{eqnarray}
for some constant $K>0$.
\end{lemma}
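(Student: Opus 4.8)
The plan is to reduce all four estimates to the path‑regularity results already established: the $L^q$‑H\"older bound $\mathbb{E}|Y_t-Y_s|^q\le C|t-s|^{q/2}$ for $Y$ (Corollary \ref{l.3.7}, applicable thanks to Remark \ref{r.3.8}), the bound $\mathbb{E}|Z_t-Z_s|^p\le K|t-s|^{p/2}$ for $Z$ (Theorem \ref{t.3.1}(b)), together with the Lipschitz continuity of $f,g$ in $(y,z)$ and the temporal $\tfrac12$‑H\"older continuity \eqref{s-3-25} of $f$. For $r\in[0,T]$ write $j(r)$ for the index with $r\in[t_{j(r)},t_{j(r)+1})$. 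Since $R^\pi_{t_i}$ is a tail sum, one has the pathwise domination $\max_{0\le i\le n-1}|R^\pi_{t_i}|\le S$ and $\mathbb{E}(|R^\pi_{t_i}|\,|\,\mathcal{G}_{t_i})\le\mathbb{E}(S\,|\,\mathcal{G}_{t_i})$, where $S:=\int_0^T|f(r,Y_r,Z_r)-f(t_{j(r)},Y_{t_{j(r)}},Z_{t_{j(r)}})|dr$.

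First I would dispose of $R^\pi$. Splitting the time and space increments via \eqref{s-3-25} and the Lipschitz property of $f$ gives, pointwise in $r$,
\[
|f(r,Y_r,Z_r)-f(t_{j(r)},Y_{t_{j(r)}},Z_{t_{j(r)}})|\le L_1|\pi|^{\frac12}+L\big(|Y_r-Y_{t_{j(r)}}|+|Z_r-Z_{t_{j(r)}}|\big).
\]
By Jensen applied to the finite‑length integral, $\mathbb{E}S^p\le T^{p-1}\int_0^T\mathbb{E}|f(r,\cdot)-f(t_{j(r)},\cdot)|^p\,dr$, and the regularity of $Y$ (using $p<q$ to pass from the $L^q$‑bound to an $L^p$‑bound) and of $Z$ yield $\mathbb{E}S^p\le K|\pi|^{p/2}$, which is the $R$‑part of \eqref{s-7-9}. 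For \eqref{s-7-9-1}, note that $S$ is $\mathcal{G}_T$‑measurable and independent of $i$, so $\{\mathbb{E}(S\,|\,\mathcal{G}_{t_i})\}_i$ is a discrete $\mathcal{G}$‑martingale; Doob's $L^p$‑maximal inequality gives $\mathbb{E}\max_i\big(\mathbb{E}(S\,|\,\mathcal{G}_{t_i})\big)^p\le C_p\,\mathbb{E}S^p\le K|\pi|^{p/2}$, and the domination transfers this to $R^\pi$.

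Next, the unconditional bound for $G^\pi$. The map $t_i\mapsto G^\pi_{t_i}=\int_{t_i}^T[g(Y_r)-g(Y_{t_{j(r)+1}})]d\overleftarrow{B}_r$ is a backward It\^o integral, hence a martingale in the time‑reversed filtration; by the time‑reversed Doob and Burkholder–Davis–Gundy inequalities and the Lipschitz property of $g$,
\[
\mathbb{E}\max_i|G^\pi_{t_i}|^p\le C\,\mathbb{E}\Big(\int_0^T|g(Y_r)-g(Y_{t_{j(r)+1}})|^2dr\Big)^{\frac p2}\le CL^p\,\mathbb{E}\Big(\int_0^T|Y_r-Y_{t_{j(r)+1}}|^2dr\Big)^{\frac p2}.
\]
Since $p\ge2$, Jensen and the $Y$‑regularity bound the right‑hand side by $K|\pi|^{p/2}$, completing \eqref{s-7-9}.

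The conditional bound for $G^\pi$ in \eqref{s-7-9-1} is the main obstacle. Conditioning on $\mathcal{G}_{t_i}=\mathcal{F}^W_{t_i}\vee\mathcal{F}^B_{0,T}$ retains the \emph{entire} path of $B$, so, unlike the unconditional case, $\mathbb{E}(G^\pi_{t_i}\,|\,\mathcal{G}_{t_i})$ does not vanish and must be evaluated. I would write $\mathbb{E}(G^\pi_{t_i}\,|\,\mathcal{G}_{t_i})=\sum_{j\ge i}\mathbb{E}(D_j\,|\,\mathcal{G}_{t_i})$ with $D_j=\int_{t_j}^{t_{j+1}}[g(Y_r)-g(Y_{t_{j+1}})]d\overleftarrow{B}_r$, and expand $g(Y_r)-g(Y_{t_{j+1}})$ by the backward It\^o formula recalled in Section 2, using $dY_t=-f(t,Y_t,Z_t)dt-g(Y_t)d\overleftarrow{B}_t+Z_t\,dW_t$. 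This decomposes each $D_j$ into a $dr$‑term, a $dW$‑term, and a backward double integral $\int_{t_j}^{t_{j+1}}\big(\int_r^{t_{j+1}}g'(Y_s)g(Y_s)\,d\overleftarrow{B}_s\big)d\overleftarrow{B}_r$. The plan is to show that under $\mathbb{E}(\cdot\,|\,\mathcal{G}_{t_i})$ the $dW$‑term contributes nothing by the conditional martingale property in $W$, the $dr$‑term is of higher order $O(|\pi|)$ per step, and the backward double integral produces a quadratic‑covariation correction. The delicate point is that a term‑by‑term bound on this correction gives only the useless estimate $O(1)$ after summing over $j$; one must instead reorganize the corrections — subtracting their own $\mathcal{G}$‑conditional means to expose a martingale‑difference structure — so that Doob's inequality recovers the rate $|\pi|^{p/2}$. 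This cancellation, powered by the regularity of $Y$ and the boundedness of $g'$ and $g''$, is the crux; the remaining steps are routine applications of H\"older, Burkholder–Davis–Gundy and the established H\"older estimates for $Y$ and $Z$.
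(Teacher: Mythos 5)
Your treatment of $R^\pi$ (both the unconditional and the conditional bound) and of the unconditional bound for $G^\pi$ is correct and is essentially the paper's argument: reduce everything to $\mathbb{E}\int_0^T|f(r,Y_r,Z_r)-f(t_1(r),Y_{t_1(r)},Z_{t_1(r)})|^p\,dr$ and to $\mathbb{E}\big(\int_0^T|g(Y_r)-g(Y_{t_2(r)})|^2dr\big)^{p/2}$ via H\"older, Doob and Burkholder--Davis--Gundy, and then invoke \eqref{Hy} and \eqref{e-z} (here $t_1(r)$ and $t_2(r)$ denote the left and right endpoints of the partition interval containing $r$, your $t_{j(r)}$ and $t_{j(r)+1}$).

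The gap is in the fourth estimate, $\mathbb{E}\max_i|\mathbb{E}(G^\pi_{t_i}\mid\mathcal{G}_{t_i})|^p$, which you yourself single out as ``the main obstacle'' and then do not prove: your plan ends with an unexecuted ``reorganization of the corrections to expose a martingale-difference structure,'' which is exactly the step that would have to carry the estimate. Moreover, the obstacle is illusory, so the It\^o-expansion machinery you set up (expanding $g(Y_r)-g(Y_{t_{j+1}})$ by the backward It\^o formula, isolating a backward double integral, hunting for cancellations) is an unnecessary detour: one never needs to evaluate $\mathbb{E}(G^\pi_{t_i}\mid\mathcal{G}_{t_i})$. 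The paper's route is as follows. Since $Y_r$ is $\mathcal{F}_r$-measurable and $\mathcal{F}_r\subseteq\mathcal{G}_{t_i}$ for $r\le t_i$, while $B$ is entirely $\mathcal{G}_{t_i}$-measurable, the integral $\int_0^{t_i}[g(Y_r)-g(Y_{t_2(r)})]d\overleftarrow{B}_r$ is $\mathcal{G}_{t_i}$-measurable, whence
\[
\mathbb{E}\big(G^\pi_{t_i}\mid\mathcal{G}_{t_i}\big)=\mathbb{E}\Big(\int_0^T[g(Y_r)-g(Y_{t_2(r)})]d\overleftarrow{B}_r\;\Big|\;\mathcal{G}_{t_i}\Big)-\int_0^{t_i}[g(Y_r)-g(Y_{t_2(r)})]d\overleftarrow{B}_r;
\]
the maximum over $i$ of each term is then controlled by Doob's maximal inequality (the first is a $\mathcal{G}_{t_i}$-martingale in $i$; for the second write $\int_0^{t_i}=\int_0^T-\int_{t_i}^T$ and use that $\int_{t_i}^T$ is a backward martingale), and everything reduces to $\mathbb{E}\big|\int_0^T[g(Y_r)-g(Y_{t_2(r)})]d\overleftarrow{B}_r\big|^p\le C|\pi|^{p/2}$ by Burkholder--Davis--Gundy and \eqref{Hy}. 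Even more directly, conditional Jensen gives $|\mathbb{E}(G^\pi_{t_i}\mid\mathcal{G}_{t_i})|\le\mathbb{E}\big(\max_j|G^\pi_{t_j}|\,\big|\,\mathcal{G}_{t_i}\big)$, and Doob applied to this martingale reduces the conditional estimate to the unconditional bound \eqref{s-7-9} that you already established --- exactly the domination trick you used for $R^\pi$, with $\max_j|G^\pi_{t_j}|$ playing the role of your $S$. Either way, no cancellation argument is needed, and your proof as written is incomplete precisely at the point you identify as its crux.
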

\begin{proof} In this proof, let $C>0$ be a generic constant depending only on $T$, $p$ and   the constants appearing in the assumptions in Theorem \ref{t-3-10}.

 Define two functions $\{t_1(r)\}_{0\le r\le T}$ and $\{t_2(r)\}_{0\le r\le T}$ by
\begin{equation*}
t_1(r)=
 \begin{cases}
 0 &\text{if $r=0$,}
 \\
 t_{i} & \text{if $t_i< r\leq t_{i+1}$, $i=0,\dots, n-1$,}
\end{cases}
\end{equation*}
and
\begin{equation*}
t_2(r)=
 \begin{cases}
 T &\text{if $r=T$,}
 \\
 t_{j+1} & \text{if $t_j\le r<t_{j+1}$, $j=n-1,\dots,0$.}
\end{cases}
\end{equation*}

 From \eqref{s-7-7}, \eqref{s-3-25},  H\"{o}lder's inequality, the Lipschitz condition on $f$,  Corollary \ref{l.3.7} and Theorem \ref{t.3.1} (b), we obtain
\begin{eqnarray}\label{s-7-10}
\mathbb{E}\left[\max\limits_{0\leq i\leq n-1}|R_{t_i}^\pi|^p\right] &\leq&\mathbb{E}\sup\limits_{0\leq i\leq n-1}\left(\sum_{j=i}^{n-1}\int_{t_j}^{t_{j+1}}\left|f(r,Y_r,Z_r)-f(t_j,Y_{t_j},Z_{t_j})\right|dr\right)^p\nonumber\\
&=&\mathbb{E}\sup\limits_{0\leq i\leq n-1}\left(\int_{t_i}^{T}\left|f(r,Y_r,Z_r)-f(t_1(r),Y_{t_1(r)},Z_{t_1(r)})\right|dr\right)^p\nonumber\\
&\leq&\mathbb{E}\left(\int_{0}^{T}\left|f(r,Y_r,Z_r)-f(t_1(r),Y_{t_1(r)},Z_{t_1(r)})\right|dr\right)^p\nonumber\\
&\leq&T^{p-1}\mathbb{E}\int_0^T|f(r,Y_r,Z_r)-f(t_1(r),Y_{t_1(r)},Z_{t_1(r)})|^pdr\le C|\pi|^{\frac{p}{2}}.
\end{eqnarray}
By  Burkholder-Davis-Gundy's inequality,  condition $(iii)$ in Assumption  {\bf (A)}, Corollary \ref{l.3.7} and Theorem \ref{t.3.1}, we have
\begin{eqnarray}\label{s-7-11}
\mathbb{E}\max\limits_{0\leq i\leq n-1}|G_{t_i}^\pi|^p&=&\mathbb{E}\sup\limits_{0\leq i\leq n-1}\left|\int_{t_i}^T[g(Y_r)-g(Y_{t_2(r)})]d\overleftarrow{B}_r\right|^p
\nonumber \\
&\leq& C\mathbb{E}\left(\int_{0}^T\left|g(Y_r)-g(Y_{t_2(r)})\right|^2dr\right)^{\frac{p}{2}}\nonumber\\
&\leq&CT^{\frac{p-2}{2}}\mathbb{E}\int_0^T\left|g(Y_r)-g(Y_{t_2(r)})\right|^pdr\le C\pi^{\frac{p}{2}}.
\end{eqnarray}
Then, the  estimate \eqref{s-7-9} follows from \eqref{s-7-10} and \eqref{s-7-11}.

Let us now turn to the proof of \eqref{s-7-9-1}.
By Doob's maximal inequality, H\"{o}lder's inequality, \eqref{s-3-25}, the Lipschitz condition on $f$,  Corollary \ref{l.3.7} and Theorem \ref{t.3.1} (b), we get
\begin{eqnarray}\label{s-7-13-1}
&&\mathbb{E}\max\limits_{0\leq i\leq n-1}\left(\mathbb{E}\left(\left.|R^\pi_{t_i}|\right|\mathcal{G}_{t_i}\right)\right)^p\nonumber\\
&\leq&\mathbb{E}\left(\max\limits_{0\leq i\leq n-1}\mathbb{E}\left(\left.\int_{t_i}^T\left|f(r,Y_r,Z_r)-f(t_1(r),Y_{t_1(r)},Z_{t_1(r)})\right|dr\right|\mathcal{G}_{t_i}\right)\right)^p\nonumber\\
&\leq&\mathbb{E}\left(\max_{0\le i\le n-1}\mathbb{E}\left(\left.\int_{0}^T\left|f(r,Y_r,Z_r)-f(t_1(r),Y_{t_1(r)},Z_{t_1(r)})\right|dr\right|\mathcal{G}_{t_i}\right)\right)^p\nonumber\\
&\leq&C\mathbb{E}\left(\int_{0}^T\left|f(r,Y_r,Z_r)-f(t_1(r),Y_{t_1(r)},Z_{t_1(r)})\right|dr\right)^p\nonumber\\
&\leq&CT^{p-1}\mathbb{E}\int_0^T\left|f(r,Y_r,Z_r)-f(t_1(r),Y_{t_1(r)},Z_{t_1(r)})\right|^pdr\le C T^p|\pi|^{\frac{p}{2}}.
\end{eqnarray}
This proves the desired bound for the first summand in \eqref{s-7-9-1}. 
For the second  summand, we can write
\begin{eqnarray}   \nonumber
&&\mathbb{E}\max\limits_{0\leq i\leq n-1}\left|\mathbb{E}\left(\left.G^\pi_{t_i}\right|\mathcal{G}_{t_i}\right)\right|^p \\
&=&\mathbb{E}\max\limits_{0\leq i\leq n-1}\left|\mathbb{E}\left(\left.\int_{t_i}^T[g(Y_{r})-g(Y_{t_{2}(r)})]d\overleftarrow{B}_r\right|\mathcal{G}_{t_i}\right)\right|^p\nonumber\\
&=&\mathbb{E}\max\limits_{0\leq i\leq n-1}\left|\mathbb{E}\left(\left.\int_{0}^T[g(Y_{r})-g(Y_{t_{2}(r)})]d\overleftarrow{B}_r\right|\mathcal{G}_{t_i}\right)-\int_{0}^{t_i}[g(Y_{r})-g(Y_{t_{2}(r)})]d\overleftarrow{B}_r\right|^p\nonumber\\
&\leq&2^{p-1}\left[\mathbb{E}\max\limits_{0\leq i\leq n-1}\left|\mathbb{E}\left(\left.\int_{0}^T[g(Y_{r})-g(Y_{t_{2}(r)})]d\overleftarrow{B}_r\right|\mathcal{G}_{t_i}\right)\right|^p\right.\nonumber\\
&&\qquad\quad\left.+\mathbb{E}\max\limits_{0\leq i\leq n-1}\left|\int_{0}^{t_i}[g(Y_{r})-g(Y_{t_{2}(r)})]d\overleftarrow{B}_r\right|^p\right]=: 2^{p-1} \left[ A_1 + A_2 \right]. \label{eq89}
\end{eqnarray}
Using Doob's maximal inequality,  we obtain
\begin{equation}
A_1 \le  c_p  \mathbb{E}\left|\int_{0}^T[g(Y_{r})-g(Y_{t_{2}(r)})]d\overleftarrow{B}_r\right|^p \label{eq90},
\end{equation}
and
\begin{eqnarray}  \nonumber
A_2 & \le&  \mathbb{E}\max\limits_{0\leq i\leq n-1}\left|\int_{0}^{T}[g(Y_{r})-g(Y_{t_{2}(r)})]d\overleftarrow{B}_r-\int_{t_i}^T[g(Y_{r})-g(Y_{t_{2}(r)})]d\overleftarrow{B}_r\right|^p\\
&\le& c'_p \mathbb{E}\left|\int_{0}^T[g(Y_{r})-g(Y_{t_{2}(r)})]d\overleftarrow{B}_r\right|^p. \label{eq91}
\end{eqnarray}
Substituting \eqref{eq90} and \eqref{eq91} into \eqref{eq89} and applying
Burkholder-Davis-Gunday's inequality, H\"{o}lder's inequality, $(iii)$ in Assumption  {\bf (A)}, Corollary \ref{l.3.7} and Theorem \ref{t.3.1} (b),  we obtain
\begin{eqnarray}
 \mathbb{E}\max\limits_{0\leq i\leq n-1}\left|\mathbb{E}\left(\left.G^\pi_{t_i}\right|\mathcal{G}_{t_i}\right)\right|^p 
&\leq& C\mathbb{E}\left(\int_{0}^T|g(Y_{r})-g(Y_{t_{2}(r)})|^2dr\right)^{\frac{p}{2}}\nonumber\\
&\leq& CT^{\frac{p-2}{2}}\mathbb{E}\int_0^T|Y_{r}-Y_{t_{2}(r)}|^pdr\leq CT^{\frac{p}{2}}|\pi|^{\frac{p}{2}}. \label{s-7-14-1}
\end{eqnarray}
Therefore, \eqref{s-7-13-1} and \eqref{s-7-14-1} yield the desired inequality.
\end{proof}

\bigskip

Finally, let us give the proof of our main result on the rate of convergence of the  implicit numerical scheme.
\begin{proof}[Proof of Theorem \ref{t-3-10}]
In this proof, let $C>0$ be a generic constant depending only on $T$, $p$ and all the constants appearing in the assumptions in this theorem, and not depending on  the partition $\pi$.

Note  that both $Y_{t_i}$ and $Y_{t_i}^\pi$ are $\mathcal{F}_{t_i}\subseteq\mathcal{G}_{t_i}$-measurable for all $i=n, n-1,\dots, 0$ and that $Z_{t}^{1,\pi}$ is $\mathcal{G}_t$-measurable for all $t\in[0,T]$. By \eqref{eq7} we obtain, for $i=n-1,\dots, 1, 0$,
\begin{eqnarray}\label{s-7-12}
Y_{t_i}-Y_{t_i}^\pi&=&\mathbb{E}\left(\xi-\xi^\pi\big|\mathcal{G}_{t_i}\right)+\mathbb{E}\left(\left.\sum_{j=i}^{n-1}[f(t_j,Y_{t_j},Z_{t_j})-f(t_j,Y_{t_j}^\pi,Z_{t_j}^\pi)]\Delta_j\right|\mathcal{G}_{t_i}\right)\nonumber\\
&&\!\!\!+\mathbb{E}\left(\left.\sum_{j=i}^{n-1}[g(Y_{t_{j+1}})-g(Y_{t_{j+1}}^\pi)]\Delta B_j\right|\mathcal{G}_{t_i}\right)+\mathbb{E}\left(R^\pi_{t_i}\big|\mathcal{G}_{t_i}\right)+\mathbb{E}\left(G^\pi_{t_i}\big|\mathcal{G}_{t_i}\right).
\end{eqnarray}
To simplify the notation we denote, for $i=n, n-1,\dots, 0$,
\begin{eqnarray*}
\delta Y^\pi_{t_i}&=&Y_{t_i}-Y^\pi_{t_i},\qquad      \delta Z^\pi_{t_i}=Z_{t_i}-Z^\pi_{t_i}, \\
 \tilde{f}_{t_i}^\pi &=& f(t_i,Y_{t_i},Z_{t_i})-f(t_i,Y_{t_i}^\pi,Z_{t_i}^\pi),\quad  \tilde{g}_{t_i}^\pi=g(Y_{t_i})-g(Y^\pi_{t_i}),
\end{eqnarray*}
and 
\[
\delta Z^{1,\pi}_t = Z_t-Z^{1,\pi}_t, 
\]
for any $t\in [0,T]$.
By convention, $\delta Y_{t_n}^\pi=\xi-\xi^\pi$ and $\delta Z_{t_n}^\pi=0$. Then, we can rewrite \eqref{s-7-12} as
\begin{eqnarray}
\delta Y_{t_i}^\pi&=&\mathbb{E}\left(\xi-\xi^\pi\big|\mathcal{G}_{t_i}\right)+\mathbb{E}\left(\left.\sum_{j=i}^{n-1}\tilde{f}_{t_j}^\pi\Delta_j\right|\mathcal{G}_{t_i}\right)+\mathbb{E}\left(\left.\sum_{j=i}^{n-1}\tilde{g}^\pi_{t_{j+1}}\Delta B_j\right|\mathcal{G}_{t_i}\right) \nonumber \\
&& +\mathbb{E}\left(R^\pi_{t_i}\big|\mathcal{G}_{t_i}\right)+\mathbb{E}\left(G^\pi_{t_i}\big|\mathcal{G}_{t_i}\right).\nonumber\\
\end{eqnarray}
Thus, for $k=n-1,\dots, 0$,
\begin{eqnarray}
\max\limits_{k\leq i\leq n} |\delta Y_{t_i}^\pi|& \leq &\max\limits_{0\leq i\leq n}\mathbb{E}\left(|\xi-\xi^\pi|\big|\mathcal{G}_{t_i}\right)+\max\limits_{k\leq i\leq n}\mathbb{E}\left(\left.\sum_{j=k}^{n-1}|\tilde{f}_{t_j}^\pi|\Delta_j\right|\mathcal{G}_{t_i}\right) \nonumber \\
&& +\max\limits_{k\leq i\leq n}\mathbb{E}\left(\left.\sum_{j=i}^{n-1}\tilde{g}^\pi_{t_{j+1}}\Delta B_j\right|\mathcal{G}_{t_i}\right)
+\max\limits_{0\leq i\leq n}\mathbb{E}\left(|R^\pi_{t_j}|\big|\mathcal{G}_{t_i}\right) \nonumber \\
&& +\max\limits_{0\leq i\leq n}\mathbb{E}\left(G^\pi_{t_i}\big|\mathcal{G}_{t_i}\right).
\end{eqnarray}
Then, by Doob's maximal inequality, the Lipschitz condition on $f$, \eqref{e.3.25} and Lemma \ref{l-7-1}, we are able to show the following estimate: 
\begin{eqnarray}\label{s-7-18}
&&\mathbb{E}\max\limits_{k\leq i\leq n} |\delta Y_{t_i}^\pi|^p\nonumber\\
&\leq& C\left[\mathbb{E}|\xi-\xi^\pi|^p+|\pi|^{\frac{p}{2}}+\mathbb{E}\left|\sum_{j=k}^{n-1}|\tilde{f}_{t_j}^\pi|\Delta_j\right|^p+\mathbb{E}\max\limits_{k\leq i\leq n}\left|\mathbb{E}\left(\left.\sum_{j=i}^{n-1}\tilde{g}^\pi_{t_{j+1}}\Delta B_j\right|\mathcal{G}_{t_i}\right)\right|^p\right]\nonumber\\
&\leq&C\left[\mathbb{E}|\xi-\xi^\pi|^p+|\pi|^{\frac{p}{2}}+\mathbb{E}\left|\sum_{j=k}^{n-1}\left[|\delta Y_{t_j}^\pi|+|\delta Z_{t_j}^\pi|\right]\Delta_j\right|^p+\mathbb{E}\max\limits_{k\leq i\leq n}\left|\mathbb{E}\left(\left.\sum_{j=i}^{n-1}\tilde{g}^\pi_{t_{j+1}}\Delta B_j\right|\mathcal{G}_{t_i}\right)\right|^p\right]\nonumber\\
&\leq&C\left[\mathbb{E}|\xi-\xi^\pi|^p+|\pi|^{\frac{p}{2}}\right]+C(T-t_k)^p\mathbb{E}\max\limits_{k\leq i\leq n} |\delta Y_{t_i}^\pi|^p \nonumber \\
&&+C\mathbb{E}\left(\sum_{j=k}^{n-1}\left|Z_{t_j}\Delta_j-\mathbb{E}\left(\left.\int_{t_j}^{t_{j+1}}Z_r^{1,\pi}dr\right|\mathcal{F}_{t_j}\right)\right|\right)^p +C\mathbb{E}\max\limits_{k\leq i\leq n}\left|\mathbb{E}\left(\left.\sum_{j=i}^{n-1}\tilde{g}^\pi_{t_{j+1}}\Delta B_j\right|\mathcal{G}_{t_i}\right)\right|^p.\nonumber \\
\end{eqnarray}

Next, we will estimate the last two terms on the right-hand side of the above inequality. From the integral representation \eqref{e.3.23-s}, we know that $Z_r^{1,\pi}$ is independent of $\mathcal{F}_{0,t_j}^B$ for all $r\in[t_j,t_{j+1}],\ j=n-1,\dots, 0$. Thus, it holds that
\[
\mathbb{E}\left(\left.\int_{t_j}^{t_{j+1}}Z_r^{1,\pi}dr\right|\mathcal{F}_{t_j}\right)=\mathbb{E}\left(\left.\int_{t_j}^{t_{j+1}}Z_r^{1,\pi}dr\right|\mathcal{F}_{t_j}\vee \mathcal{F}_{0,t_j}^B\right)=\mathbb{E}\left(\left.\int_{t_j}^{t_{j+1}}Z_r^{1,\pi}dr\right|\mathcal{G}_{t_j}\right).
\]
Then, we get
\begin{eqnarray}
&&\mathbb{E}\left(\sum_{j=k}^{n-1}\left|Z_{t_j}\Delta_j-\mathbb{E}\left(\left.\int_{t_j}^{t_{j+1}}Z_r^{1,\pi}dr\right|\mathcal{F}_{t_j}\right)\right|\right)^p=\mathbb{E}\left(\sum_{j=k}^{n-1}\left|Z_{t_j}\Delta_j-\mathbb{E}\left(\left.\int_{t_j}^{t_{j+1}}Z_r^{1,\pi}dr\right|\mathcal{G}_{t_j}\right)\right|\right)^p\nonumber\\
& & \le 2^{p-1}\left[\mathbb{E}\left(\sum_{j=k}^{n-1}\int_{t_j}^{t_{j+1}}\mathbb{E}\left(\left.|Z_{t_j}-Z_r|\right|\mathcal{G}_{t_j}\right)dr\right)^p+\mathbb{E}\left(\sum_{j=k}^{n-1}\int_{t_j}^{t_{j+1}}\mathbb{E}\left(\left.|Z_{r}-Z_r^{1,\pi}|\right|\mathcal{G}_{t_j}\right)dr\right)^p\right]\nonumber\\
& &=:2^{p-1}[I_1+I_2].   \label{eq16}
\end{eqnarray}
H\"{o}lder's and Jessen's inequalities and \eqref{e-z} yield
\begin{eqnarray}
I_1&\leq& \mathbb{E}\left(\sum_{j=k}^{n-1}\Delta_j^{\frac{p-1}{p}}\left(\int_{t_j}^{t_{j+1}}\left(\mathbb{E}\left(|Z_{t_j}-Z_r|\big|\mathcal{G}_{t_j}\right)\right)^pdr\right)^{\frac{1}{p}}\right)^p\nonumber\\
&\leq & (T-t_k)^{p-1}\mathbb{E}\sum_{j=k}^{n-1}\int_{t_j}^{t_{j+1}}\left(\mathbb{E}\left(|Z_{t_j}-Z_r|\big|\mathcal{G}_{t_j}\right)\right)^pdr\nonumber\\
&\leq&(T-t_k)^{p-1}\mathbb{E}\sum_{j=k}^{n-1}\int_{t_j}^{t_{j+1}}\mathbb{E}\left(|Z_{t_j}-Z_r|^p\big|\mathcal{G}_{t_j}\right)dr\nonumber\\
&=&(T-t_k)^{p-1}\mathbb{E}\sum_{j=k}^{n-1}\int_{t_j}^{t_{j+1}}|Z_{t_j}-Z_r|^pdr\leq C|\pi|^{\frac{p}{2}}.   \label{eq17}
\end{eqnarray}
Applying Theorem 1.1 in \cite{G} and H\"{o}lder's inequality, we obtain
\begin{eqnarray}  \nonumber
I_2&\leq& C\mathbb{E}\left(\sum_{j=k}^{n-1}\int_{t_j}^{t_{j+1}}|Z_r-Z_r^{1,\pi}|dr\right)^p
=C\mathbb{E}\left(\int_{t_k}^T|Z_r-Z_r^{1,\pi}|dr\right)^p \\
&& \leq C(T-t_k)^{\frac{p}{2}}\mathbb{E}\left(\int_{t_k}^T|Z_r-Z_r^{1,\pi}|^2dr\right)^{\frac{p}{2}}.\label{s-7-21}
\end{eqnarray}
To estimate the last term on the right-hand side of \eqref{s-7-18}, adopting the notation $t_2(r)$ introduced in the proof of Lemma \ref{l-7-1}, 
we can write
\begin{eqnarray}  \label{eq11}
&&\mathbb{E}\max\limits_{k\leq i\leq n}\left|\mathbb{E}\left(\sum_{j=i}^{n-1}\tilde{g}^\pi_{t_{j+1}}\Delta B_j\Big |\mathcal{G}_{t_i}\right)\right|^p \nonumber\\
&=& \mathbb{E}\max\limits_{k\leq i\leq n-1}\left|\mathbb{E}\left(\sum_{j=i}^{n-1}[g(Y_{t_{j+1}})-g(Y_{t_{j+1}}^\pi)]\Delta B_j\Big|\mathcal{G}_{t_i}\right)\right|^p\nonumber\\ 
&=&\mathbb{E}\max\limits_{k\leq i\leq n-1}\left|\mathbb{E}\left(\int_{t_i}^T[g(Y_{t_2(r)})-g(Y_{t_{2}(r)}^\pi)]d\overleftarrow{B}_r\Big| \mathcal{G}_{t_i}\right)\right|^p\nonumber\\ 
&=&\mathbb{E}\max\limits_{k\leq i\leq n-1}\left|\mathbb{E}\left( \int_{t_k}^T[g(Y_{t_2(r)})-g(Y_{t_{2}(r)}^\pi)]d\overleftarrow{B}_r\Big |\mathcal{G}_{t_i}\right)-\int_{t_k}^{t_i}[g(Y_{t_2(r)})-g(Y_{t_{2}(r)}^\pi)]d\overleftarrow{B}_r\right|^p\nonumber\\ 
&\leq&2^{p-1}\left[\mathbb{E}\max\limits_{k\leq i\leq n-1}\left|\mathbb{E}\left(\left.\int_{t_k}^T[g(Y_{t_2(r)})-g(Y_{t_{2}(r)}^\pi)]d\overleftarrow{B}_r\right|\mathcal{G}_{t_i}\right)\right|^p\right.\nonumber\\
&&\qquad\qquad\left.+\mathbb{E}\max\limits_{k\leq i\leq n-1}\left(\int_{t_k}^{t_i}[g(Y_{t_2(r)})-g(Y_{t_{2}(r)}^\pi)]d\overleftarrow{B}_r\right)^p\right] = 2^{p-1} \left[  B_1 +B_2\right]. 
\end{eqnarray}
Using Doob's maximal inequality, we obtain
\begin{equation} \label{eq8}
B_1 \le  c_p\mathbb{E}\left|\int_{t_k}^T[g(Y_{t_2(r)})-g(Y_{t_{2}(r)}^\pi)]d\overleftarrow{B}_r\right|^p
\end{equation}
and
 \begin{eqnarray}
B_2&=& \mathbb{E}\max\limits_{k\leq i\leq n-1}\left|\int_{t_k}^{T}[g(Y_{t_2(r)})-g(Y_{t_{2}(r)}^\pi)]d\overleftarrow{B}_r-\int_{t_i}^T[g(Y_{t_2(r)})-g(Y_{t_{2}(r)}^\pi)]d\overleftarrow{B}_r\right|^p  \nonumber \\
&\le& c'_p \mathbb{E}\left|\int_{t_k}^T[g(Y_{t_2(r)})-g(Y_{t_{2}(r)}^\pi)]d\overleftarrow{B}_r\right|^p.  \label{eq9}
\end{eqnarray} 
Substituting \eqref{eq8}  and \eqref{eq9} into \eqref{eq11} and
applying Burkholder-Davis-Gundy's inequality, H\"{o}lder's inequality and $(iii)$ in Assumption  {\bf (A)}, we obtain
\begin{eqnarray}
&&\mathbb{E}\max\limits_{k\leq i\leq n}\left|\mathbb{E}\left(\sum_{j=i}^{n-1}\tilde{g}^\pi_{t_{j+1}}\Delta B_j\Big |\mathcal{G}_{t_i}\right)\right|^p\\
&\leq& C\mathbb{E}\left(\int_{t_k}^T|g(Y_{t_2(r)})-g(Y_{t_2(r)}^\pi)|^2dr\right)^{\frac{p}{2}}
\leq C(T-t_k)^{{ \frac{p}{2}}}\mathbb{E}\max\limits_{k\leq i\leq n-1}|\delta Y^\pi_{t_{i+1}}|^p\nonumber\\
&\leq& C(T-t_k)^{{\frac{p}{2}}}\mathbb{E}\max\limits_{k\leq i\leq n}|\delta Y^\pi_{t_{i}}|^p.  \label{s-7-22}
\end{eqnarray}
From \eqref{s-7-18}, \eqref{eq16}, \eqref{eq17}, \eqref{s-7-21} and \eqref{s-7-22}, it follows 
\begin{eqnarray}   \nonumber
\mathbb{E}\max\limits_{k\leq i\leq n} |\delta Y_{t_i}^\pi|^p  &\leq& C\left[\mathbb{E}|\xi-\xi^\pi|^p+|\pi|^{\frac{p}{2}}\right]+C(T-t_k)^{{ \frac{p}{2}}}\mathbb{E}\max\limits_{k\leq i\leq n} |\delta Y_{t_i}^\pi|^p  \\
&& +C(T-t_k)^{\frac{p}{2}}\mathbb{E}\left(\int_{t_k}^T|Z_r-Z_r^{1,\pi}|^2dr\right)^{\frac{p}{2}}. \label{s-7-23}
\end{eqnarray}
By  Burkholder-Davis-Gundy's inequality, we have
\begin{equation}\label{s-7-24}
\mathbb{E} \left( \int_{t_k}^{T}|Z_r- Z_r^{1,\pi} |^2 dr
\right)^{\frac{p}{2}}\le c_p \mathbb{E} \left\vert \int_{t_k}^T
(Z_r- Z_r^{1,\pi}) dW_r\right\vert^p\,.
\end{equation}
From \eqref{eq7}, we obtain
\begin{equation}\label{s-7-25}
\int_{t_k}^T (Z_r-Z_r^{1,\pi}) dW_r =-\delta Y_{t_k}^\pi+ \xi-\xi^\pi +\sum_{i=k}^{n-1}
\widetilde{f}_{t_i}^\pi {\Delta}_{i}+\sum_{i=k}^{n-1}\tilde{g}^\pi_{t_{i+1}}\Delta B_i+R_{t_k}^\pi+G_{t_k}^\pi.
\end{equation}
Then, it follows from \eqref{s-7-24}, \eqref{s-7-25},  Lemma \ref{l-7-1} and the arguments used in the proof of  \eqref{s-7-23},   that there exists a constant $C_1>0$ independent of  the partition $\pi$ such that
\begin{eqnarray*}
\mathbb{E} \left( \int_{t_k}^{T}|Z_r- Z_r^{1,\pi} |^2 dr
\right)^{\frac{p}{2}}
&\leq& C_1\left[\mathbb{E}|\xi-\xi^\pi|^p+|\pi|^{\frac{p}{2}}\right]+C_1(T-t_k)^{{ \frac{p}{2}}}\mathbb{E}\max\limits_{k\leq i\leq n} |\delta Y_{t_i}^\pi|^p\nonumber\\
&&+C_1(T-t_k)^{\frac{p}{2}}\mathbb{E}\left(\int_{t_k}^T|Z_r-Z_r^{1,\pi}|^2dr\right)^{\frac{p}{2}}.
\end{eqnarray*}
If $C_1(T-t_k)^{\frac{p}{2}}<\frac{1}{2}$, then we get
\begin{equation}\label{s-7-26}
\mathbb{E} \left( \int_{t_k}^{T}|Z_r- Z_r^{1,\pi} |^2 dr
\right)^{\frac{p}{2}}\leq 2C_1\left[\mathbb{E}|\xi-\xi^\pi|^p+|\pi|^{\frac{p}{2}}\right]+2C_1(T-t_k)^{{ \frac{p}{2}}}\mathbb{E}\max\limits_{k\leq i\leq n} |\delta Y_{t_i}^\pi|^p.
\end{equation}
Substituting \eqref{s-7-26} into \eqref{s-7-23}, we can find a constant $C_2>0$ independent of the partition $\pi$ such that
\begin{equation}\label{s-7-27}
\mathbb{E}\max\limits_{k\leq i\leq n} |\delta Y_{t_i}^\pi|^p\leq C_2\left[\mathbb{E}|\xi-\xi^\pi|^p+|\pi|^{\frac{p}{2}}\right]+C_2(T-t_k)^{{ \frac{p}{2}}}\mathbb{E}\max\limits_{k\leq i\leq n} |\delta Y_{t_i}^\pi|^p.
\end{equation}
Fix a positive constant $\delta$ independent of the partition $\pi$ such that
\begin{eqnarray}
C_1(3\delta)^{\frac{p}{2}}<\frac{1}{2}, \qquad 
C_2(3\delta)^{{ \frac{p}{2}}}<\frac{1}{2}, \qquad 
2\delta <T.\nonumber
\end{eqnarray}
Denote $l=\left[\frac{T}{2\delta}\right]$. Then $l\geq 1$ is an integer independent of the partition $\pi$. If $|\pi|<\delta$, then for the partition $\pi$ we choose $n-1>i_1>\cdots >i_l \geq 0$ such that $T-2\delta\in(t_{i_1-1},t_{i_1}]$, $T-4\delta\in(t_{i_2-1},t_{i_2}]$, $\dots$, $T-2\delta l\in[0,t_{i_l}]$ (with $t_{-1}=0$). For simplicity, we denote $t_{i_0}=T$ and $t_{i_{l+1}}=0$. Each interval $[t_{i_{j+1}},t_{i_j}]$, $j=0, 1, \dots, l$, has length less than $3\delta$, that is, $|t_{i_j}-t_{i_{j+1}}|<3\delta$.

On each interval $[t_{i_{j+1}},t_{i_j}]$, $j=0, 1, \dots, l$, we carry out the same analysis as that in \eqref{s-7-18}-\eqref{s-7-27}, and in this way we obtain
\begin{eqnarray}  
\mathbb{E} \left( \int_{t_{i_{j+1}}}^{t_{i_j}}|Z_r- Z_r^{1,\pi} |^2 dr
\right)^{\frac{p}{2}} \le 2C_1\mathbb{E}|\delta Y_{t_{i_j}}^\pi|^p+2C_1|\pi|^{\frac{p}{2}}  +2C_1(t_{i_j}-t_{i_{j+1}})^{{ \frac{p}{2}}}\mathbb{E}\max\limits_{i_{j+1}\leq i\leq i_j}|\delta Y_{t_i}^\pi|^p \nonumber\\
\label{s-7-28}
\end{eqnarray}
and
\begin{eqnarray*}
\mathbb{E}\max\limits_{i_{j+1}\leq i\leq i_j}|\delta Y_{t_i}^\pi|^p&\leq& C_2\left[\mathbb{E}|\delta Y_{t_{i_j}}^\pi|^p+|\pi|^{\frac{p}{2}}\right]+C_2(t_{i_j}-t_{i_{j+1}})^{{ \frac{p}{2}}}\mathbb{E}\max\limits_{i_{j+1}\leq i\leq i_j}|\delta Y_{t_i}^\pi|^p\nonumber\\
&\leq&C_2\left[\mathbb{E}|\delta Y_{t_{i_j}}^\pi|^p+|\pi|^{\frac{p}{2}}\right]+\frac{1}{2}\mathbb{E}\max\limits_{i_{j+1}\leq i\leq i_j}|\delta Y_{t_i}^\pi|^p.
\end{eqnarray*}
Hence,
\begin{eqnarray*}
\mathbb{E}\max\limits_{i_{j+1}\leq i\leq i_j}|\delta Y_{t_i}^\pi|^p\leq 2C_2\left[\mathbb{E}|\delta Y_{t_{i_j}}^\pi|^p+|\pi|^{\frac{p}{2}}\right].
\end{eqnarray*}
By recurrence, we have
\begin{eqnarray*}
\mathbb{E}\max\limits_{i_{j+1}\leq i\leq i_j}|\delta Y_{t_i}^\pi|^p&\leq&(2C_2)^{j+1}\mathbb{E}|\xi-\xi^\pi|^p+2C_2[1+2C_2+\cdots+(2C_2)^j]|\pi|^{\frac{p}{2}}\nonumber\\
&\leq& (2C_2)^{l+1}\mathbb{E}|\xi-\xi^\pi|^p+\frac{2C_2(1-(2C_2)^{l+1})}{1-2C_2}|\pi|^{\frac{p}{2}}.
\end{eqnarray*}
Thus, taking $K_1=(l+1)^{p}\max\left\{(2C_2)^{l+1},\frac{2C_2(1-(2C_2)^{l+1})}{1-2C_2}\right\}$, we have the following estimate
\begin{equation}\label{s-7-29}
\mathbb{E}\max\limits_{0\leq i\leq n}|\delta Y_{t_i}^\pi|^p\leq (l+1)^{p-1}\sum_{j=0}^{l}\mathbb{E}\max\limits_{i_{j+1}\leq i\leq i_j}|\delta Y_{t_i}^\pi|^p\leq K_1\left[\mathbb{E}|\xi-\xi^\pi|^p+|\pi|^{\frac{p}{2}}\right].
\end{equation}
Plugging \eqref{s-7-29} into \eqref{s-7-28}, we obtain
\begin{equation*}
\mathbb{E} \left( \int_{t_{i_{j+1}}}^{t_{i_j}}|Z_r- Z_r^{1,\pi} |^2 dr
\right)^{\frac{p}{2}}\leq 2C_1|\pi|^{\frac{p}{2}}+2C_1K_1(1+T^{{ \frac{p}{2}}})\left[\mathbb{E}|\xi-\xi^\pi|^p+|\pi|^{\frac{p}{2}}\right].
\end{equation*}
Then, by taking $K_2=(l+1)^{\frac{p}{2}}(2C_1+2C_1K_1(1+T^{{ \frac{p}{2}}}))$, we have
\begin{eqnarray}\label{s-7-30}
\mathbb{E} \left( \int_{0}^{T}|Z_r- Z_r^{1,\pi} |^2 dr
\right)^{\frac{p}{2}}&\leq& (l+1)^{\frac{p-2}{2}}\sum_{i=0}^{l}\mathbb{E} \left( \int_{t_{i_{j+1}}}^{t_{i_j}}|Z_r- Z_r^{1,\pi} |^2 dr
\right)^{\frac{p}{2}}\nonumber\\
&\leq&(l+1)^{\frac{p}{2}}\left(2C_1|\pi|^{\frac{p}{2}}+2C_1K_1(1+T^{{ \frac{p}{2}}})\left[\mathbb{E}|\xi-\xi^\pi|^p+|\pi|^{\frac{p}{2}}\right]\right)\nonumber\\
&\leq&K_2\left[\mathbb{E}|\xi-\xi^\pi|^p+|\pi|^{\frac{p}{2}}\right].
\end{eqnarray}
Therefore, by taking $K=K_1+K_2$ and adding \eqref{s-7-29} and \eqref{s-7-30}, we deduce \eqref{s-3-34-1}.
\end{proof}

\setcounter{equation}{0}

\end{document}